\theoremstyle{plain}
\newtheorem{theorem}{Theorem}[section]
\newtheorem{proposition}[theorem]{Proposition}
\newtheorem{lemma}[theorem]{Lemma}
\newtheorem{corollary}[theorem]{Corollary}
\theoremstyle{definition}
\newtheorem{definition}[theorem]{Definition}
\theoremstyle{remark}
\newtheorem{remark}[theorem]{Remark}
\newtheorem{example}[theorem]{Example}
\newcommand{\ovl}{\overline}
\newcommand{\Kern}{\mathrm{Ker}}
\newcommand{\cone}{\mathrm{cone}}
\renewcommand{\lim}{\mathrm{lim}}
\newcommand{\Spec}{\mathrm{Spec}}
\newcommand{\Ext}{\mathrm{Ext}}
\newcommand{\Hom}{\mathrm{Hom}}
\newcommand{\RHom}{\mathrm{RHom}}
\newcommand{\op}{^{\mathrm{op}}}
\newcommand{\Ob}{\mathrm{Ob}}
\newcommand{\Z}{\mathbb{Z}}
\newcommand{\NNN}{\mathfrak{n}}
\newcommand{\PPP}{\mathfrak{p}}
\newcommand{\GGGG}{\mathfrak{g}}
\newcommand{\CC}{\mathbf{C}}
\newcommand{\Mod}{\ensuremath{\mathsf{Mod}} }
\newcommand{\mmod}{\ensuremath{\mathsf{mod}} }
\newcommand{\Rng}{\ensuremath{\mathsf{Rng}} }
\newcommand{\Qch}{\ensuremath{\mathsf{Qch}} }
\newcommand{\Cat}{\ensuremath{\mathsf{Cat}} }
\newcommand{\Ind}{\ensuremath{\mathsf{Ind}}}
\newcommand{\Pro}{\ensuremath{\mathsf{Pro}}}
\newcommand{\Des}{\ensuremath{\mathrm{Des}}}
\newcommand{\lra}{\longrightarrow}
\newcommand{\aaa}{\ensuremath{\mathcal{A}}}
\newcommand{\ccc}{\ensuremath{\mathcal{C}}}
\newcommand{\ddd}{\ensuremath{\mathcal{D}}}
\newcommand{\eee}{\ensuremath{\mathcal{E}}}
\newcommand{\iii}{\ensuremath{\mathcal{I}}}
\newcommand{\LLL}{\ensuremath{\mathcal{L}}}
\newcommand{\ooo}{\ensuremath{\mathcal{O}}}
\newcommand{\sss}{\ensuremath{\mathcal{S}}}
\newcommand{\ttt}{\ensuremath{\mathcal{T}}}
\title{On compact generation of deformed schemes}
\author{Wendy Lowen} 
\address[Wendy Lowen]{Departement Wiskunde-Informatica, Universiteit Antwerpen, Middelheimcampus,
Middelheimlaan 1,
2020 Antwerp, Belgium}
\email{wendy.lowen@ua.ac.be}
\author{Michel Van den Bergh} 
\address[Michel Van den Bergh]{Departement WNI, Universiteit Hasselt,
3590 Diepenbeek, Belgium}
\email{michel.vandenbergh@uhasselt.be}
\thanks{The first author acknowledges the support of the European Union for the ERC grant No 257004-HHNcdMir}
\thanks{The second author is a director of research at the Fund for Scientific Research, Flanders}
\begin{document}
\maketitle

\begin{abstract}
We obtain a theorem which allows to prove compact generation of derived categories of Grothendieck categories, based upon certain coverings by localizations. This theorem follows from an application of Rouquier's cocovering theorem in the triangulated context, and it implies Neeman's result on compact generation of quasi-compact separated schemes. We prove an application of our theorem to non-commutative deformations of such schemes, based upon a change from Koszul complexes to Chevalley-Eilenberg complexes.
\end{abstract}

\section{Introduction}

Compact generation of triangulated categories was introduced by Neeman in \cite{neeman2}. One of the motivating situations is given by derived categories of ``nice'' schemes (i.e. quasi-compact separated schemes in \cite{neeman2}, later extended to quasi-compact quasi-separated schemes by Bondal and Van den Bergh 
in \cite{bondalvandenbergh}). The ideas of the proofs later cristalized in Rouquier's (co)covering theorem \cite{rouquier} which describes a certain covering-by-Bousfield-localizations situation in which compact generation (later extended to $\alpha$-compact generation 
by Murfet
in \cite{murfet2}) of a number of ``smaller pieces'' entails compact generation of the whole triangulated category. The notions needed in the (co)covering concept can be interpreted as categorical versions of standard scheme constructions like unions and intersections of open subsets, and in the setup of Grothendieck categories rather than triangulated categories they have been important in non-commutative algebraic geometry (see eg. \cite{vanoystaeyen}, \cite{vanoystaeyenverschoren}, \cite{rosenberg}, \cite{smith}). In this paper we apply Rouquier's theorem in order to obtain a (co)covering theorem for Grothendieck categories based upon these notions, which can be used to prove compact generation of derived categories of Grothendieck categories (see Theorem \ref{cocovergroth} in the paper). 

\begin{theorem}\label{cocovergrothintro}
Let $\ccc$ be a Grothendieck category with a compatible covering of affine localizing subcategories $\sss_i \subseteq \ccc$ for $i \in I = \{1, \dots, n\}$. Suppose:
\begin{enumerate}
\item $D(\ccc/\sss_i)$ is compactly generated for every $i \in I$.
\item For every $i \in I$ and $\varnothing \neq J \subseteq I \setminus \{i\}$, suppose the essential image $\eee$  of
$$\cap_{j \in J} \sss_j \lra \ccc \lra \ccc/ \sss_i$$
is such that $D_{\eee}(\ccc/\sss_i)$ is compactly generated in $D(\ccc/ \sss_i)$.
\end{enumerate}
Then $D(\ccc)$ is compactly generated, and an object in $D(\ccc)$ is compact if and only if its image in every $D(\ccc/ \sss_i)$ for $i \in I$ is compact.
\end{theorem}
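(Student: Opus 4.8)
The plan is to realise $D(\ccc)$, together with the localizing subcategories attached to the $\sss_i$, as a cocovering situation in the sense of Rouquier \cite{rouquier} (in the form extended by Murfet \cite{murfet2} to well generated categories) and then to invoke the corresponding compact generation theorem.

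The first step is to pass from the abelian to the triangulated level. For a localizing subcategory $\sss\subseteq\ccc$ let $D_\sss(\ccc)\subseteq D(\ccc)$ be the full subcategory of complexes with cohomology in $\sss$; this is a localizing subcategory, so (the derived category of a Grothendieck category being well generated) Brown representability provides a Bousfield localization $D(\ccc)\to D(\ccc)/D_\sss(\ccc)$. The key input — this is where affineness of the covering is used, guaranteeing that the section functor of the embedding $\ccc/\sss_i\hookrightarrow\ccc$ is exact, or at least of bounded cohomological dimension — is the identification $D(\ccc)/D_{\sss_i}(\ccc)\simeq D(\ccc/\sss_i)$, compatibly with the localization functors. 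Under it, hypothesis (1) says exactly that each $D(\ccc)/D_{\sss_i}(\ccc)$ is compactly generated.

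Now set $\ttt=D(\ccc)$ and $\ttt_i=D_{\sss_i}(\ccc)$. Since $\{\sss_i\}_{i\in I}$ is a covering we have $\bigcap_{i\in I}\sss_i=0$, hence $\bigcap_{i\in I}\ttt_i=0$ by inspection of cohomology; and compatibility of the covering is inherited by the $\ttt_i$ (the associated Bousfield localizations commuting appropriately), so that $\{\ttt_i\}_{i\in I}$ is a legitimate cocovering datum. The one remaining point is the inductive hypothesis of the cocovering theorem: for every $i\in I$ and every $\varnothing\neq J\subseteq I\setminus\{i\}$, the image of $\bigcap_{j\in J}\ttt_j$ under $D(\ccc)\to D(\ccc)/D_{\sss_i}(\ccc)=D(\ccc/\sss_i)$ must be a compactly generated localizing subcategory of $D(\ccc/\sss_i)$. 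Here one identifies $\bigcap_{j\in J}\ttt_j=D_{\cap_{j\in J}\sss_j}(\ccc)$ and then checks that its image is the localizing subcategory $D_\eee(\ccc/\sss_i)$ determined by the essential image $\eee$ of $\bigcap_{j\in J}\sss_j$ in $\ccc/\sss_i$; hypothesis (2) supplies precisely the required compact generation. With these verifications in hand Rouquier's cocovering theorem applies and yields both that $\ttt=D(\ccc)$ is compactly generated and that an object of $\ttt$ is compact exactly when its image in each localization $\ttt/\ttt_i=D(\ccc/\sss_i)$ is compact — the ``only if'' resting on the fact that these Bousfield localizations turn out to be smashing, which is part of what the machine establishes, and the ``if'' being the descent statement at the heart of the theorem.

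The main obstacle is the identification used in the induction step: that the image of the derived intersection $\bigcap_{j\in J}D_{\sss_j}(\ccc)$ in $D(\ccc/\sss_i)$ is $D_\eee(\ccc/\sss_i)$. Establishing it means interchanging three operations — forming derived categories, passing to the localizing subcategory generated by an abelian subcategory, and applying the derived localization $D(\ccc)\to D(\ccc/\sss_i)$ — which is delicate because the last of these is not computed naively, and because one must simultaneously check that the covering of $\ccc/\sss_i$ by the images of the $\sss_j$ is again affine and compatible, so that the cocovering theorem can be applied recursively over the smaller index sets $J$. This compatibility of the covering with passage to $\ccc/\sss_i$ is essentially a combinatorial statement about the associated localizations, parallel to the fact that an open subscheme of a quasi-compact separated scheme is again quasi-compact separated; once it is set up the induction runs and the theorem follows. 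Neeman's theorem is recovered as the special case $\ccc=\Qch(X)$ for $X$ quasi-compact separated, with $\sss_i$ the subcategory of quasi-coherent sheaves vanishing on a member $U_i$ of a finite affine cover.
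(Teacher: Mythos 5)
Your proposal follows essentially the same route as the paper: transfer the covering to the triangulated level via $D_{\sss_i}(\ccc)$, check that the Bousfield localizations properly intersect, identify the images of the derived intersections as $D_\eee(\ccc/\sss_i)$, and invoke the Rouquier--Murfet cocovering theorem. One small correction regarding where the affineness hypothesis enters: the identification $D(\ccc)/D_{\sss_i}(\ccc)\simeq D(\ccc/\sss_i)$ (with $La_i$ left adjoint to a fully faithful $Ri_i$ and $\Kern(La_i)=D_{\sss_i}(\ccc)$) holds for \emph{any} Grothendieck localization, with no exactness assumption on $i_i$; the affineness is needed instead to pass compatibility from the abelian to the triangulated level, namely to show $Ri_1a_1(D_{\sss_2}(\ccc))\subseteq D_{\sss_2}(\ccc)$ and vice versa — this is the paper's Proposition \ref{compint} via Lemma \ref{lemkey}, where exactness of $i_k$ lets one compute $Ri_ka_k$ on the nose. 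Also, hypothesis (2) of the cocovering theorem is used once as a black box; there is no need to re-apply the cocovering theorem recursively on the smaller index sets $J$, so the concern about re-establishing affineness and compatibility on $\ccc/\sss_i$ is not actually part of the argument.
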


When applied to the category of quasi-coherent sheaves over a quasi-compact separated scheme, the theorem implies Neeman's original result.

Our interest in the intermediate Theorem \ref{cocovergrothintro} comes from its applicability to Gro\-then\-dieck categories that originate as ``non-commutative deformations'' of schemes, more precisely abelian deformations of categories of quasi-coherent sheaves in the sense of \cite{lowenvandenbergh1}. 
After formulating a general result for deformations (Theorem \ref{thmdefcomp}), based upon lifting compact generators under deformation, we specialize further to the scheme case in Theorem \ref{thmscheme}. We use the description of deformations from \cite{lowen4} using non-commutative twisted presheaf deformations of the structure sheaf on an affine open cover.

When all involved deformed rings are commutative, using liftability of Koszul complexes under deformation, the corresponding twisted deformations are seen to be compactly generated, a fact which also follows from \cite{toen2}. 
In our main Theorem \ref{mainintro} (see Theorem \ref{maintheorem} in the paper), we show that actually all non-commutative deformations are compactly generated.

\begin{theorem} \label{mainintro} Let $X$ be a quasi-compact separated
  $k$-scheme. Then every flat deformation of the abelian category $\Qch(X)$  over a finite dimensional commutative local $k$-algebra has a compactly generated derived category.
\end{theorem}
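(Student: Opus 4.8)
The plan is to obtain the statement from Theorem \ref{thmscheme}; checking its hypotheses amounts, via Theorem \ref{cocovergrothintro}, to lifting under the deformation the perfect complexes that witness the cocovering conditions on an affine cover. Since $X$ is quasi-compact and separated, fix a finite affine open cover $\uuu = (U_i)_{i \in I}$ with all intersections $U_i \cap U_j$ affine, and write $A_i = \ooo_X(U_i)$. By the description of abelian deformations of $\Qch(X)$ in terms of non-commutative twisted presheaf deformations of $\ooo_X$ on $\uuu$ from \cite{lowen4}, a flat deformation $\ccc$ of $\Qch(X)$ over a finite-dimensional commutative local $k$-algebra with maximal ideal $\mathfrak{m}$ carries a compatible covering by affine localizing subcategories $\sss_i \subseteq \ccc$ (deforming the quasi-coherent sheaves supported off $U_i$), with $\ccc/\sss_i \simeq \Mod(R_i)$ for $R_i$ a flat, in general \emph{non-commutative}, deformation of $A_i$. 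Condition (1) of Theorem \ref{cocovergrothintro} is then automatic, $D(\Mod R_i)$ being compactly generated by the free module $R_i$, so everything is concentrated in condition (2).

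Unwinding condition (2): for $i \in I$ and $\varnothing \neq J \subseteq I \setminus \{i\}$, the essential image $\eee$ of $\bigcap_{j \in J}\sss_j \to \ccc \to \ccc/\sss_i$ consists of the $R_i$-modules ``supported on'' the closed subset $Z = U_i \setminus \bigcup_{j \in J}(U_i \cap U_j)$, i.e. killed by the deformed localizations inverting lifts $\tilde g_\alpha$ of a finite family $g_1,\dots,g_r \in A_i$ with $\bigcup_{j\in J}(U_i \cap U_j) = \bigcup_{\alpha=1}^r D(g_\alpha)$ (finitely many, by separatedness and quasi-compactness). On the special fibre, $D_\eee(\Mod A_i) = D_Z(\Mod A_i)$ is compactly generated by the Koszul complex $K(g_1,\dots,g_r)$, a perfect complex. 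In the commutative case one lifts the $g_\alpha$ to a \emph{commuting} family in $R_i$, getting a lift of $K(g_\bullet)$ for free, which already gives compact generation via Theorem \ref{thmscheme} (compare \cite{toen2}); the difficulty is that for a genuinely non-commutative $R_i$ the lifts $\tilde g_\alpha$ need not commute, the commutators $[\tilde g_\alpha, \tilde g_\beta] \in \mathfrak{m}R_i$ obstruct $d^2 = 0$, and the naive Koszul complex over $R_i$ simply does not exist.

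The remedy --- the source of the ``change from Koszul complexes to Chevalley--Eilenberg complexes'' --- is to replace $K(g_1,\dots,g_r)$ by a Chevalley--Eilenberg-type complex $P_{i,J}$ of finite-rank free $R_i$-modules: one realises $R_i$ as an Artinian truncation of a curved, Lie-theoretic deformation, and equips $\bigwedge^\bullet$ of a finite free module with a differential whose higher (curvature) terms absorb the failure of the $\tilde g_\alpha$ to commute, arranged so that $d^2 = 0$, so that $P_{i,J}$ is perfect, and so that $P_{i,J} \otimes_{R_i} A_i \cong K(g_1,\dots,g_r)$, i.e. $P_{i,J}$ reduces modulo $\mathfrak{m}$ to the classical Koszul complex. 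Granting such $P_{i,J}$, a d\'evissage along the finite filtration of $R_i$ by powers of $\mathfrak{m}$, using flatness of the deformation, shows both that $P_{i,J} \in D_\eee(\Mod R_i)$ and that it generates $D_\eee(\Mod R_i)$ as a localizing subcategory: modulo $\mathfrak{m}$ it generates $D_Z(\Mod A_i)$ by Neeman's affine computation, and nilpotence of $\mathfrak{m}$ propagates this upward (this is exactly the ``lifting of compact generators under deformation'' packaged in Theorems \ref{thmdefcomp} and \ref{thmscheme}, so that producing the $P_{i,J}$ is all that is needed). Hence condition (2) holds, Theorem \ref{cocovergrothintro} applies, $D(\ccc)$ is compactly generated, and since $\ccc$ was arbitrary the theorem follows.

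The main obstacle is the construction and control of the complexes $P_{i,J}$: producing, for an arbitrary (not merely Poisson or formal) finite-order non-commutative deformation $R_i$ of the affine ring $A_i$, a perfect $R_i$-complex that squares to zero in spite of the non-commuting lifts, that specializes to the Koszul complex over $A_i$, and whose generated localizing subcategory is precisely the supported subcategory $\eee$ --- and doing so compatibly enough with the twisted presheaf structure on overlaps to be admissible input for the cocovering machinery.
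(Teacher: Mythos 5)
Your high-level plan is the right one: deduce the statement from Theorem \ref{thmscheme}, which reduces matters to producing, for each $i$ and each non-empty $J\subseteq I\setminus\{i\}$, a perfect complex over the deformed ring $R_i$ whose image under $k\otimes^L_R-$ compactly generates the supported subcategory $D_{Z^i_J}(U_i)$ inside $D(U_i)$; and the Chevalley--Eilenberg complex is indeed the tool. But your formulation of the key step contains a genuine error. You require the lifted complex $P_{i,J}$ to satisfy $P_{i,J}\otimes_{R_i}A_i\cong K(g_1,\dots,g_r)$, i.e.\ to reduce modulo $\mathfrak{m}$ to the Koszul complex itself. That is precisely what cannot be arranged in general: the obstruction to lifting $K(g)$ to an honest complex of free modules over the deformed ring is a class in $\Ext^2_{A_i}(K(g),\mathfrak{m}\otimes_R K(g))$ which is typically nonzero -- this is exactly why the Koszul precomplex $K(\tilde g)$ fails $d^2=0$ -- and no choice of ``curvature corrections'' to the differential can produce a free $R_i$-complex with reduction $K(g)$ when this class is nontrivial. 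The content of Theorem \ref{thmlift} and Proposition \ref{propaux} is the opposite move: one first \emph{replaces} $K(g)$, already over $A_i$, by the strictly larger perfect complex $X_d=A_i\otimes_k\Lambda^\bullet\mathfrak{n}$, where $\mathfrak{n}$ is the free Lie algebra on $x_1,\dots,x_r$ with all expressions of bracket length $\ge d$ set to zero; one proves that $X_d$ generates the same thick and the same localizing subcategory of $D(A_i)$ as $K(g)$; and only then does one lift $X_d$ -- not $K(g)$. That lift is automatic and unobstructed: $U(\mathfrak{n})$ is non-commutative, so any lifts $\tilde g_\alpha\in R_i$ define a $k$-algebra map $U(\mathfrak{n})\to R_i$, $x_\alpha\mapsto\tilde g_\alpha$, and $X'=R_i\otimes_k\Lambda^\bullet\mathfrak{n}$ with the ordinary Chevalley--Eilenberg differential is a perfect $R_i$-complex with $A_i\otimes^L_{R_i}X'\cong X_d$ (not $\cong K(g)$).

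Two smaller misconceptions are worth flagging. The differential on $X'$ is an honest, non-curved Chevalley--Eilenberg differential of an actual nilpotent Lie algebra, with $d^2=0$ on the nose: the non-commuting $[\tilde g_\alpha,\tilde g_\beta]$ are handled by adjoining new exterior generators recording iterated brackets, not by inserting higher-order correction terms into a failing $d$. And $\mathfrak{n}$, $U(\mathfrak{n})$ and the replacement $X_d$ are universal, depending only on the nilpotency exponent $d$ of $\mathfrak{m}$ and not on $R_i$, so there is no need to ``realise $R_i$ as a curved Lie-theoretic deformation'', nor to impose any cocycle compatibility of the $P_{i,J}$ across overlaps: the lifts are produced independently for each pair $(i,J)$ and the cocovering machinery of Theorem \ref{cocovergroth} asks for nothing more.
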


The proof is based upon the following lifting result for Koszul complexes (see Theorem \ref{thmlift} in the paper):

\begin{theorem}\label{thmliftintro}
  Let $A$ be a commutative $k$-algebra and $f = (f_1, \dots, f_n)$ a
  sequence of elements in $A$. For $d\ge 1$ there exists a perfect
  complex $X_d \in D(A)$ generating the same thick subcategory of $D(A)$
as the Koszul complex $K(f)$ and satisfying the following property:  let $(R,m)$ be a finite dimensional commutative local $k$-algebra with $m^d = 0$ and $R/m
  = k$. Then $X_d$ may be lifted to any $R$-deformation of~$A$.
\end{theorem}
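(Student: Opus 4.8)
The plan is to reduce the statement to a computation about how the Koszul complex deforms, replacing the Koszul complex with a closely related complex built from the Chevalley--Eilenberg construction. First I would observe that deforming $A$ to an $R$-algebra $\tilde{A}$ (in the sense that $\tilde{A}$ is flat over $R$ with $\tilde{A}\otimes_R k = A$) need not preserve commutativity, so one cannot simply lift the elements $f_i$ to a regular sequence and take a Koszul complex over $\tilde{A}$. Instead, the Koszul complex $K(f)$ should be viewed as the Chevalley--Eilenberg complex of the abelian Lie algebra $L = A^n$ acting trivially, or equivalently as the exterior algebra $\Lambda^\bullet(A^n)$ with differential contracting against $f = (f_1,\dots,f_n)$. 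The key point is that over a deformation the multiplication becomes only ``commutative up to higher order'', i.e. the commutator $[a,b] = ab - ba$ lands in $m\tilde{A}$ and, more precisely, iterated commutators of length $\ge d$ vanish because $m^d = 0$. So the natural object that survives the deformation is not the Koszul complex of an abelian Lie algebra but the Chevalley--Eilenberg complex of a \emph{nilpotent} Lie algebra, the nilpotency being controlled by $d$.

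Concretely, I would take $X_d$ to be (a suitable model for) the Chevalley--Eilenberg complex $\mathrm{CE}(\GGGG_d, A)$ where $\GGGG_d$ is the free nilpotent Lie algebra of nilpotency class $d-1$ on generators $x_1,\dots,x_n$, with $x_i$ acting on $A$ by multiplication by $f_i$. The first claim to verify is that $X_d$ is a perfect complex over $A$: since $\GGGG_d$ is finite-dimensional (free nilpotent on finitely many generators, finitely many brackets), $\mathrm{CE}(\GGGG_d,A)$ is a finite complex of finitely generated free $A$-modules. The second claim is that $X_d$ generates the same thick subcategory of $D(A)$ as $K(f)$. For this I would argue that the Chevalley--Eilenberg complex of $\GGGG_d$ is, up to the filtration by bracket-length, an iterated extension of shifts of the Koszul complex $K(f)$ (the associated graded of the lower central series filtration on $\GGGG_d$ computes $K(f)$ tensored with the exterior algebra on the higher brackets, which are acyclic pieces since the higher generators act by zero) — hence $X_d \in \mathrm{thick}(K(f))$ — and conversely $K(f)$ is a direct summand (or an explicit retract via the inclusion of the generators) of $X_d$ up to the thick closure, giving the reverse inclusion. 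The precise bookkeeping here is the first routine-but-delicate step.

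The heart of the argument is the lifting claim: given any $R$-deformation $\tilde{A}$ of $A$ with $m^d = 0$, the complex $X_d$ lifts to a perfect complex $\tilde{X}_d \in D(\tilde{A})$ with $\tilde{X}_d \otimes_R^{\mathbf{L}} k \cong X_d$. Here I would choose lifts $\tilde{f}_i \in \tilde{A}$ of the $f_i$ arbitrarily, and attempt to build $\tilde{X}_d$ as a twisted Chevalley--Eilenberg complex: the underlying graded $\tilde{A}$-module is $\Lambda^\bullet(\tilde{A}^n) \otimes$ (exterior algebra on higher bracket generators), and the differential $\tilde{d}$ is assembled from contraction against the $\tilde{f}_i$ together with correction terms coming from the commutators $[\tilde{f}_i, \tilde{f}_j]$, iterated commutators, and so on. The point is that the higher bracket generators of $\GGGG_d$ are precisely the slots into which these commutator correction terms can be placed, and the Jacobi-type identities needed for $\tilde{d}^2 = 0$ hold \emph{because} iterated commutators of length $\ge d$ vanish in $\tilde{A}$ (as $m^d = 0$ and each commutator lies in $m\tilde{A}$). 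Thus $\tilde{d}^2 = 0$ is not automatic but follows from the nilpotency built into $\GGGG_d$, matched to the nilpotency of $m$. That $\tilde{X}_d$ reduces to $X_d$ mod $m$ is clear since mod $m$ all commutators vanish and $\tilde{d}$ becomes the ordinary Chevalley--Eilenberg differential; flatness and perfectness over $\tilde{A}$ are then automatic.

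The main obstacle I expect is precisely the verification that the twisted differential $\tilde{d}$ squares to zero — i.e., writing down the correct combinatorial recipe for the higher correction terms and checking the (curved or strict) $L_\infty$-type identities that make $\tilde{d}^2 = 0$, using only $m^d = 0$. This is where the choice of the free nilpotent Lie algebra $\GGGG_d$ of the right class is forced, and where one must be careful that no obstruction class survives. A secondary technical point is to ensure the construction is independent (up to quasi-isomorphism in $D(\tilde A)$) of the choices of lifts $\tilde f_i$, so that $\tilde X_d$ is well-defined; this should follow from a standard homotopy argument comparing two twisted complexes built from different lifts, again using the vanishing of high commutators.
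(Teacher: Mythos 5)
Your high-level construction coincides with the paper's: take $X_d$ to be the Chevalley--Eilenberg complex $A\otimes_k\Lambda^\bullet\NNN$ of the free nilpotent Lie algebra $\NNN$ on $x_1,\dots,x_n$ (with all expressions in $\geq d$ brackets killed), with $x_i$ acting by $f_i$, and the reason this lifts is exactly the one you identify: lifts $f_i'\in A'$ satisfy $[f_i',f_j']\in mA'$, so $k$-fold bracket expressions land in $m^{k}A'$, hence $d$-fold ones vanish. However, two points deserve comment.

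First, the ``main obstacle'' you anticipate --- verifying $\tilde d^2=0$ for a hand-built twisted differential, and worrying about $L_\infty$-type identities and independence of the choice of lifts --- dissolves once the construction is phrased through the universal enveloping algebra, as the paper does. The nilpotency computation says precisely that $x_i\mapsto f_i'$ is a well-defined Lie algebra map $\NNN\to(A',[-,-])$, hence a $k$-algebra map $U=U(\NNN)\to A'$. Since $V(\NNN)=U\otimes_k\Lambda^\bullet\NNN$ with the CE differential is a $U$-free resolution of ${}_Uk$, the lift $X'=A'\otimes_U V(\NNN)$ is \emph{automatically} a complex of finite free $A'$-modules, and $A\otimes^L_{A'}X'=A\otimes^L_U{}_Uk=X_d$ holds by the commuting square $U\to A'$, $U/I\to A$. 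No ad hoc check of $\tilde d^2=0$, and independence of the chosen lifts is not needed for the statement.

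Second, and this is the genuine gap: your argument that $X_d$ and $K(f)$ generate the same thick subcategory is incomplete in one direction. The filtration-by-bracket-weight argument does give $X_d\in\langle K(f)\rangle_A$ (the associated graded is a finite sum of shifts of $K(f)$). But for the converse you propose that ``$K(f)$ is a direct summand (or an explicit retract via the inclusion of the generators) of $X_d$,'' and this does not work: the inclusion $A\otimes\Lambda^\bullet(k^n)\hookrightarrow A\otimes\Lambda^\bullet\NNN$ is \emph{not} a chain map for $d\geq 2$ (already $d_{CE}(x_1\wedge x_2)$ produces the term $1\otimes[x_1,x_2]$ outside the image), and the quotient chain map $X_d\twoheadrightarrow K(f)$ is not split in general. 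The paper's Proposition~\ref{propaux} closes this gap by a compact-generation argument over $U/I=k[x_1,\dots,x_n]$: one shows $U/I\otimes^L_U{}_Uk$ lies in $\langle{}_{U/I}k\rangle_{U/I}$ (finite total cohomology), is perfect hence compact, and compactly generates $\ovl{\langle{}_{U/I}k\rangle}_{U/I}$ (if $X$ is right-orthogonal, then by adjunction $\RHom_U({}_Uk,{}_UX)=0$ with ${}_UX\in\ovl{\langle{}_Uk\rangle}_U$, so ${}_UX=0$); two compact generators of the same localizing subcategory generate the same thick subcategory, and this equality is then transported along $U/I\to A$ by derived base change, giving $\langle K(f)\rangle_A=\langle X_d\rangle_A$. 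You would need to supply some such argument; the retract idea as stated fails.

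(Minor: the paper kills expressions with $\geq d$ brackets, which for $d=1$ gives the abelian Lie algebra and $X_1=K(f)$; your ``nilpotency class $d-1$'' is off by one from this convention.)
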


Concretely, let $\mathfrak{n}$  be the
Lie algebra freely generated by $x_1, \dots, x_n$ subject to the relations that all expressions involving $\geq d$ brackets vanish. Sending $x_i$ to~$f_i$
makes~$A$ into a right $\mathfrak{n}$-representation. Then $X_d$ is defined as the
Chevalley-Eilenberg complex $(A\otimes_k \wedge\mathfrak{n},d_{CE})$ of~$A$.
Clearly $X_1=K(f)$. 
It appears that in general one should think of $X_d$ as a kind of ``higher Koszul complex''.

\vspace{0,5cm}

\noindent \emph{Acknowledgement.}
The first author thanks Tobias Dyckerhoff, Dmitry Kaledin, Bernhard Keller, Alexander Kuznetsov, Daniel Murfet, Amnon Neeman, Paul Smith, Greg Stevenson and Bertrand To\"en for interesting discussions on the topic of this paper. She is especially grateful to Bernhard Keller for his valuable comments on a previous version of this paper.

\section{Coverings of Grothendieck categories}

Localization theory of abelian categories and Grothendieck categories goes back to the work of Gabriel \cite{gabriel}, which actually contains some of the important seeds of non-commutative algebraic geometry, like the fact that noetherian (this condition was later eliminated in the work of Rosenberg \cite{rosenberg2}) schemes can be reconstructed from their abelian category of quasi-coherent sheaves. In the general philosophy (due to Artin, Tate, Stafford, Van den Bergh and others) that non-commutative schemes can be represented by Grothendieck categories ``resembling'' quasi-coherent sheaf categories, localizations of such categories have been a key ingredient in the development of the subject by Rosenberg, Smith, Van Oystaeyen, Verschoren, and others (see eg. \cite{rosenberg}, \cite{smith}, \cite{vanoystaeyen}, \cite{vanoystaeyenverschoren}).
In particular, Van Oystaeyen and Verschoren investigated a notion of compatibility between localizations (see \cite{vanoystaeyen}, \cite{verschoren1}).

More recent approaches to non-commutative algebraic geometry (due to Bondal, Kontsevich, To\"en and others) take triangulated categories (and algebraic enhancements like dg or $A_{\infty}$ algebras and categories) as models for non-commutative spaces. The beautiful abelian localization theory was paralleled by an equally beautiful triangulated localization theory, based upon Verdier and Bousfield localization, see e.g. \cite{krause5} and the references therein. By considering appropriate unbounded derived categories, every Grothendieck localization gives rise to a Bousfield localization.

Recently, the notion of properly intersecting Bousfield subcategories was introduced by Rouquier in the context of his cocovering theorem concerning compact generation of certain triangulated categories \cite{rouquier}. The condition bears a striking similarity to the notion of compatibility in the Grothendieck context, which is even reinforced by the characterization proved by Murfet in \cite{murfet2}.

In this section, we introduce all the relevant notions in both contexts, and we observe that in the special situation where the right adjoints of a collection of compatible localizations of Grothendieck categories are exact, they give rise to properly intersecting Bousfield localizations, and Grothendieck coverings give rise to triangulated coverings.
We go on to deduce a covering theorem for Grothendieck categories (Theorem \ref{cocovergroth}) which allows to prove compact generation of the derived category.

\subsection{Coverings of abelian categories} \label{parcoverab}

We first review the situation for abelian categories.
Let $\ccc$ be an abelian category. A \emph{localization} of $\ccc$ consists of an exact functor
$$a: \ccc \lra \ccc'$$
with a fully faithful right adjoint $i: \ccc' \lra \ccc$. 
A subcategory $\sss \subseteq \ccc$ is called a \emph{Serre subcategory} if it is closed under subquotients and extensions. A Serre subcategory gives rise to an exact \emph{Gabriel quotient} 
$$a: \ccc \lra \ccc/\sss$$
with $\Kern(a) = \sss$. The Serre subcategory $\sss$ is called \emph{localizing} if $a$ is the left adjoint in a localization.  

Now suppose $\ccc$ is Grothendieck. Then $\sss$ is localizing precisely when $\sss$ is moreover closed under coproducts.
Conversely, for every localization $a: \ccc \lra \ccc'$, $\sss = \Kern(a)$ is localizing,
$a$ factors over an equivalence $\ccc/ \sss \cong \ccc'$, and putting
$$\sss^{\perp} = \{ C \in \ccc \,\, |\,\, \Hom_{\ccc}(S,C) = 0 = \Ext^1_{\ccc}(S,C)\,\, \forall S \in \sss \},$$
the right adjoint $i: \ccc' \lra \ccc$ factors over an equivalence $\ccc' \cong \sss^{\perp}$.

Let $\ccc$ be an abelian category.
For full subcategories $\sss_1$, $\sss_2$ of $\ccc$, the \emph{Gabriel product} is given by
$$\sss_1 \ast \sss_2 = \{ C \in \ccc \,\, |\,\, \exists\,\, S_1 \in \sss_1, \,\, S_2 \in \sss_2, \,\, 0 \lra S_1 \lra C \lra S_2 \lra 0\}.$$
Clearly, $\sss$ is closed under extensions if and only if $\sss \ast \sss = \sss$.
An easy diagram argument reveals that the Gabriel product is associative.

\begin{definition} \cite{vanoystaeyen}, \cite{verschoren1}
Full subcategories $\sss_1$, $\sss_2$ of $\ccc$ are called \emph{compatible} if
$$\sss_1 \ast \sss_2 = \sss_2 \ast \sss_1.$$
\end{definition}

For two compatible Serre subcategories, we have $\sss_1 \ast \sss_2 = \langle \sss_1 \cup \sss_2\rangle$, the smallest Serre subcategory containing $\sss_1$ and $\sss_2$.

Clearly, in the picture of a localization, the data of $\sss$, $a$ and $i$ determine each other uniquely.

\begin{proposition} \cite{vanoystaeyen} \cite{vanoystaeyenverschoren}\label{propcomp0}
Consider localizations $(\sss_1, a_1, i_1)$ and $(\sss_2, a_2, i_2)$ of $\ccc$. Put $q_1 = i_1 a_1$ and $q_2 = i_2 a_2$. The following are equivalent:
\begin{enumerate}
\item $\sss_1$ and $\sss_2$ are compatible.
\item $q_1(\sss_2) \subseteq \sss_2$ and $q_2(\sss_1) \subseteq \sss_1$.
\item $q_1 q_2 = q_2 q_1$.
\end{enumerate}
\end{proposition}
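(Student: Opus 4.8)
\textbf{Proof plan for Proposition \ref{propcomp0}.}

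The strategy is to prove the chain of implications $(3) \Rightarrow (2) \Rightarrow (1) \Rightarrow (3)$, exploiting the basic localization facts recalled above: for a localization $(\sss, a, i)$ one has $q = ia$ with $\Kern(a) = \sss$, the counit $q \lra \mathrm{id}$ has kernel and cokernel in $\sss$ (indeed for any $C$ there is an exact sequence $0 \lra C' \lra C \lra qC \lra C'' \lra 0$ with $C', C'' \in \sss$), and $C \in \sss$ if and only if $qC = 0$. I would also use that $q$ is left exact (being the composite of the exact $a$ and the left exact $i$), and the standard fact that $\sss^\perp = \Beeld(i)$ is precisely the class of objects on which the counit $qC \lra C$ is an isomorphism.

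For $(3) \Rightarrow (2)$: suppose $q_1 q_2 = q_2 q_1$ and take $S \in \sss_2$. Then $q_2 q_1 S = q_1 q_2 S = q_1 (0) = 0$, so $q_1 S \in \Kern(a_2) = \sss_2$; symmetrically $q_2(\sss_1) \subseteq \sss_1$. For $(2) \Rightarrow (1)$: assume $q_1(\sss_2) \subseteq \sss_2$ and $q_2(\sss_1) \subseteq \sss_1$, and take $C \in \sss_1 \ast \sss_2$, so there is a short exact sequence $0 \lra S_1 \lra C \lra S_2 \lra 0$ with $S_i \in \sss_i$. I want to show $C \in \sss_2 \ast \sss_1$. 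The idea is to apply $q_2$: since $q_2$ is left exact we get an exact sequence $0 \lra q_2 S_1 \lra q_2 C \lra q_2 S_2$, but $q_2 S_2 = 0$ (as $S_2 \in \sss_2 = \Kern(a_2)$) and $q_2 S_1 \in \sss_1$ by hypothesis; hence $q_2 C \in \sss_1$. Now consider the counit exact sequence $0 \lra C' \lra C \lra q_2 C \lra C'' \lra 0$ with $C', C'' \in \sss_2$. Breaking this into two short exact sequences and using that $q_2 C \in \sss_1$ while $\Beeld(C \lra q_2 C)$ is a subobject of $q_2 C$ hence (Serre) lies in $\sss_1$, one assembles $C$ as an extension of an object of $\sss_1$ by an object of $\sss_2$, i.e. $C \in \sss_2 \ast \sss_1$; the reverse inclusion is symmetric. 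Since the Gabriel product is associative and $\sss_i \ast \sss_i = \sss_i$, this gives $\sss_1 \ast \sss_2 = \sss_2 \ast \sss_1$.

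For $(1) \Rightarrow (3)$, the key point is to show $q_1 q_2 C \cong q_2 q_1 C$ naturally for all $C$. Here I would argue that $q_1 q_2 C$ lies in $\sss_1^\perp \cap \sss_2^\perp$: it is in $\sss_2^\perp = \Beeld(i_2)$ since $q_2 C$ is, provided $q_1$ preserves $\sss_2^\perp$, which follows from compatibility — indeed compatibility of the Serre subcategories translates (via the counit sequences) into $q_1(\sss_2^\perp) \subseteq \sss_2^\perp$ and likewise with indices swapped. Granting this, both $q_1 q_2 C$ and $q_2 q_1 C$ are the reflection of $C$ into $\sss_1^\perp \cap \sss_2^\perp = \langle \sss_1 \cup \sss_2 \rangle^\perp$ (using $\sss_1 \ast \sss_2 = \langle \sss_1 \cup \sss_2\rangle$ for compatible Serre subcategories, as noted above), hence canonically isomorphic by uniqueness of adjoints. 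The main obstacle is the bookkeeping in $(2)\Rightarrow(1)$ and establishing $q_1(\sss_2^\perp)\subseteq \sss_2^\perp$ from compatibility: one must carefully chase the four-term counit exact sequences and repeatedly invoke that Serre subcategories are closed under subobjects, quotients and extensions, and that $q_i$ kills $\sss_i$ and is left exact. Everything else is formal manipulation with adjunctions.
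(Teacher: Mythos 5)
The paper states Proposition~\ref{propcomp0} with a citation to \cite{vanoystaeyen}, \cite{vanoystaeyenverschoren} and gives no proof of its own, so there is nothing to compare your argument against except correctness.

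Your implications $(3)\Rightarrow(2)$ and $(2)\Rightarrow(1)$ are correct and essentially complete: for the first, $q_1q_2=q_2q_1$ and $S\in\sss_2$ give $q_2q_1S=q_1q_2S=0$, so $a_2q_1S=0$, i.e.\ $q_1S\in\sss_2$; for the second, starting from $0\to S_1\to C\to S_2\to 0$, left exactness of $q_2$ gives $q_2C\cong q_2S_1\in\sss_1$, and splitting the unit sequence $0\to K\to C\to q_2C\to L\to 0$ (with $K,L\in\sss_2$) at the image yields $0\to K\to C\to \Beeld\to 0$ with $\Beeld\subseteq q_2C\in\sss_1$, so $C\in\sss_2\ast\sss_1$.

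The gap is in $(1)\Rightarrow(3)$, and it is precisely where you yourself flag ``the main obstacle.'' You reduce the whole implication to the claim $q_1(\sss_2^\perp)\subseteq\sss_2^\perp$, and you assert that this ``follows from compatibility via the counit sequences.'' Only the $\Hom$--half is that easy. Indeed, a chase of the kind you describe shows $q_1D$ is $\sss_2$--torsion--free for $D\in\sss_2^\perp$: any $A=t_2(q_1D)$ satisfies $q_1A\subseteq A$ by left exactness and $q_1(\sss_2)\subseteq\sss_2$, so $A\in\sss_1^\perp\cap\sss_2$, and then comparing $A$ with $\Beeld(D\to q_1D)$ and using $\sss_1\ast\sss_2=\sss_2\ast\sss_1$ forces $A=0$. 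But $\sss_2^\perp$ is defined by vanishing of both $\Hom(\sss_2,-)$ and $\Ext^1(\sss_2,-)$, and the $\Ext^1$--half (that $q_1D$ is $\sss_2$--closed, i.e.\ that $q_1D\to q_2q_1D$ is \emph{onto}) does not fall out of ``Serre subcategories are closed under subquotients and extensions and $q_i$ is left exact.'' One keeps producing objects in $\sss_1\cap\sss_2$, which is not zero in general, and the cokernel of the unit involves $R^1i_1$, which the listed ingredients do not control. So the statement $q_1(\sss_2^\perp)\subseteq\sss_2^\perp$ is exactly as strong as $(3)$ itself (note that $(3)$ implies it immediately: $q_1D=q_1q_2D=q_2q_1D\in\sss_2^\perp$), and asserting it without proof leaves $(1)\Rightarrow(3)$ unproved. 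To close the cycle you either need a real argument for this inclusion --- in the cited references it is handled with torsion--theoretic machinery (Gabriel filters / torsion radicals), not a direct counit chase --- or a different route, e.g.\ proving $(2)\Rightarrow(3)$ by identifying $a_1i_2$ with $i_1^{\sss}a_2^{\sss}$ as functors $\ccc/\sss_2\to\ccc/\sss_1$, which again hinges on the same nontrivial closedness statement.
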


In the situation of Proposition \ref{propcomp0}, we speak about \emph{compatible} localizations.
A collection of Serre subcategories (or localizations) is called \emph{compatible} if the corresponding localizations are pairwise compatible.

\begin{definition}
A collection $\Sigma$ of Serre subcategories of $\ccc$ is called a \emph{covering} of $\ccc$ if
$$\bigcap \Sigma = \bigcap_{\sss \in \Sigma} \sss = 0.$$
\end{definition}

By this definition, the collection of functors $a: \ccc \lra \ccc/ \sss$ with $\sss \in \Sigma$ ``generates'' $\ccc$ in the sense that $C \in \ccc$ is non-zero if and only if $a(C)$ is non-zero for some $\sss \in \Sigma$.

\begin{proposition}
Consider a collection $\Sigma$ of localizing Serre subcategories of $\ccc$. The following are equivalent:
\begin{enumerate}
\item $\Sigma$ is a covering of $\ccc$.
\item The objects $i(D)$ for $D \in \ccc/\sss$ and $\sss \in \Sigma$ cogenerate $\ccc$, i.e a morphism $C' \lra C$ in $\ccc$ is non-zero if and only if there exists a morphism $C \lra i(D)$ with $D \in \ccc/\sss$ for some $\sss \in \Sigma$ such that  $C' \lra C \lra i(D)$ is non-zero.
\end{enumerate}
\end{proposition}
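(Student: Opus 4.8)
The plan is to establish the equivalence between the covering condition $\bigcap_{\sss \in \Sigma}\sss = 0$ and the cogeneration property by unwinding both sides through the adjunctions $(a,i)$ associated to each $\sss \in \Sigma$, using the description $\ccc' \cong \sss^{\perp}$ of the right adjoint recalled above. First I would observe that, since each $\sss$ is localizing, the counit $q = ia \Rightarrow \mathrm{id}_{\ccc}$ fits into an exact sequence $0 \to \sss\text{-torsion part} \to C \to q(C) \to (\text{a term in }\sss\ast 0)$, so that $q(C) = 0$ forces $C \in \sss$ after the relevant bookkeeping; more usefully, a morphism $C \to i(D)$ with $D \in \ccc/\sss$ factors through $q(C)$, so the collection $\{i(D)\}_{D,\sss}$ cogenerates $\ccc$ precisely when the collection $\{q_\sss(C)\}_\sss$ does, where $q_\sss = i_\sss a_\sss$. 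Thus the statement reduces to: $\bigcap_\sss \sss = 0$ iff for every nonzero $f\colon C' \to C$ there is some $\sss$ with $q_\sss(f) \neq 0$.

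Next I would handle the two implications. For (1) $\Rightarrow$ (2): given $0 \neq f\colon C' \to C$, its image $\Beeld(f)$ is a nonzero subobject of $C$, hence by the covering hypothesis $\Beeld(f) \notin \sss$ for some $\sss \in \Sigma$, i.e.\ $a_\sss(\Beeld(f)) \neq 0$; since $a_\sss$ is exact it preserves images, so $a_\sss(f) \neq 0$, and therefore the composite $C' \xrightarrow{f} C \xrightarrow{\eta} q_\sss(C) = i_\sss a_\sss(C)$ is nonzero (the unit $\eta$ is a monomorphism on $q_\sss(C)$-local objects, and one checks $a_\sss$ of this composite is $a_\sss(f)$ up to the natural iso $a_\sss i_\sss \cong \mathrm{id}$). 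Taking $D = a_\sss(C)$ gives the required map $C \to i(D)$. For (2) $\Rightarrow$ (1): suppose $0 \neq C \in \bigcap_\sss \sss$. Apply the cogeneration property to the identity $\mathrm{id}_C\colon C \to C$: there would be some $\sss$ and $D \in \ccc/\sss$ with $C \to i(D)$ nonzero. But $C \in \sss = \Kern(a_\sss)$ means $a_\sss(C) = 0$, and since $i(D) \in \sss^{\perp}$ we have $\Hom_\ccc(C, i(D)) \cong \Hom_{\ccc/\sss}(a_\sss(C), D) = 0$ by adjunction, a contradiction. Hence $\bigcap_\sss \sss = 0$.

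The main obstacle — really the only subtle point — is the first implication, specifically verifying that nonvanishing of $a_\sss(f)$ upgrades to nonvanishing of the composite $C' \to C \to i_\sss a_\sss(C)$ in $\ccc$ itself. This hinges on the fact that for a localizing $\sss$ the functor $a_\sss$ is exact and the unit $C \to q_\sss(C)$ becomes an isomorphism after applying $a_\sss$; one then uses that $\Hom_\ccc(C', i_\sss(D)) \cong \Hom_{\ccc/\sss}(a_\sss(C'), D)$ naturally, so a map $C' \to C \to i_\sss a_\sss(C)$ is zero iff its adjoint transpose $a_\sss(C') \to a_\sss(C)$ is zero, and the latter adjoint transpose is exactly $a_\sss(f)$. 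Everything else is a formal diagram chase with the Gabriel quotient and the characterization $i(\ccc/\sss) \cong \sss^\perp$, so I would present it compactly rather than in full detail.
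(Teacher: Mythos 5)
Your proof is correct and follows the same route as the paper, which simply asserts that the equivalence ``easily follows from the adjunction between $a$ and $i$''; the essential point you correctly identify is that for the unit $\eta_C\colon C \to i_\sss a_\sss(C)$, the adjoint transpose of $\eta_C \circ f$ is $a_\sss(f)$, so the composite vanishes iff $a_\sss(f)$ does, and exactness of $a_\sss$ reduces the covering condition to the cogeneration condition. The opening remark about the counit (which should read unit) and the exact sequence with $\sss$-torsion are not actually used and could be dropped; the rest is the intended argument.
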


\begin{proof}
This easily follows from the adjunction between $a$ and $i$.
\end{proof}

The notion of a compatible covering is inspired by open coverings of schemes. For a covering collection $j: U_i \lra X$ of open subschemes of a scheme $X$ (i.e. $X = \cup U_i$), the collection of localizations $j^{\ast}: \Qch(X) \lra \Qch(U_i)$ consitutes a compatible covering of $\Qch(X)$.

\subsection{Descent categories}\label{pardescent}

Consider a compatible collection $\Sigma$ of localizations $\ccc_j$ of $\ccc$ indexed by a finite set $I$. We then obtain commutative (up to natural isomorphism) diagrams of localizations
$$\xymatrix{{\ccc} \ar[r]^{a_k} \ar[d]_{a_j} & {\ccc_k} \ar[d]^{a^k_{kj}} \\ {\ccc_j} \ar[r]_{a^j_{kj}} & {\ccc_{kj}}}$$
with $\ccc_k = \sss^{\perp}_k$, $\ccc_{kj} = (\sss_k \ast \sss_j)^{\perp} = \ccc_j \cap \ccc_k$. 
 
Using associativity of the Gabriel product and compatibility of the localizations, we obtain for each $J = \{ j_1, \dots, j_p\} \subseteq I$ a localizing subcategory
$$\sss_J = \sss_{j_1} \ast \dots \ast \sss_{j_p}$$
of $\ccc$ with corresponding localization
$$\ccc_J = \ccc_{j_1} \cap \dots \cap \ccc_{j_p}$$
of $\ccc$, and all the $\sss_J$ are compatible. In particular, we obtain for every inclusion $K \subseteq J$ a further localization $a^K_J: \ccc_K \lra \ccc_J$ left adjoint to the inclusion $i^K_J: \ccc_J \lra \ccc_K$. It is easily seen that for $K \subseteq J_1$ and $K \subseteq J_2$, the localizations $a^K_{J_1}$ and $a^K_{J_2}$ are compatible. Let $\Delta_{\varnothing}$ be the category of finite subsets of $I$ ordered by inclusions, and let $\Delta$ be the subcategory of non-empty subsets. Putting $\ccc_{\varnothing} = \ccc$, the categories $\ccc_J$ for $J \subseteq I$ can be organized into a pseudofunctor
$$\ccc_{\bullet}: \Delta_{\varnothing} \lra \Cat: J \longmapsto \ccc_J$$
with the $a^K_J$ for $K \subseteq J$ as restriction functors.
Hence, $\ccc_{\bullet}$ is a fibered category of localizations in the sense of \cite[Definition 2.4]{lowen4}.

We define the  \emph{descent category} $\Des(\Sigma)$ of $\Sigma$ to be the descent category $\Des(\ccc_{\bullet}|_{\Delta})$, i.e it is a bi-limit of the restricted pseudofunctor $\ccc_{\bullet}|_{\Delta}$.

In particular, we obtain a natural comparison functor
$$\ccc \lra \Des(\Sigma).$$
We conclude:

\begin{proposition}\label{propcovdes}
The compatible collection $\Sigma$ of localizations of $\ccc$ is a covering if and only if the comparison functor $\ccc \lra \Des(\Sigma)$ is faithful.
\end{proposition}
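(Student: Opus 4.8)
The plan is to identify faithfulness of the comparison functor $\ccc \lra \Des(\Sigma)$ with the \emph{joint} faithfulness of the localizations $a_j \colon \ccc \lra \ccc_j$, $j \in I$, and then to recognise the latter as precisely the covering condition $\bigcap \Sigma = 0$.

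\textbf{Step 1: peel off the singleton pieces of the descent category.} Projecting a descent datum onto its components at the singletons $\{j\} \in \Delta$ defines a functor $G \colon \Des(\Sigma) \lra \prod_{j \in I} \ccc_j$, and I would first check that $G$ is faithful. By definition a morphism $(\phi_J)_{J \in \Delta}$ of $\Des(\Sigma)$ is compatible with the restriction functors, so for every non-empty $J$ and every $j \in J$ the coherence isomorphism $a^{\{j\}}_J(M_{\{j\}}) \cong M_J$ carries $\phi_J$ to $a^{\{j\}}_J(\phi_{\{j\}})$; since every non-empty $J$ contains a singleton and $a^{\{j\}}_J$ is additive, the vanishing of all $\phi_{\{j\}}$ forces the vanishing of all $\phi_J$, so $G$ is faithful. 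Composing the comparison functor with $G$ gives $\ccc \lra \prod_{j \in I} \ccc_j$, $C \longmapsto (a_j(C))_j$ (and $f \longmapsto (a_j(f))_j$ on morphisms). Since $G$ is faithful, the comparison functor is faithful if and only if this composite is; using additivity of the $a_j$ to reduce $a_j(f) = a_j(g)$ to $a_j(f - g) = 0$, the latter means: $a_j(h) = 0$ for all $j \in I$ implies $h = 0$, for every morphism $h$ of $\ccc$.

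\textbf{Step 2: translate this into a statement about $\bigcap \Sigma$.} Each $a_j$ is exact, so $\Beeld(a_j(h)) = a_j(\Beeld(h))$, and in an abelian category a morphism vanishes precisely when its image does; hence $a_j(h) = 0$ if and only if $\Beeld(h) \in \Kern(a_j) = \sss_j$. Running over all $j$, the composite $\ccc \lra \prod_{j} \ccc_j$ is faithful if and only if every morphism $h$ of $\ccc$ with $\Beeld(h) \in \bigcap_{j \in I} \sss_j = \bigcap \Sigma$ is zero.

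\textbf{Step 3: close the equivalence.} If $\Sigma$ is a covering, then $\bigcap \Sigma = 0$, so $\Beeld(h) \in \bigcap \Sigma$ forces $\Beeld(h) = 0$ and hence $h = 0$; thus the comparison functor is faithful. Conversely, if the comparison functor is faithful and $S \in \bigcap \Sigma$, then $\mathrm{id}_S$ has image $S \in \bigcap \Sigma$, so $\mathrm{id}_S = 0$ and $S = 0$, whence $\bigcap \Sigma = 0$. I do not anticipate a genuine obstacle: the argument is an unwinding of the definitions, the only step meriting care being the faithfulness of the projection $G$ in Step 1, which is exactly where the structure of $\ccc_\bullet$ as a fibered category of localizations — each $\ccc_J$ being a localization of each $\ccc_{\{j\}}$ for $j \in J$ — is used.
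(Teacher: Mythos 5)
Your proof is correct. The paper states Proposition \ref{propcovdes} without a written argument (it follows ``We conclude:'' after the construction of the comparison functor), so there is no author proof to compare against; your write-up is a complete unwinding of the definitions and fills in exactly what the paper leaves implicit. All three steps check out: the auxiliary projection $G \colon \Des(\Sigma) \to \prod_j \ccc_j$ is indeed faithful for the reason you give, and since a composite of faithful functors is faithful while faithfulness of a composite trivially implies faithfulness of the first factor, the comparison functor is faithful if and only if $C \mapsto (a_j(C))_j$ is. Your Step 2 correctly uses exactness of the quotient functors $a_j$ to translate $a_j(h) = 0$ into $\Beeld(h) \in \sss_j$, and Step 3 (take $h = \mathrm{id}_S$) closes the loop. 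One small remark: Step 1 can be streamlined by noting directly that for any nonempty $J$ containing a singleton $\{j\}$, the functor $a_J$ factors (up to natural isomorphism) as $a^{\{j\}}_J \circ a_j$, so the single condition $a_j(h)=0$ for all $j$ already forces $a_J(h)=0$ for all $J$; your functor $G$ is a clean way to package this, but the factorization through singletons is where the content lies.
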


Conversely, descent categories yield a natural way of constructing an abelian category covered by a given collection of abelian categories. More precisely, let $I$ be a finite index set, let $\Delta$ be as above, and let 
$$\ccc_{\bullet}: \Delta \lra \Cat: J \lra \ccc_J$$
be a pseudofunctor for which every $a^K_J: \ccc_K \lra \ccc_J$ for $K \subseteq J$ is a localization with right adjoint $i^K_J$ and $\Kern(a^K_J) = \sss^K_J$. Suppose moreover that for $K \subseteq J_1$ and $K \subseteq J_2$ the corresponding localizations are compatible and $\sss^K_{J_1 \cup J_2} = \sss^K_{J_1} \ast \sss^K_{J_2}$.
Consider the descent category $\Des(\ccc_{\bullet})$ with canonical functors
$$a_K: \Des(\ccc_{\bullet}) \lra \ccc_K: (X_J)_J \longmapsto X_K$$

\begin{proposition}\label{desaffine}
\begin{enumerate}
\item The functor $a_K$ is a localization with fully faithful right adjoint $i_K$ with
$$a_J i_K = i^{J}_{K \cup J} a^K_{K \cup J}: \ccc_K \lra \ccc_J.$$
\item The localizations $a_K$ are compatible.
\item The localizations $a_K$ consitute a covering of $\Des(\ccc_{\bullet})$.
\item If the functors $i^{J}_{K \cup J}$ are exact, then so are the functors $i_K$.
\end{enumerate}
\end{proposition}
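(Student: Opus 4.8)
The plan is to concentrate on part (1), from which parts (2)--(4) follow quickly, and to work throughout with the componentwise description of $\Des(\ccc_{\bullet})$: an object is a family $(X_J)_J$ equipped with coherent isomorphisms $X_J \cong a^{J'}_J X_{J'}$ for $J' \subseteq J$; kernels, cokernels and coproducts are formed componentwise; $\Des(\ccc_{\bullet})$ is abelian (in fact Grothendieck); the evaluation functor $a_K\colon (X_J)_J \longmapsto X_K$ is exact; and a sequence in $\Des(\ccc_{\bullet})$ is exact precisely when each of its $J$-components is. (This is the basic formalism of descent categories of fibered categories of localizations; see \cite{lowen4}.)

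For part (1) I would define $i_K\colon \ccc_K \lra \Des(\ccc_{\bullet})$ on objects by $(i_K M)_J = i^J_{K\cup J}\, a^K_{K\cup J}\, M$. The first task is to show that $i_K M$ really is an object of $\Des(\ccc_{\bullet})$: for each $J' \subseteq J$ one must produce an isomorphism $a^{J'}_J\, i^{J'}_{K\cup J'}\, a^K_{K\cup J'}\, M \cong i^J_{K\cup J}\, a^K_{K\cup J}\, M$ and check the cocycle condition over chains $J'' \subseteq J' \subseteq J$. Such an isomorphism is assembled from the pseudofunctor structure constraints together with the base-change isomorphism attached to the commutative square of compatible localizations of $\ccc_{J'}$ with corners $\ccc_{J'}$, $\ccc_J$, $\ccc_{K\cup J'}$, $\ccc_{K\cup J}$; here the hypothesis $\sss^{K'}_{J_1\cup J_2} = \sss^{K'}_{J_1}\ast\sss^{K'}_{J_2}$ is exactly what makes the relevant base-change morphism invertible. \textbf{Verifying the coherence of this family of isomorphisms is the step I expect to be the main obstacle}: it is a lengthy but purely formal diagram chase that consumes all of the standing compatibility hypotheses and the pseudofunctor axioms.

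Granting that $i_K$ lands in $\Des(\ccc_{\bullet})$, the adjunction $a_K \dashv i_K$ is short. For $N = (N_J)_J$, a morphism $N \lra i_K M$ is a coherent family of morphisms $N_J \lra i^J_{K\cup J}\, a^K_{K\cup J}\, M$ in $\ccc_J$, which under the adjunctions $(a^J_{K\cup J}, i^J_{K\cup J})$ corresponds to a coherent family $a^J_{K\cup J}\, N_J \lra a^K_{K\cup J}\, M$; using the descent isomorphisms $a^J_{K\cup J}\, N_J \cong N_{K\cup J} \cong a^K_{K\cup J}\, N_K$ and the fact that the poset of subsets containing $K$ has least element $K$, such a family is freely and uniquely determined by its $J = K$ component, that is, by a morphism $N_K \lra M$ in $\ccc_K$. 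This gives a natural bijection $\Hom_{\Des(\ccc_{\bullet})}(N, i_K M) \cong \Hom_{\ccc_K}(a_K N, M)$; taking $J = K$ in the defining formula yields $a_K i_K M = M$, so the counit is invertible and $i_K$ is fully faithful, and the identity $a_J i_K = i^J_{K\cup J}\, a^K_{K\cup J}$ holds by construction. Combined with the exactness of $a_K$, this yields (1).

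For part (2), write $q_K = i_K a_K$; for $M \in \Des(\ccc_{\bullet})$, combining $(q_K M)_J = i^J_{K\cup J}\, a^K_{K\cup J}\, M_K$ with the descent isomorphism $a^K_{K\cup J}\, M_K \cong a^J_{K\cup J}\, M_J$ gives $(q_K M)_J \cong q^J_{K\cup J}\, M_J$, where $q^J_{K\cup J} = i^J_{K\cup J}\, a^J_{K\cup J}$. Hence $q_K q_L$ and $q_L q_K$ act on the $J$-component by $q^J_{K\cup J}\, q^J_{L\cup J}$ and $q^J_{L\cup J}\, q^J_{K\cup J}$, and these coincide because the localizations $a^J_{K\cup J}$ and $a^J_{L\cup J}$ of $\ccc_J$ are compatible (the standing hypothesis, with base $J$ and the subsets $K\cup J$ and $L\cup J$), so their idempotents commute by Proposition \ref{propcomp0}; thus $q_K q_L = q_L q_K$, and Proposition \ref{propcomp0} gives that $a_K$ and $a_L$ are compatible. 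Part (3) is immediate: $\Kern(a_K) = \{(X_J)_J \mid X_K = 0\}$, so $\bigcap_K \Kern(a_K)$ consists of the families all of whose components vanish, i.e. is the zero object, and the $a_K$ form a covering. Finally, for part (4), in $(i_K M)_J = i^J_{K\cup J}\, a^K_{K\cup J}\, M$ the functor $a^K_{K\cup J}$ is exact (it is a Gabriel quotient) and $i^J_{K\cup J}$ is exact by assumption, so $M \longmapsto (i_K M)_J$ is exact for every $J$; since exactness in $\Des(\ccc_{\bullet})$ is tested componentwise, $i_K$ is exact.
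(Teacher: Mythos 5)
Your argument is correct and follows the same componentwise strategy the paper sketches: defining $i_K$ by $(i_K M)_J = i^J_{K\cup J}\, a^K_{K\cup J}\, M$, reducing a morphism into $i_K M$ to its $K$-component to obtain the adjunction, and reading off (3) and (4) from the componentwise structure of $\Des(\ccc_{\bullet})$. The one place you go beyond the paper is item (2), where the paper's proof is in fact left blank; your computation $(q_K M)_J \cong q^J_{K\cup J} M_J$ followed by the commutation $q^J_{K\cup J}\, q^J_{L\cup J} = q^J_{L\cup J}\, q^J_{K\cup J}$ via the standing compatibility hypothesis and Proposition \ref{propcomp0} supplies exactly the missing detail.
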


\begin{proof}
(1) For $X \in \ccc_K$, using compatibility of the localizations occuring in $\ccc_{\bullet}$, $(i^{J}_{K \cup J} a^K_{K \cup J}(X))_J$ can be made into a descent datum $i_K(X)$, and $i_K$ can be made into a functor right adjoint to $a_K$. Since $a_K i_K = 1_{\ccc_K}$, the functor $i_K$ is fully faithful. (2) 
(3) Immediate from Proposition \ref{propcovdes}. (4) Immediate from the formula in (1).
\end{proof}

\subsection{Coverings of triangulated categories}\label{parcovertria}

Next we review the situation for triangulated categories. For an excellent introduction to the localization theory of triangulated catgeories, we refer the reader to \cite{krause5}.

Let $\ttt$ be a triangulated category. A \emph{(Bousfield) localization} of $\ttt$ consists of an exact functor
$$a: \ttt \lra \ttt'$$
with a fully faithful (automatically exact) right adjoint $i: \ttt' \lra \ttt$. A subcategory $\iii \subseteq \ttt$ is called \emph{triangulated} if it is closed under cones and shifts and \emph{thick} if it is moreover closed under direct summands. A thick subcategory gives rise to an exact \emph{Verdier quotient}
$$a: \ttt \lra \ttt/\iii$$
with $\Kern(a) = \iii$. The thick subcategory $\iii$ is called a \emph{Bousfield subcategory} if $a$ is the left adjoint in a localization.

For every localization $a: \ttt \lra \ttt'$, $\iii = \Kern(a)$ is Bousfield,
$a$ factors over an equivalence $\ttt/ \iii \cong \ttt'$, and putting
$$\iii^{\perp} = \{ T \in \ttt \,\, |\,\, \Hom_{\ttt}(I,T) = 0  \,\, \forall I \in \iii \},$$
the right adjoint $i: \ttt' \lra \ttt$ factors over an equivalence $\ttt' \cong \iii^{\perp}$.

For full subcategories $\iii_1$, $\iii_2$ of $\ttt$, the \emph{Verdier product} is given by
$$\iii_1 \ast \iii_2 = \{ T \in \ttt \,\, |\,\, \exists\,\, I_1 \in \iii_1, \,\, I_2 \in \iii_2, \,\, I_1 \lra T \lra I_2 \lra \}.$$
Clearly, $\iii$ is triangulated if and only if $\iii = \iii \ast \iii$.

\begin{definition} \cite{rouquier} \cite{murfet2}
Full subcategories $\iii_1$, $\iii_2$ of $\ttt$ are said to \emph{intersect properly} if
$$\iii_1 \ast \iii_2 = \iii_2 \ast \iii_1.$$
\end{definition}

For two properly intersecting thick subcategories, $\iii_1 \ast \iii_2 = \langle \iii_1 \cup \iii_2\rangle$, the smallest thick subcategory containing $\iii_1$ and $\iii_2$.

Clearly, in the picture of a localization, the data of $\iii$, $a$ and $i$ determine eachother uniquely.

\begin{proposition}  \cite{rouquier} \cite{murfet2} \label{propcomp}
Consider localizations $(\iii_1, a_1, i_1)$ and $(\iii_2, a_2, i_2)$ of $\ccc$. Put $q_1 = i_1 a_1$ and $q_2 = i_2 a_2$. The following are equivalent:
\begin{enumerate}
\item $\iii_1$ and $\iii_2$ are compatible.
\item $q_1(\iii_2) \subseteq \iii_2$ and $q_2(\iii_1) \subseteq \iii_1$.
\item $q_1 q_2 = q_2 q_1$.
\end{enumerate}
\end{proposition}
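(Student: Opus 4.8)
The plan is to mirror, in the triangulated setting, the classical equivalences for compatible Gabriel localizations (Proposition \ref{propcomp0}). The structural ingredients are the same: each $q_t = i_t a_t$ is an idempotent endofunctor ($q_t^2 \cong q_t$) sitting in a distinguished triangle $\iota_t \to \mathrm{id} \to q_t \to$, where $\iota_t\colon T \mapsto \mathrm{Ker}(T \to q_t T)$ is a ``colocalization'' functor landing in $\iii_t$; moreover $T \in \iii_t$ iff $q_t T = 0$ iff $T \cong \iota_t T$, and $T \in \iii_t^\perp$ iff $\iota_t T = 0$ iff $T \cong q_t T$. I would establish these standard facts first (or cite \cite{krause5}), since all three implications lean on them. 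The main work is then a cyclic argument $(1)\Rightarrow(2)\Rightarrow(3)\Rightarrow(1)$.

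For $(1)\Rightarrow(2)$: given $I_2 \in \iii_2$, complete $\iota_1 I_2 \to I_2 \to q_1 I_2 \to$ to a distinguished triangle. Since $\iota_1 I_2 \in \iii_1$ and we want $q_1 I_2 \in \iii_2$, the triangle exhibits $q_1 I_2$ as a cone of a map into $I_2 \in \iii_2$ from an object of $\iii_1$; by definition $q_1 I_2 \in \iii_1 \ast \iii_2 = \iii_2 \ast \iii_1$, so there is a triangle $I_2' \to q_1 I_2 \to I_1' \to$ with $I_2' \in \iii_2$, $I_1' \in \iii_1$. Applying $a_1$ kills $I_1'$ (as $I_1' \in \iii_1 = \mathrm{Ker}(a_1)$) and shows $a_1 q_1 I_2 \cong a_1 I_2'$; but $a_1 q_1 \cong a_1$, so $a_1 I_2 \cong a_1 I_2'$. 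Then $q_1 I_2 \cong i_1 a_1 I_2 \cong i_1 a_1 I_2'$, and since $i_1$ is fully faithful this identifies $q_1 I_2$ with a localization of the object $I_2' \in \iii_2$; I then argue that $q_1 I_2$ in fact lies in $\iii_2$ because $I_1'$, being simultaneously a cone between an object of $\iii_1^\perp$-type data and $\iii_1$, is forced to be zero — more carefully, $q_1 I_2 \in \iii_1^\perp$ by construction, and the triangle $I_2' \to q_1 I_2 \to I_1'$ with the outer terms in $\iii_2$ and $\iii_1$ respectively, together with $q_1 I_2 \in \iii_1^\perp$, gives $\mathrm{Hom}(I_1'[-1], q_1 I_2)=0$, so the triangle splits and $q_1 I_2$ is a summand of $I_2' \oplus I_1'$; thickness of $\iii_2$ plus $q_1 I_2 \perp \iii_1$ then pins $q_1 I_2 \in \iii_2$. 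Symmetrically $q_2(\iii_1)\subseteq \iii_1$.

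For $(2)\Rightarrow(3)$: fix $T \in \ttt$ and apply $q_2$ to the triangle $\iota_1 T \to T \to q_1 T \to$, obtaining $q_2\iota_1 T \to q_2 T \to q_2 q_1 T \to$. By hypothesis $q_2 \iota_1 T \in \iii_1$ (as $\iota_1 T \in \iii_1$), and $q_2 q_1 T$: one checks $q_1 T \in \iii_2^\perp$-adjacent, so after also feeding the triangle $\iota_2 T \to T \to q_2 T$ through $q_1$ and comparing, both $q_1 q_2 T$ and $q_2 q_1 T$ are identified as the image of $T$ under the localization onto $\iii_1^\perp \cap \iii_2^\perp = (\iii_1 \ast \iii_2)^\perp$; the two constructions agree because the relevant colocalization functors commute up to the octahedral axiom. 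Concretely I would use the octahedron on the composite $\iota_2\iota_1 T \to \iota_1 T \to T$ to produce a canonical colocalization onto $\langle \iii_1\cup\iii_2\rangle = \iii_1\ast\iii_2$ (using $(2)$ to know this product is thick and symmetric), whose complementary localization is computed either by $q_1 q_2$ or by $q_2 q_1$, yielding $q_1 q_2 \cong q_2 q_1$. For $(3)\Rightarrow(1)$: from $q_1 q_2 \cong q_2 q_1$ one gets that the common idempotent $q := q_1 q_2$ has $\mathrm{Ker}$ a thick subcategory $\iii$ containing both $\iii_1$ and $\iii_2$; I would show $\iii = \iii_1 \ast \iii_2$ and $\iii = \iii_2 \ast \iii_1$ by factoring the colocalization triangle of $q$ through those of $q_1$ and $q_2$ in the two possible orders, whence $\iii_1 \ast \iii_2 = \iii = \iii_2 \ast \iii_1$.

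\medskip

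\noindent\emph{Main obstacle.} The routine-looking part — verifying idempotence and the perp-descriptions — is genuinely routine and can be cited. The real subtlety is in $(1)\Rightarrow(2)$ and in the octahedral bookkeeping for $(2)\Rightarrow(3)$: in the triangulated world one only has \emph{weak} cokernels, so the ``easy diagram argument'' that gives associativity and the product descriptions in the abelian case must be replaced by explicit octahedra, and one must be careful that the identifications of $q_1 I_2$ with an object of $\iii_2$ (and of $q_1q_2$ with $q_2 q_1$) are canonical and not merely ``up to non-functorial isomorphism''. I expect the cleanest route is to phrase everything via the colocalization functors $\iota_t$ and the characterization $\iii_1\ast\iii_2 = \langle \iii_1\cup\iii_2\rangle$ noted just before the proposition, reducing all three implications to commuting the two colocalizations — which is exactly the content of \cite{rouquier} and \cite{murfet2}, so in the final write-up this proposition should largely be a translation of those references into the present notation.
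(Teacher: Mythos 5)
The paper cites Proposition \ref{propcomp} to \cite{rouquier} and \cite{murfet2} without including a proof, so there is no in-paper argument to compare against; what follows assesses your attempt on its own terms. Your overall strategy --- set up the colocalization/localization pair $(\iota_t,q_t)$ with the triangle $\iota_t T\to T\to q_t T\to$, characterize $\iii_t$ and $\iii_t^\perp$ via these functors, and run a cyclic implication --- is the right one and matches the cited references in spirit.

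There is, however, a genuine error in $(1)\Rightarrow(2)$. The rotated localization triangle $I_2\to q_1 I_2\to \iota_1 I_2[1]\to$ already shows $q_1 I_2\in\iii_2\ast\iii_1$, so a triangle of the form $I_2'\to q_1 I_2\to I_1'\to$ with $I_2'\in\iii_2$, $I_1'\in\iii_1$ is available \emph{for free} and does not use $(1)$ at all (take $I_2'=I_2$, $I_1'=\iota_1 I_2[1]$; the connecting map $\iota_1 I_2\to I_2$ is typically nonzero, so this triangle certainly does not split). What $(1)$ must supply is the \emph{other} containment, $q_1 I_2\in\iii_1\ast\iii_2$, producing a triangle $I_1'\to q_1 I_2\to I_2'\to I_1'[1]$ with $I_1'\in\iii_1$, $I_2'\in\iii_2$; then $q_1 I_2\in\iii_1^\perp$ gives $\Hom(I_1',q_1 I_2)=0$, so the \emph{first} map is zero, whence $I_2'\cong q_1 I_2\oplus I_1'[1]$, and thickness of $\iii_2$ gives $q_1 I_2\in\iii_2$. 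Your invocation of $\Hom(I_1'[-1],q_1 I_2)=0$ to split the triangle $I_2'\to q_1 I_2\to I_1'\to$ is not valid: the obstruction to that splitting is the connecting map $I_1'\to I_2'[1]$, an element of $\Hom(\iii_1,\iii_2[1])$, which $q_1 I_2\perp\iii_1$ does not control. The detour through $a_1 I_2\cong a_1 I_2'$ is circular (it merely says $q_1$ agrees on two objects of $\iii_2$) and can be dropped.

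The step $(2)\Rightarrow(3)$ is only sketched, and the crux --- that $q_1 q_2 T$ lies in $\iii_2^\perp$, so that it satisfies the universal property of the $(\iii_1\ast\iii_2)^\perp$-localization --- is not a formal consequence of $(2)$; it genuinely requires the octahedral bookkeeping you allude to. (Applying $\Hom(\iii_2,-)$ to $\iota_1 X\to X\to q_1 X$ for $X\in\iii_2^\perp$ only yields $\Hom(\iii_2,q_1 X)\cong\Hom(\iii_2,\iota_1 X[1])$, which does not obviously vanish.) Note that $(3)\Rightarrow(2)$ is essentially free --- for $I_2\in\iii_2$ one has $q_2 q_1 I_2 = q_1 q_2 I_2 = q_1(0)=0$, hence $q_1 I_2\in\Kern(q_2)=\iii_2$ --- and $(2)\Rightarrow(1)$ is short (apply $q_2$ to a triangle exhibiting $T\in\iii_1\ast\iii_2$ to find $q_2 T\cong q_2 I_1\in\iii_1$, then use $\iota_2 T\to T\to q_2 T\to$). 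Structuring the proof around $(1)\Leftrightarrow(2)$, $(3)\Rightarrow(2)$, and the single hard direction $(2)\Rightarrow(3)$ would localize the remaining work and make it clear exactly which step needs to be extracted from \cite{rouquier} or \cite{murfet2}.
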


In the situation of Proposition \ref{propcomp}, we speak about \emph{properly intersecting} localizations.
A collection of thick subcategories (or localizations) is called \emph{properly intersecting} if the localizations are pairwise properly intersecting.

\begin{definition}\label{defcovertria}
A collection $\Theta$ of full subcategories of $\ttt$ is called a \emph{covering} of $\ttt$ if
$$\bigcap \Theta = \bigcap_{\iii \in \Theta} \iii
 = 0.$$
\end{definition}

\begin{remark}
In \cite{rouquier}, the term cocovering is reserved for a collection of Bousfield subcategories which is covering in the sense of Definition \ref{defcovertria} and properly intersecting.
\end{remark}

By Definition \ref{defcovertria}, for a covering collection $\Theta$ of thick subcatories, the collection of quotient functors $a: \ttt \lra \ttt/ \iii$ with $\iii \in \Theta$ ``generates'' $\ttt$ in the sense that $T \in \ttt$ is non-zero if and only if $a(T)$ is non-zero for some $\iii \in \Theta$.

\begin{proposition}
Consider a collection $\Theta$ of Bousfield subcategories of $\ttt$. The following are equivalent:
\begin{enumerate}
\item $\Theta$ is a covering of $\ttt$.
\item The objects $i(D)$ for $D \in \ttt/\iii$ and $\iii \in \Theta$ cogenerate $\ttt$, i.e an object $T$ in $\ttt$ is non-zero if and only if there exists a non-zero morphism $T \lra i(D)$ with $D \in \ttt/\iii$ for some $\iii \in \Theta$.
\end{enumerate}
\end{proposition}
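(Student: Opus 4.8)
The plan is to deduce this directly from the defining adjunction $a \dashv i$ of each Bousfield localization, in exact parallel with the abelian statement proved just before Section 2.2 (whose proof was the single line ``this easily follows from the adjunction between $a$ and $i$''). The ingredients I would use are, for each $\iii \in \Theta$ with associated localization $(\iii, a, i)$: that $\Kern(a) = \iii$, so that $a(T) = 0$ if and only if $T \in \iii$; the natural isomorphism $\Hom_{\ttt}(T, i(D)) \cong \Hom_{\ttt/\iii}(a(T), D)$ coming from $a \dashv i$; and the fact that $a i \cong 1_{\ttt/\iii}$. I would first observe that one half of (2) is trivial: a non-zero morphism $T \lra i(D)$ immediately forces $T \neq 0$. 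So the real task is to match condition (1) against the assertion that every non-zero object $T$ admits a non-zero morphism into some $i(D)$.

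For (1) $\Rightarrow$ (2), given $T \neq 0$ I would use $\bigcap_{\iii \in \Theta} \iii = 0$ to pick $\iii \in \Theta$ with $T \notin \iii$, so that $a(T) \neq 0$ in $\ttt/\iii$; then take $D = a(T)$ and check that the unit morphism $\eta_T \colon T \lra i a(T) = i(D)$ is non-zero. Indeed, applying $a$ turns $\eta_T$ into the isomorphism $a(\eta_T)\colon a(T) \to a i a(T)$, which is non-zero because $a(T) \neq 0$; hence $\eta_T \neq 0$. For (2) $\Rightarrow$ (1), given $T \in \bigcap_{\iii \in \Theta} \iii$, for every $\iii \in \Theta$ we have $a(T) = 0$, whence $\Hom_{\ttt}(T, i(D)) \cong \Hom_{\ttt/\iii}(0, D) = 0$ for all $D \in \ttt/\iii$; so no object $i(D)$ receives a non-zero morphism from $T$, and (2) forces $T = 0$. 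Thus $\bigcap_{\iii \in \Theta} \iii = 0$, i.e.\ $\Theta$ is a covering.

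I do not expect a genuine obstacle: the argument is a purely formal consequence of the adjunctions, just as in the abelian case, and the symmetry between the two implications is complete. The only point deserving a line of care is to note that the family $\{\, i(D) : D \in \ttt/\iii,\ \iii \in \Theta \,\}$ is stable under shifts — so that ``cogenerate'' is used in its correct triangulated sense — which holds since each $i$ is exact and each $\ttt/\iii$ is closed under shifts, so that $i(D)[n] = i(D[n])$ is again a member of the family.
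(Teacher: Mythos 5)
Your argument is correct and is exactly the argument the paper leaves implicit behind its one-line proof ``This easily follows from the adjunction between $a$ and $i$'': the direction $(2)\Rightarrow(1)$ uses $\Hom_\ttt(T,i(D))\cong\Hom_{\ttt/\iii}(a(T),D)$ with $a(T)=0$ for $T\in\bigcap\iii$, and the direction $(1)\Rightarrow(2)$ uses the unit $\eta_T\colon T\to ia(T)$, which is non-zero since $a(\eta_T)$ is an isomorphism by the triangle identity together with the counit being an isomorphism (fully faithful $i$). The closing remark about shift-stability of the family $\{i(D)\}$ is harmless but unnecessary, since condition (2) is stated as a self-contained equivalence rather than invoking an external notion of cogeneration.
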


\begin{proof}
This easily follows from the adjunction between $a$ and $i$.
\end{proof}

\subsection{Induced coverings}

Given the formal parallellism between sections \ref{parcoverab} and \ref{parcovertria}, and the fact that a localization 
$a: \ccc \lra \ddd$
with right adjoint
$i: \ddd \lra \ccc$
of Grothendieck categories
gives rise to an induced
Bousfield localization
$$La = a: D(\ccc) \lra D(\ddd)$$
with fully faithful right adjoint $$Ri: D(\ddd) \lra D(\ccc)$$
of the corresponding derived categories, it is natural to ask what happens to the notions of compatibility and coverings under this operation of taking derived categories.

For coverings, the situation is very simple.
For a full subcategory $\sss$ of $\ccc$, let $D_{\sss}(\ccc)$ denote the full subcategory of $D(\ccc)$ consisting of complexes whose cohomology lies in $\sss$. 

\begin{lemma}
Let $a: \ccc \lra \ddd$ be an exact functor between Grothendieck categories with $\Kern(a) = \sss$, and consider $La = a: D(\ccc) \lra D(\ddd)$. We have $\Kern(La) = D_{\sss}(\ccc)$.
\end{lemma}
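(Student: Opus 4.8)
The plan is to prove both inclusions $\Kern(La) = D_{\sss}(\ccc)$ directly, using the fact that $a: \ccc \to \ddd$ is exact. Since $a$ is exact, for every complex $M^{\bullet} \in D(\ccc)$ we have $H^n(La(M^{\bullet})) = H^n(a(M^{\bullet})) \cong a(H^n(M^{\bullet}))$ for all $n$; this commutation of an exact functor with cohomology is the single fact that does all the work. Given this, $La(M^{\bullet}) = 0$ in $D(\ddd)$ if and only if $H^n(a(M^{\bullet})) = 0$ for all $n$, if and only if $a(H^n(M^{\bullet})) = 0$ for all $n$, if and only if $H^n(M^{\bullet}) \in \Kern(a) = \sss$ for all $n$, which is precisely the statement that $M^{\bullet} \in D_{\sss}(\ccc)$.

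First I would record the identification $La = a$ at the level of the (unbounded) derived category: since $a$ is exact, it does not need to be derived, and it sends acyclic complexes to acyclic complexes, hence descends to a functor $D(\ccc) \to D(\ddd)$ which is the composite of $a$ applied termwise with the localization functors, and which is the left adjoint of $Ri$ as noted in the excerpt. Then I would spell out the cohomology computation: for a complex $M^{\bullet}$, the object $a(M^{\bullet})$ has $n$-th term $a(M^n)$ with differentials $a(d^n)$, and because $a$ is exact it preserves kernels, images and cokernels, so $H^n(a(M^{\bullet})) = \ker(a(d^n))/\operatorname{im}(a(d^{n-1})) \cong a(\ker d^n)/a(\operatorname{im} d^{n-1}) \cong a(H^n(M^{\bullet}))$, the last isomorphism again by exactness applied to $0 \to \operatorname{im} d^{n-1} \to \ker d^n \to H^n(M^{\bullet}) \to 0$.

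With this in hand the two inclusions are immediate. If $M^{\bullet} \in D_{\sss}(\ccc)$, then each $H^n(M^{\bullet}) \in \sss = \Kern(a)$, so $a(H^n(M^{\bullet})) = 0$, hence $H^n(La(M^{\bullet})) = 0$ for all $n$, so $La(M^{\bullet}) = 0$ and $M^{\bullet} \in \Kern(La)$. Conversely if $M^{\bullet} \in \Kern(La)$, then $H^n(La(M^{\bullet})) = 0$ for all $n$, so $a(H^n(M^{\bullet})) = 0$, i.e. $H^n(M^{\bullet}) \in \Kern(a) = \sss$ for all $n$, which means $M^{\bullet} \in D_{\sss}(\ccc)$.

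I do not anticipate a genuine obstacle here; the only point requiring a modicum of care is the justification that $a$, being exact, commutes with cohomology on \emph{unbounded} complexes, and that the induced functor on $D(\ccc)$ really is computed termwise (rather than via some resolution) — both of which are standard, the former because exactness is a finitary condition on each cohomology object and the latter because exact functors preserve quasi-isomorphisms. One should also make sure $D_{\sss}(\ccc)$ is understood as in the excerpt, namely all complexes (not necessarily bounded) whose cohomology objects lie in $\sss$, so that the equivalences above range over all $n \in \Z$.
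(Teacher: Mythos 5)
Your proof is correct and uses exactly the paper's argument: the entire content is the observation that $a$, being exact, commutes with cohomology, so $H^n(a(X)) \cong a(H^n(X))$, and both inclusions fall out immediately. The paper records this in one line; you have simply spelled out the standard details.
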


\begin{proof}
For a complex $X \in D(\ccc)$, we have $H^n(a(X)) = a(H^n(X))$.
\end{proof}

\begin{lemma}
For a collection $\Sigma$ of full subcategories of a Grothendieck category $\ccc$, we have
$$D_{\bigcap_{\sss \in \Sigma}\sss}(\ccc) = \bigcap_{\sss \in \Sigma} D_{\sss}(\ccc).$$
\end{lemma}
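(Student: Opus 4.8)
The plan is to prove the set-theoretic identity $D_{\bigcap_{\sss \in \Sigma}\sss}(\ccc) = \bigcap_{\sss \in \Sigma} D_{\sss}(\ccc)$ of full subcategories of $D(\ccc)$ by a direct unwinding of the definition of $D_{\sss}(\ccc)$ as the full subcategory of complexes whose cohomology objects lie in $\sss$. Since both sides are full subcategories of $D(\ccc)$, it suffices to check that a complex $X \in D(\ccc)$ belongs to one side exactly when it belongs to the other.

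The argument is essentially a quantifier swap. First I would observe that, for any object $C \in \ccc$, one has $C \in \bigcap_{\sss \in \Sigma} \sss$ if and only if $C \in \sss$ for every $\sss \in \Sigma$; this is just the definition of intersection of subcategories. Then, for $X \in D(\ccc)$, the following are equivalent: $X \in D_{\bigcap_{\sss}\sss}(\ccc)$; for every $n \in \Z$, $H^n(X) \in \bigcap_{\sss \in \Sigma}\sss$; for every $n$ and every $\sss \in \Sigma$, $H^n(X) \in \sss$; for every $\sss \in \Sigma$ and every $n$, $H^n(X) \in \sss$; for every $\sss \in \Sigma$, $X \in D_{\sss}(\ccc)$; and finally $X \in \bigcap_{\sss \in \Sigma} D_{\sss}(\ccc)$. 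The only subtlety, and it is a mild one, is the interchange of the two universal quantifiers over $n$ and over $\sss$, which is trivially valid. One should also note that $D_{\bigcap_{\sss}\sss}(\ccc)$ makes sense because $\bigcap_{\sss \in \Sigma}\sss$ is again a full subcategory of $\ccc$ (closed under the relevant operations, being an intersection of such), so the notation $D_{(-)}(\ccc)$ is applicable to it; no hypothesis that the $\sss$ be Serre or localizing is needed for the statement itself.

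There is really no main obstacle here: the lemma is a formality whose only content is that the definition of $D_{\sss}(\ccc)$ is ``pointwise'' in the cohomology objects, so intersections of such subcategories correspond to conjunctions of membership conditions on cohomology. If one wanted to be careful, the single point worth stating explicitly is that a full subcategory of $D(\ccc)$ is determined by its class of objects, so equality of the two object-classes yields equality of the subcategories. I would therefore present the proof in one or two lines: for $X \in D(\ccc)$, $X \in \bigcap_{\sss}D_{\sss}(\ccc)$ iff $H^n(X) \in \sss$ for all $n$ and all $\sss \in \Sigma$ iff $H^n(X) \in \bigcap_{\sss}\sss$ for all $n$ iff $X \in D_{\bigcap_{\sss}\sss}(\ccc)$, which is exactly the claimed identity.
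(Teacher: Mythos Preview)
Your proposal is correct. The paper actually states this lemma without proof, treating it as immediate from the definition of $D_{\sss}(\ccc)$; your one-line quantifier-swap argument is exactly the intended justification.
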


\begin{proposition}\label{propindcover}
Let $\Sigma$ be a collection of full subcategories of a Grothendieck category $\ccc$. Then $\Sigma$ is a covering of $\ccc$ if and only if the collection $\{ D_{\sss}(\ccc) \,\, |\,\, \sss \in \Sigma\}$ is a covering of $D(\ccc)$.
\end{proposition}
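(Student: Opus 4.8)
The plan is to deduce this directly from the two preceding lemmas rather than arguing from scratch. By the intersection lemma one has
$$\bigcap_{\sss \in \Sigma} D_{\sss}(\ccc) = D_{\bigcap_{\sss \in \Sigma} \sss}(\ccc),$$
so, writing $\ttt = \bigcap_{\sss \in \Sigma} \sss$, the assertion reduces to the single equivalence
$$\ttt = 0 \iff D_{\ttt}(\ccc) = 0.$$
Thus the work is entirely concentrated in comparing a full subcategory of $\ccc$ with the associated subcategory of $D(\ccc)$ cut out by a cohomology condition.

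For the forward implication, I would observe that if $\ttt = 0$ then any $X \in D_{\ttt}(\ccc)$ has $H^n(X) = 0$ for all $n$, hence $X$ is acyclic, i.e. $X \cong 0$ in $D(\ccc)$; therefore $D_{\ttt}(\ccc) = 0$. For the converse, suppose $\ttt$ contains an object $C$ not isomorphic to $0$. Viewing $C$ as a complex concentrated in degree $0$, its cohomology is $C$ in degree $0$ and $0$ elsewhere, so this complex lies in $D_{\ttt}(\ccc)$; and it is nonzero in $D(\ccc)$ since $H^0$ does not vanish. Hence $D_{\ttt}(\ccc) \neq 0$, and the contrapositive gives the claim.

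I do not expect any real obstacle here: the genuine content has already been isolated in the two lemmas above, namely the compatibility of $D_{(-)}(\ccc)$ with intersections and the fact that cohomology is computed objectwise, so that a complex is zero in $D(\ccc)$ precisely when all its cohomology objects vanish. The only point needing a word of care is that the notion of covering presupposes that the zero object lies in each member of $\Sigma$, so that $\ttt$ is a bona fide (zero) subcategory in the first direction and the stalk-complex argument is legitimate in the second; this is part of the standing conventions on full subcategories closed under isomorphism, and could be made explicit if desired.
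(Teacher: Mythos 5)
Your proof is correct and is essentially the argument the paper leaves implicit: the preceding lemma reduces the statement to the equivalence $\bigcap_{\sss}\sss = 0 \iff D_{\bigcap_{\sss}\sss}(\ccc) = 0$, which you verify by the acyclicity/stalk-complex observations. The paper gives no written proof of this proposition precisely because this reduction is immediate, so there is nothing genuinely different here.
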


Now consider a Grothendieck category $\ccc$ and  localizations
$a_k: \ccc \lra \ddd_k$
with right adjoints $i_k$, $q_k = i_k a_k$, and $\sss_k = \Kern(a_k)$ for $k \in \{ 1, 2\}$. 

Taking derived functors yields Bousfield localizations $La_k = a_k: D(\ccc) \lra D(\ccc/\sss_k)$
with right adjoints $Ri_k: D(\ccc/\sss_k) \lra D(\ccc)$ and $\Kern(La_k) = D_{\sss_k}(\ccc)$.

We have the following inclusion between thick subcategories:
\begin{lemma}
We have $D_{\sss_1}(\ccc) \ast D_{\sss_2}(\ccc) \subseteq D_{\sss_1 \ast \sss_2}(\ccc)$.
\end{lemma}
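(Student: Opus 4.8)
The plan is to unwind both ``$\ast$''-products at the level of cohomology. Take $X \in D_{\sss_1}(\ccc)$ and $Y \in D_{\sss_2}(\ccc)$ together with a distinguished triangle
$$X \lra T \lra Y \lra X[1]$$
exhibiting a typical object $T \in D_{\sss_1}(\ccc) \ast D_{\sss_2}(\ccc)$. I must show $H^n(T) \in \sss_1 \ast \sss_2$ for every $n$, i.e. that each $H^n(T)$ fits into a short exact sequence with left term in $\sss_1$ and right term in $\sss_2$. Applying cohomology to the triangle yields the long exact sequence
$$\cdots \lra H^n(X) \lra H^n(T) \lra H^n(Y) \lra H^{n+1}(X) \lra \cdots.$$

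Next I would factor the map $H^n(X) \lra H^n(T)$ through its image $A^n = \Beeld(H^n(X) \lra H^n(T))$ and the map $H^n(T) \lra H^n(Y)$ through its image $B^n = \Beeld(H^n(T) \lra H^n(Y))$. Exactness of the long exact sequence says precisely that $A^n = \Kern(H^n(T) \lra H^n(Y))$, so there is a short exact sequence $0 \lra A^n \lra H^n(T) \lra B^n \lra 0$ in $\ccc$. It then remains to locate $A^n$ and $B^n$: since $A^n$ is a quotient of $H^n(X) \in \sss_1$ and a Serre subcategory is closed under subquotients, $A^n \in \sss_1$; since $B^n$ is a subobject of $H^n(Y) \in \sss_2$, likewise $B^n \in \sss_2$. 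By definition of the Gabriel product this gives $H^n(T) \in \sss_1 \ast \sss_2$, and since $n$ was arbitrary, $T \in D_{\sss_1 \ast \sss_2}(\ccc)$.

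I do not expect any genuine obstacle here; the only point worth stating carefully is that the definitions of the Verdier and Gabriel products only demand the \emph{existence} of one triangle (resp. one short exact sequence) with the prescribed outer terms, so a single short exact sequence for each $H^n(T)$ suffices, and the long exact cohomology sequence supplies it for free.
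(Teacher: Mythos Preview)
Your argument is correct and is exactly the approach taken in the paper: pass to the long exact cohomology sequence of the triangle and use closure of $\sss_1$ and $\sss_2$ under subquotients to extract a short exact sequence $0 \lra S_1 \lra H^n(T) \lra S_2 \lra 0$ with $S_k \in \sss_k$. The paper's version is simply more terse; you have spelled out the image factorisation that the paper leaves implicit.
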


\begin{proof}
A triangle $X_1 \lra X \lra X_2 \lra$ with $X_k \in D_{\sss_k}(\ccc)$ gives rise to a long exact sequence $\dots \lra H^nX_1 \lra H^nX \lra H^nX_2 \lra \dots$. Since $\sss_k$ is closed under subquotients, we obtain an exact sequence $0 \lra S_1 \lra H^nX \lra S_2 \lra 0$ with $S_k \in \sss_k$.
\end{proof}

In general, we have:

\begin{proposition}
If $D_{\sss_1}(\ccc)$ and $D_{\sss_2}(\ccc)$ are compatible in $D(\ccc)$, then $\sss_1$ and $\sss_2$ are compatible in $\ccc$.
\end{proposition}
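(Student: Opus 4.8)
The plan is to trace a short exact sequence through the derived category and back, using a single long exact cohomology sequence. Since the hypothesis is symmetric in $\sss_1$ and $\sss_2$, it suffices to show $\sss_1 \ast \sss_2 \subseteq \sss_2 \ast \sss_1$. So let $C \in \sss_1 \ast \sss_2$, witnessed by a short exact sequence $0 \lra S_1 \lra C \lra S_2 \lra 0$ with $S_k \in \sss_k$. First I would view $S_1$, $C$, $S_2$ as complexes concentrated in degree $0$; then this short exact sequence gives a distinguished triangle $S_1 \lra C \lra S_2 \lra S_1[1]$ in $D(\ccc)$, and since $S_k$ has cohomology $S_k$ in degree $0$ and $0$ elsewhere, $S_k \in D_{\sss_k}(\ccc)$. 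Hence $C \in D_{\sss_1}(\ccc) \ast D_{\sss_2}(\ccc)$.

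Next I would apply the hypothesis that $D_{\sss_1}(\ccc)$ and $D_{\sss_2}(\ccc)$ intersect properly: it gives $C \in D_{\sss_2}(\ccc) \ast D_{\sss_1}(\ccc)$, i.e. a distinguished triangle $X_2 \lra C \lra X_1 \lra X_2[1]$ with $H^n(X_2) \in \sss_2$ and $H^n(X_1) \in \sss_1$ for all $n$. The crucial point is that $C$ is concentrated in degree $0$: the long exact cohomology sequence of this triangle, together with $H^n(C) = 0$ for $n \neq 0$, collapses to the exact sequence
$$0 \lra H^{-1}(X_1) \lra H^0(X_2) \lra C \lra H^0(X_1) \lra H^1(X_2) \lra 0$$
(the remaining terms only produce isomorphisms $H^n(X_1) \cong H^{n+1}(X_2)$, which are not needed).

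Finally I would splice this sequence. Set $E = \Cokern(H^{-1}(X_1) \lra H^0(X_2))$; the map $H^{-1}(X_1) \lra H^0(X_2)$ is mono, so $E$ is a quotient of $H^0(X_2) \in \sss_2$, whence $E \in \sss_2$ as $\sss_2$ is Serre. Set $F = \Beeld(C \lra H^0(X_1))$; being a subobject of $H^0(X_1) \in \sss_1$, it lies in $\sss_1$. Exactness of the six-term sequence identifies $E$ with $\Kern(C \lra H^0(X_1))$ and $C/E$ with $F$, giving a short exact sequence $0 \lra E \lra C \lra F \lra 0$, so $C \in \sss_2 \ast \sss_1$. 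This proves $\sss_1 \ast \sss_2 \subseteq \sss_2 \ast \sss_1$; the reverse inclusion follows by symmetry, so $\sss_1 \ast \sss_2 = \sss_2 \ast \sss_1$.

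I expect the only real content to be the middle step: concentrating $C$ in degree $0$ forces the a priori unbounded complexes $X_1$ and $X_2$ to interact only through a short exact cohomology sequence around degree $0$, and everything else is bookkeeping with the Serre conditions (closure of $\sss_2$ under quotients, of $\sss_1$ under subobjects). There is no serious obstacle; one should merely note that ``intersecting properly'' is defined for arbitrary full subcategories, so the hypothesis applies verbatim to $D_{\sss_1}(\ccc)$ and $D_{\sss_2}(\ccc)$ without these having to be thick.
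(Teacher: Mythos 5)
Your proof is correct, but it takes a genuinely different route from the paper's. You work directly with characterization (1) of compatibility in Propositions \ref{propcomp0} and \ref{propcomp} -- the Gabriel/Verdier product equalities -- unwinding a triangle $X_2 \to C \to X_1 \to$ in $D(\ccc)$ through the long exact cohomology sequence and splicing the resulting six-term sequence at degree $0$. The paper instead uses characterization (2) (the condition $q_1(\sss_2) \subseteq \sss_2$ and $q_2(\sss_1) \subseteq \sss_1$ in terms of the localization functors $q_k = i_k a_k$), and the implication reduces via Lemma \ref{lemkey0} to the one-line observation that for $S_2 \in \sss_2$ one has $i_1 a_1(S_2) = H^0(Ri_1 a_1(S_2))$, which lies in $\sss_2$ as soon as $Ri_1 a_1(S_2) \in D_{\sss_2}(\ccc)$. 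The trade-off: your argument is more self-contained (it uses only the defining $\ast$-product formulations, not the equivalence with the adjoint-functor characterization), at the cost of the cohomology bookkeeping; the paper's argument is shorter but leans on the equivalence of the two characterizations. Both proofs exploit the same essential asymmetry (closure of $\sss_2$ under quotients, of $\sss_1$ under subobjects, with the $H^0$ of an a priori unbounded complex doing the work), so the underlying mechanism is the same even though the packaging differs.
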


\begin{proof}
Immediate from Lemma \ref{lemkey0} and the characterizations (2) in Propositions \ref{propcomp0} and \ref{propcomp}. \end{proof}

\begin{lemma} \label{lemkey0}
If $Ri_1 a_1(D_{\sss_2}(\ccc)) \subseteq D_{\sss_2}(\ccc)$, then $i_1 a_1(\sss_2) \subseteq \sss_2$.
\end{lemma}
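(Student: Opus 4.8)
The plan is to prove Lemma \ref{lemkey0} by evaluating the hypothesis on objects of $\ccc$ viewed as complexes concentrated in degree $0$. First I would take an arbitrary $S_2 \in \sss_2$ and regard it as a stalk complex in $D_{\sss_2}(\ccc)$, since its only nonzero cohomology is $S_2 \in \sss_2$. By hypothesis, $Ri_1a_1(S_2) \in D_{\sss_2}(\ccc)$, i.e.\ all cohomology objects $H^n(Ri_1a_1(S_2))$ lie in $\sss_2$. The key point is then to identify $H^0(Ri_1a_1(S_2))$: for a localization of Grothendieck categories, the underived composite $q_1 = i_1a_1$ is left exact (being right adjoint to the exact localization $a_1$ composed with \dots; more precisely $i_1$ is left exact and $a_1$ is exact), so $H^0(Ri_1a_1(S_2)) = q_1(S_2) = i_1a_1(S_2)$. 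Hence $i_1a_1(S_2) \in \sss_2$, which is exactly the conclusion $i_1a_1(\sss_2) \subseteq \sss_2$.

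The one subtlety I would want to pin down carefully is the computation of the zeroth cohomology of $Ri_1a_1$ applied to an object in degree $0$. Since $a_1$ is exact, $a_1(S_2)$ is again a stalk complex (equal to $a_1(S_2)$ in degree $0$), and $Ri_1$ applied to a stalk complex $M$ in degree $0$ has $H^n(Ri_1 M) = R^n i_1(M)$, the classical right derived functors of the left-exact functor $i_1$; in particular $H^0 = i_1(M)$. Thus $H^0(Ri_1a_1(S_2)) = i_1(a_1(S_2)) = q_1(S_2)$. This is a completely standard fact about derived functors of left-exact functors between abelian categories with enough injectives (Grothendieck categories qualify), so the argument is short; the main thing is simply to make sure the derived-category bookkeeping is stated in the right direction (that $Ri_1$ is the total right derived functor and its $H^0$ recovers the underived $i_1$ on objects).

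I do not anticipate a real obstacle here: the lemma is essentially the observation that passing to derived categories can only add information in higher degrees, so a containment of essential images of the derived functors forces the containment of the underived ones in degree $0$. The only care needed is that $\sss_2$ is closed under the relevant operation (it is, being a Serre subcategory, hence closed under subobjects, so even if one only extracted a subquotient of $q_1(S_2)$ one would still land in $\sss_2$ — but in fact we get $q_1(S_2)$ on the nose). I would conclude by noting that this lemma, together with the dual statement for the index $2$ and Lemma \ref{lemkey0}'s preceding display, feeds directly into the proof of the preceding Proposition via characterization (2) of Propositions \ref{propcomp0} and \ref{propcomp}.
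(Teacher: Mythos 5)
Your proof is correct and follows essentially the same route as the paper's: both arguments evaluate the hypothesis on the stalk complex $S_2$ and extract $i_1a_1(S_2) = R^0 i_1 a_1(S_2) = H^0(Ri_1a_1(S_2))$, which lies in $\sss_2$ because $Ri_1a_1(S_2)\in D_{\sss_2}(\ccc)$. The paper states this in one line; you simply spell out the justification that $H^0$ of the right derived functor of a left-exact functor recovers the underived functor on objects.
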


\begin{proof}
Let $S_2 \in \sss_2$. We have $i_1 a_1 (S_2) = R^0i_1 a_1 (S_2) = H^0 Ri_1 a_1 (S_2)$ and since $Ri_1 a_1 (S_2) \in D_{\sss_2}(\ccc)$, it follows that $i_1 a_1 (S_2) \in \sss_2$ as desired.
\end{proof}

Unfortunately, the converse implication does not hold in general. However, in the special situation where $i_1$ and $i_2$ are exact, it is equally straightforward.

\begin{definition}
A localization $a: \ccc \lra \ddd$ is called \emph{affine} if the right adjoint $i: \ddd \lra \ccc$ is exact.  A localizing subcategory $\sss \subseteq \ccc$ is called \emph{affine} if the corresponding localization $\ccc \lra \ccc/\sss$ is affine.
\end{definition}

\begin{example}
If $X$ is a quasi-compact scheme and $U \subset X$ an affine open subscheme, then $\Qch(X) \lra \Qch(U)$ is an affine localization. 
\end{example}

\begin{remark}
Other variants of ``affineness'' have been used in the non-commutative algebraic geometry literature. For instance, Paul Smith calls an inclusion functor $i: \ddd \lra \ccc$ affine if it has both adjoints. In particular, if $i: \ddd \lra \ccc$ is the right adjoint in a localization, it becomes a forteriori exact. In this paper, we have chosen the most convenient  notion of ``affineness'' for our purposes.
\end{remark}

\begin{proposition}\label{compint}
If $\sss_1$ and $\sss_2$ are compatible and affine, then $D_{\sss_1}(\ccc)$ and $D_{\sss_2}(\ccc)$ are compatible. 
\end{proposition}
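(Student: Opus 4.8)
The plan is to check the criterion of Proposition \ref{propcomp} for the induced Bousfield localizations $La_k = a_k \colon D(\ccc) \lra D(\ccc/\sss_k)$ with fully faithful right adjoints $Ri_k$; recall from the lemmas above that $\Kern(La_k) = D_{\sss_k}(\ccc)$. Writing $\tilde{q}_k = Ri_k \circ La_k \colon D(\ccc) \lra D(\ccc)$, it suffices by characterization (2) of that proposition, applied to the localizations $(D_{\sss_k}(\ccc), La_k, Ri_k)$, to establish
$$\tilde{q}_1\big(D_{\sss_2}(\ccc)\big) \subseteq D_{\sss_2}(\ccc) \qquad\text{and}\qquad \tilde{q}_2\big(D_{\sss_1}(\ccc)\big) \subseteq D_{\sss_1}(\ccc),$$
and by symmetry it is enough to treat the first containment.

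The point where affineness enters is the computation of $\tilde{q}_1$ at the level of complexes. Since $a_1$ is exact, $La_1$ is the degreewise application of $a_1$; since $i_1$ is exact by the affineness hypothesis, it preserves quasi-isomorphisms, so $Ri_1$ is likewise computed as the degreewise application of $i_1$, with no resolution needed. Hence $\tilde{q}_1$ is simply the degreewise application of the exact functor $q_1 = i_1 a_1$, and therefore $H^n(\tilde{q}_1 X) \cong q_1(H^n X)$ for every $X \in D(\ccc)$ and every $n$.

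Now take $X \in D_{\sss_2}(\ccc)$, so that $H^n X \in \sss_2$ for all $n$. By the previous paragraph $H^n(\tilde{q}_1 X) \cong q_1(H^n X)$, and since $\sss_1$ and $\sss_2$ are compatible, characterization (2) of Proposition \ref{propcomp0} gives $q_1(\sss_2) \subseteq \sss_2$; hence $H^n(\tilde{q}_1 X) \in \sss_2$ for all $n$, i.e. $\tilde{q}_1 X \in D_{\sss_2}(\ccc)$. The symmetric argument gives the second containment, and Proposition \ref{propcomp} then shows that $D_{\sss_1}(\ccc)$ and $D_{\sss_2}(\ccc)$ intersect properly. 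Combined with the Proposition preceding this one, this yields, in the affine case, a full equivalence between compatibility of $\sss_1,\sss_2$ in $\ccc$ and compatibility of $D_{\sss_1}(\ccc), D_{\sss_2}(\ccc)$ in $D(\ccc)$.

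There is no real obstacle here: the only step requiring a moment's care is the identification of $Ri_1$ with the degreewise application of $i_1$, which rests on the fact that an exact functor between abelian categories preserves quasi-isomorphisms and so can be derived without resolving. One should also keep in mind that each pair $(La_k, Ri_k)$ is genuinely a Bousfield localization of $D(\ccc)$, as recorded in the discussion opening this subsection, so that Propositions \ref{propcomp0} and \ref{propcomp} apply in the form used above.
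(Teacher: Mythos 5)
Your proof is correct and follows essentially the same route as the paper: the paper invokes its Lemma \ref{lemkey}, whose proof is precisely the observation you make — exactness of $i_1$ lets one compute $Ri_1 a_1$ degreewise, so cohomology is transformed by $q_1 = i_1 a_1$ and stays in $\sss_2$ by compatibility — and then concludes via criterion (2) of Propositions \ref{propcomp0} and \ref{propcomp}. You have in effect inlined that lemma into the proof, but the argument is the same.
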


\begin{proof}
Immediate from Lemma \ref{lemkey} and the characterizations (2) in Propositions \ref{propcomp0} and \ref{propcomp}.
\end{proof}

\begin{lemma}\label{lemkey}
If $i_1a_1(\sss_2) \subseteq \sss_2$ and $i_1$ is exact, then $Ri_1 a_1(D_{\sss_2}(\ccc)) \subseteq D_{\sss_2}(\ccc)$.
\end{lemma}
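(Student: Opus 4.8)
The plan is to exploit the exactness hypotheses to reduce the derived statement to an underived one. First I would observe that the Gabriel quotient $a_1 \colon \ccc \lra \ccc/\sss_1$ is automatically exact, so its derived functor $La_1 = a_1$ is computed by applying $a_1$ termwise to a complex; and by hypothesis $i_1$ is exact, so likewise $Ri_1 = i_1$ is computed termwise. (No fibrant/cofibrant replacements are needed here: an exact functor between abelian categories preserves quasi-isomorphisms, hence descends directly to the level of derived categories, even unbounded ones.) Consequently, for any complex $X \in D(\ccc)$, the object $Ri_1 a_1(X)$ is simply obtained by applying the exact functor $q_1 = i_1 a_1$ degreewise to $X$.

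Next I would use exactness of $q_1$ to commute it with cohomology: $H^n(Ri_1 a_1(X)) = H^n(q_1 X) = q_1(H^n X)$ for every $n$. Now take $X \in D_{\sss_2}(\ccc)$, so that $H^n(X) \in \sss_2$ for all $n$. Then $H^n(Ri_1 a_1(X)) = q_1(H^n X) = i_1 a_1(H^n X) \in i_1 a_1(\sss_2) \subseteq \sss_2$, where the last inclusion is exactly the hypothesis. Hence $Ri_1 a_1(X) \in D_{\sss_2}(\ccc)$, which is the desired conclusion.

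There is essentially no serious obstacle: the only point requiring a moment's care is the identification $Ri_1 a_1 = q_1$ (applied termwise) as endofunctors of the derived category, which rests on the standard fact that an exact functor between abelian categories needs no derived correction. Everything else is a one-line computation with cohomology of complexes together with the compatibility hypothesis $i_1 a_1(\sss_2) \subseteq \sss_2$.

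\begin{proof}
Since $a_1$ is exact and $i_1$ is exact by assumption, the composite $q_1 = i_1 a_1$ is exact, and $Ri_1 a_1 = Ri_1 \circ La_1$ is computed by applying $q_1$ termwise to a complex. Hence for $X \in D(\ccc)$ we have $H^n(Ri_1a_1(X)) = H^n(q_1 X) = q_1(H^nX)$. If $X \in D_{\sss_2}(\ccc)$, then $H^nX \in \sss_2$, so $q_1(H^nX) = i_1a_1(H^nX) \in i_1a_1(\sss_2) \subseteq \sss_2$ for all $n$, whence $Ri_1a_1(X) \in D_{\sss_2}(\ccc)$.
\end{proof}
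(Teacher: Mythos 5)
Your proof is correct and matches the paper's argument exactly: both identify $Ri_1a_1$ with the exact functor $i_1a_1$ (using exactness of $i_1$), commute it past cohomology, and then apply the hypothesis $i_1a_1(\sss_2)\subseteq\sss_2$ degreewise. There is nothing to add.
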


\begin{proof}
For a complex $X \in D_{\sss_2}(\ccc)$, we have $Ri_1 a_1(X) = i_1 a_1 (X)$ and since $i_1 a_1$ is exact, $H^n(i_1 a_1 (X)) = i_1 a_1 (H^n(X)) \in \sss_2$.
\end{proof}

The affineness condition in Proposition \ref{compint} does not describe the only situation where compatible abelian localizations give rise to compatible Bousfield localizations, but it is the only situation we will need in this paper. To end this section, we will describe another situation, inspired by the behaviour of large categories of sheaves of modules.

Recall that the functor $i : \ccc/\sss \lra \ccc$ has finite cohomological dimension if there exists an $N \in \Z$ such that 
if $X \in D(\ccc/\sss)$ has $H^n(X) = 0$ for $n > 0$, then $R^n i(X) = H^n(Ri (X)) = 0$ for $n \geq N$.

\begin{proposition}\label{propfin}
Let $\sss_1$ and $\sss_2$ be compatible and suppose the following conditions hold:
\begin{enumerate}
\item There exist a class of objects $\aaa \subseteq \ccc$ and classes $\aaa_k \subseteq \ccc/ \sss_k$ consisting of $i_k$-acyclic objects such that $a_k(\aaa) \subseteq \aaa_k$ and $i_k(\aaa_k) \subseteq \aaa$.
\item The functors $i_k$ have finite cohomological dimension.
\end{enumerate}
Then $D_{\sss_1}(\ccc)$ and $D_{\sss_2}(\ccc)$ are compatible.
\end{proposition}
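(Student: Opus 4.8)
The plan is to reduce, by Proposition \ref{propcomp} applied inside $D(\ccc)$, to a statement about the single functor $Ri_1$, and then to prove that statement by a derived base-change argument for which conditions (1) and (2) supply exactly the required input.

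Recall that the induced Bousfield localizations $La_k=a_k\colon D(\ccc)\lra D(\ccc/\sss_k)$ have right adjoints $Ri_k$ and satisfy $\Kern(La_k)=D_{\sss_k}(\ccc)$, so that the functor ``$q_k$'' attached to them in $D(\ccc)$ is $Ri_k La_k$. By the characterization (2) in Proposition \ref{propcomp}, applied to $\iii_k=D_{\sss_k}(\ccc)$, together with the symmetry $1\leftrightarrow 2$ of the hypotheses, it suffices to show that $Ri_1 La_1$ maps $D_{\sss_2}(\ccc)$ into itself. Since $a_1$ is exact, $La_1$ carries $D_{\sss_2}(\ccc)$ into $D_{\eee}(\ccc/\sss_1)$, where $\eee=a_1(\sss_2)$ is the (Serre, by compatibility) essential image of $\sss_2$ in $\ccc/\sss_1$; hence it is enough to show $Ri_1(D_{\eee}(\ccc/\sss_1))\subseteq D_{\sss_2}(\ccc)$. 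By condition (2) the functor $Ri_1$ has finite cohomological amplitude, so, using truncations and the second hyperderived-functor spectral sequence $R^pi_1(H^q(Y))\Rightarrow H^{p+q}(Ri_1 Y)$ (which converges, and in each total degree has only finitely many contributing terms, by the amplitude bound), together with the fact that $\sss_2$ is closed under subquotients and extensions, this reduces to: $R^pi_1(E)\in\sss_2$ for every object $E\in\eee$ and every $p\ge 0$, equivalently $a_2(R^pi_1(E))=0$, i.e. $La_2\circ Ri_1$ annihilates $\eee$ (and hence all of $D_{\eee}(\ccc/\sss_1)$, again by the spectral sequence).

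To prove this I would identify $La_2\circ Ri_1$. Writing $\ccc_1=\ccc/\sss_1$, $\ccc_2=\ccc/\sss_2$ and $\ccc_{12}=\ccc/(\sss_1\ast\sss_2)$ for the three localizations of Section~\ref{pardescent}, with $b_2\colon\ccc_1\lra\ccc_{12}$ the localization with kernel $\eee$ and $j_1\colon\ccc_{12}\lra\ccc_2$ its analogue for $\sss_2$, compatibility of $\sss_1$ and $\sss_2$ yields a Beck--Chevalley isomorphism $a_2i_1\cong j_1b_2$ at the abelian level. The substance of the proof is to derive this isomorphism: using condition (1) one represents an object of $D(\ccc_1)$ by a complex with terms in $\aaa_1$ (these are $i_1$-acyclic, their $i_1$-images lie in $\aaa$, hence their $a_2i_1$-images lie in $\aaa_2$), one checks the matching assertion on the $\ccc_{12}$-side with the acyclic classes induced on $\ccc_{12}$ from $\aaa$, and concludes $La_2\circ Ri_1\cong Rj_1\circ b_2$ on all of $D(\ccc_1)$; here condition (2) guarantees the boundedness that lets these identifications pass to arbitrary, possibly unbounded, complexes. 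Granting this, for $E=a_1(S)\in\eee$ with $S\in\sss_2$ one has $b_2(E)=b_2a_1(S)=b_1a_2(S)=0$ since $S\in\sss_2=\Kern(a_2)$, whence $La_2Ri_1(E)\cong Rj_1(b_2E)=Rj_1(0)=0$, which is what was needed.

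The main obstacle is precisely this derived Beck--Chevalley step. At the level of abelian categories the square of compatible localizations commutes and satisfies base change, but deriving it requires a single class of objects adapted simultaneously to all the functors $i_1$, $a_2$, $b_2$, $j_1$ and their composites, and control of the resulting hyperderived spectral sequences; conditions (1) and (2) are tailored for exactly this, the first providing the adapted class (closed enough under the $a_k$ and $i_k$ to compute $Ri_1$ and transport it through $a_2$), the second keeping everything bounded and convergent. In the guiding situation $\ccc=\Qch(X)$, $\ccc/\sss_k=\Qch(U_k)$ with $\aaa$ the flasque quasi-coherent sheaves, this reduces to the classical facts that flasque sheaves are acyclic for direct image and that restriction preserves flasqueness, so that higher direct images of sheaves supported on a closed subset remain so supported.
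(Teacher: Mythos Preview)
The paper does not supply a proof of this proposition; it is stated and immediately followed by the sheaf example, so there is no argument to compare against. Your overall strategy---reduce via Proposition~\ref{propcomp}(2) to $Ri_1\,a_1(D_{\sss_2}(\ccc))\subseteq D_{\sss_2}(\ccc)$, then via the hypercohomology spectral sequence (legitimized by condition~(2)) to $R^pi_1(E)\in\sss_2$ for $E\in\eee$, and finally conclude by a derived Beck--Chevalley identity $La_2\circ Ri_1\cong Rj_1\circ b_2$---is correct and natural.

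There is, however, a genuine gap in the way you invoke condition~(1). You write that ``using condition~(1) one represents an object of $D(\ccc_1)$ by a complex with terms in $\aaa_1$''; but condition~(1) as stated says nothing about $\aaa_1$ being large enough to resolve objects of $\ccc_1$. It only asserts that $\aaa_1$ consists of $i_1$-acyclics and is stable in the stated way under $a_1$ and $i_1$. So the resolution step is not justified by the hypothesis.

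The gap is easily repaired, and in fact the repair shows condition~(1) is not needed at all. Use injective resolutions instead. The abelian Beck--Chevalley $a_2 i_1\cong j_1 b_2$ holds purely from compatibility: for $Y\in\ccc_1$ one has $i_2 j_1 b_2(Y)=q_{12}(i_1 Y)=q_2 q_1(i_1 Y)=q_2(i_1 Y)=i_2 a_2 i_1(Y)$, and $i_2$ is fully faithful. Now Gabriel quotients preserve injectives (an injective splits as its torsion part plus a closed injective complement), so for $E\in\eee$ with injective resolution $E\to I^\bullet$ in $\ccc_1$ the complex $b_2(I^\bullet)$ is an acyclic bounded-below complex of injectives in $\ccc_{12}$, hence contractible, and therefore $a_2 i_1(I^\bullet)\cong j_1 b_2(I^\bullet)\simeq 0$. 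This gives $a_2\,R^p i_1(E)=H^p(a_2 i_1(I^\bullet))=0$, i.e.\ $R^p i_1(E)\in\sss_2$. The passage to unbounded $Y\in D_\eee(\ccc_1)$ then follows from condition~(2) via the truncation/spectral-sequence argument you outline, since $\sss_2$ is closed under subquotients and extensions.
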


\begin{example}
Let $X$ be a quasi-compact scheme with quasi-compact open subschemes $j_1: U_1 \lra X$ and $j_2: U_2 \lra X$. We have restriction functors $j_k^{\ast}: \Mod(X) \lra \Mod(U_k)$ between the categories of all sheaves of modules with right adjoints $i_{k, \ast}: \Mod(U_k) \lra \Mod(X)$ with finite cohomological dimension. In Proposition \ref{propfin}, we can take for $\aaa$ and $\aaa_k$ the classes of flabby sheaves. Hence the localizations $j^{\ast}_k: D(\Mod(X)) \lra D(\Mod(U_k))$ are compatible. 

\end{example}

\subsection{Rouquier's Theorem}
Compactly generated triangulated categories were invented by Neeman \cite{neeman2} with the compact generation of derived categories of ``nice'' schemes as one of the principal motivations. 
As proved in \cite{neeman2}, for a scheme with a collection of ample invertible sheaves, these sheaves constitute a collection of compact generators of the derived category. But also in \cite{neeman2}, a totally different proof of compact generation is given for arbitrary quasi-compact separated schemes. The result is further improved by Bondal and Van den Bergh in \cite{bondalvandenbergh}, where a single compact generator is constructed for quasi-compact semi-separated schemes. These proofs are by induction on the opens in a finite affine cover, and the ingredients eventually cristalized in Rouquier's theorem \cite{rouquier} which is entirely expressed in terms of a cover of a triangulated category. Finally, in \cite{murfet2}, Murfet  obtained a version of the theorem with compactness replaced by $\alpha$-compactness. We start by recalling the theorem.

\begin{theorem}\cite{murfet2}\label{cocovertria}
Let $\ttt$ be a triangulated category with a compatible covering of Bousfield subcategories $\iii_i \subseteq \ttt$ for $i \in I = \{1, \dots, n\}$. Let $\alpha$ be a regular cardinal. Suppose:
\begin{enumerate}
\item $\ttt/ \iii_i$ is $\alpha$-compactly generated for every $i \in I$.
\item For every $i \in I$ and $\varnothing \neq J \subseteq I \setminus \{i\}$, the essential image of
$$\cap_{j \in J} \iii_j \lra \ttt \lra \ttt/ \iii_i$$
is $\alpha$-compactly generated in $\ttt/ \iii_i$.  
\end{enumerate}
Then $\ttt$ is $\alpha$-compactly generated, and an object in $\ttt$ is $\alpha$-compact if and only if its image in every $\ttt/ \iii_i$ for $i \in I$ is $\alpha$-compact.
\end{theorem}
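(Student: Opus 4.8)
This is Rouquier's cocovering theorem in the $\alpha$-compact form of Murfet \cite{rouquier,murfet2}, so the plan is to follow their line of argument. I would induct on $n=|I|$, proving both assertions simultaneously: that $\ttt$ is $\alpha$-compactly generated, and that $X\in\ttt$ is $\alpha$-compact if and only if $a_iX$ is $\alpha$-compact in $\ttt/\iii_i$ for every $i$. Here $a_i\colon\ttt\to\ttt/\iii_i$ is the quotient, with fully faithful right adjoint $b_i$, and for $\varnothing\neq J\subseteq I$ I write $\iii_J=\iii_{j_1}\ast\cdots\ast\iii_{j_p}$ (a Bousfield subcategory, by the calculus of properly intersecting Bousfield subcategories in \cite{rouquier}), $a_J\colon\ttt\to\ttt/\iii_J$, and $b_J$ its right adjoint. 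The base case $n=1$ is immediate, since the covering condition forces $\iii_1=0$ and hence $\ttt\cong\ttt/\iii_1$.

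The first step of the inductive step produces $\alpha$-compact generators of the Bousfield subcategory $\iii_n$, by restricting the covering to it. The subcategories $\iii_n\cap\iii_i$ for $i<n$ are Bousfield in $\iii_n$ (a consequence of the compatibility of $\iii_i$ and $\iii_n$) and form a properly intersecting covering of $\iii_n$, because $\bigcap_{i\le n}\iii_i=0$. I would verify hypotheses (1) and (2) of the theorem for this covering of size $n-1$: the quotient $\iii_n/(\iii_n\cap\iii_i)$ embeds fully faithfully in $\ttt/\iii_i$ as the localizing subcategory generated by the essential image of $\iii_n\to\ttt\to\ttt/\iii_i$, which by hypothesis (2) applied with $J=\{n\}$ is $\alpha$-compactly generated in $\ttt/\iii_i$, so $\iii_n/(\iii_n\cap\iii_i)$ is $\alpha$-compactly generated; and for $\varnothing\neq J'\subseteq(I\setminus\{n\})\setminus\{i\}$ one has $\bigcap_{j\in J'}(\iii_n\cap\iii_j)=\bigcap_{j\in J'\cup\{n\}}\iii_j$, whose essential image in $\iii_n/(\iii_n\cap\iii_i)\hookrightarrow\ttt/\iii_i$ is the one provided by hypothesis (2) for the set $J'\cup\{n\}$ --- which is exactly why hypothesis (2) has to be assumed for all nonempty $J$, not only for singletons. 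By the inductive hypothesis, $\iii_n$ is $\alpha$-compactly generated, say by a set $\GGG$, and each $G\in\GGG$ has $a_iG$ $\alpha$-compact in $\ttt/\iii_i$ for all $i<n$; since $a_nG=0$, this holds for all $i\in I$. Relabelling, the same applies to every $\iii_i$.

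The core is a triangulated descent computation followed by a gluing step. Because $\bigcap_i\iii_i=0$, every $Y\in\ttt$ is built by finitely many triangles from the localizations $b_Ja_JY$, $\varnothing\neq J\subseteq I$ (a Mayer--Vietoris, or \v{C}ech, presentation). Applying $\Hom_\ttt(X,-)$ to this presentation with $Y=\coprod_\lambda Y_\lambda$, and using the adjunction $\Hom_\ttt(X,b_Ja_J(-))\cong\Hom_{\ttt/\iii_J}(a_JX,a_J(-))$ together with the fact that each $a_J$ preserves coproducts, one reduces the claim ``$X$ is $\alpha$-compact in $\ttt$'' to the claim ``$a_JX$ is $\alpha$-compact in $\ttt/\iii_J$ for every $J$''. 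When $a_iX$ is $\alpha$-compact in $\ttt/\iii_i$ for all $i$, the latter holds because the localization $\ttt/\iii_i\to\ttt/\iii_J$ (for $i\in J$) preserves $\alpha$-compactness: it is the localization of the $\alpha$-compactly generated category $\ttt/\iii_i$ at a finite Verdier product of the essential images of the $\iii_j\to\ttt\to\ttt/\iii_i$ ($j\in J$), which by hypothesis (2) (applied with $i$ and the singletons $\{j\}$) are $\alpha$-compactly generated in $\ttt/\iii_i$, so that Verdier product is generated by objects $\alpha$-compact in $\ttt/\iii_i$ and Neeman's localization criterion \cite{neeman2} applies. This proves the ``if'' half of the compactness criterion for $\ttt$; in particular the generating sets $\GGG$ consist of objects $\alpha$-compact in $\ttt$. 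Next, in the recollement $\iii_n\to\ttt\to\ttt/\iii_n$, I would show that each $\alpha$-compact generator $H$ of $\ttt/\iii_n$ (hypothesis (1)) admits an $\alpha$-compact lift $\tilde H\in\ttt$ with $H$ a direct summand of $a_n\tilde H$, obtained by a finite approximation starting from $b_nH$ and repeatedly coning off maps from objects of $\GGG$: the ``error'' at each stage lives in $\iii_n=\Kern(a_n)$ and is reached by $\GGG$, and $|I|<\infty$ bounds the number of stages; since $\GGG$ consists of $\alpha$-compacts of $\ttt$, the lift $\tilde H$ is $\alpha$-compact. Then $\GGG$ together with these lifts forms an $\alpha$-generating family of $\ttt$: if $W\in\ttt$ has localization triangle $\Gamma_nW\to W\to b_na_nW$ relative to $\iii_n$, then $\Hom_\ttt(\GGG[\ast],W)$ factors through $\Gamma_nW$ (since $b_na_nW\in\iii_n^\perp$), so its vanishing forces $\Gamma_nW=0$, i.e. $W\cong b_na_nW$, after which $\Hom_\ttt(\tilde H[\ast],W)\cong\Hom_{\ttt/\iii_n}(a_n\tilde H[\ast],a_nW)$ by adjunction, so its vanishing forces $a_nW=0$, hence $W=0$. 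Thus $\ttt$ is $\alpha$-compactly generated, and since each $\iii_i$ is generated by $\alpha$-compacts of $\ttt$, each $a_i$ is smashing and preserves $\alpha$-compactness, giving the ``only if'' half of the criterion.

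The step I expect to be the main obstacle is this descent-plus-gluing core: making the triangulated Mayer--Vietoris presentation over a finite cover precise, proving that the iterated localization functors preserve $\alpha$-compactness and are smashing --- this is where hypotheses (1) and (2) enter essentially, being exactly the input Neeman's criterion requires at each vertex of the \v{C}ech diagram --- and, most delicately, the finite approximation that lifts the compact generators of $\ttt/\iii_n$ across $a_n$, where one must break the apparent circularity between ``$\ttt$ is $\alpha$-compactly generated'' and ``lifts exist'' by producing the lifts directly from the already-established $\alpha$-compact generation of $\iii_n$ and $\ttt/\iii_n$. Throughout, one must track the cardinality bookkeeping attached to $\alpha$ --- essential smallness of the candidate class of $\alpha$-compacts, and preservation of $\alpha$-compactness under the $\alpha$-small homotopy colimits occurring in the approximation. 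By contrast, the reductions of the first two paragraphs and the verification of the covering and compatibility conditions for the restricted families are formal, resting only on the Verdier-product calculus recalled in the preceding sections.
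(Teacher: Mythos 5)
The paper does not prove Theorem~\ref{cocovertria}. It is stated and attributed to Murfet~\cite{murfet2} (with the $\alpha=\aleph_0$ case credited to Rouquier~\cite{rouquier}) and used as a black box to derive Theorem~\ref{cocovergroth}. So there is no proof of the paper's to compare your attempt against; the most I can do is assess your sketch against the published arguments it is modeled on.

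With that caveat: your outline does track the broad structure of Rouquier's and Murfet's proofs. Induction on $|I|$ with the trivial base $\iii_1=0$, restriction of the cocovering to $\iii_n$ and inductive verification of the hypotheses there (the identification of $\iii_n/(\iii_n\cap\iii_i)$ with the Bousfield subcategory of $\ttt/\iii_i$ generated by the image of $\iii_n$, which you correctly flag as resting on proper intersection and as the point where hypothesis~(2) for non-singleton $J$ is indispensable), and then a gluing step in the recollement $\iii_n\to\ttt\to\ttt/\iii_n$ combining the $\alpha$-compact generators of $\iii_n$ with lifts of the $\alpha$-compact generators of $\ttt/\iii_n$ --- all of this is the right skeleton. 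The places you explicitly identify as the main obstacles (making the triangulated Mayer--Vietoris/\v{C}ech presentation precise, the cardinality and smashing bookkeeping needed for $\alpha$-compactness, and the circularity-breaking finite approximation that lifts compact generators across $a_n$) are indeed exactly where the real work is done in the published proofs. One imprecision worth noting: the claim that ``$|I|<\infty$ bounds the number of stages'' in the lifting argument is not how the published finite approximation goes --- the bound there comes from a more delicate countable (or $\alpha$-small) approximation in the Thomason--Neeman style, not directly from the size of the cover --- and the assertion that the localizations $a_i$ become smashing at the end is a consequence one must derive, not a free byproduct. But as a blind blueprint, the proposal captures the right decomposition and the right inductive strategy; it is a credible reconstruction of the \cite{rouquier}/\cite{murfet2} argument rather than an alternative route.
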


\begin{remark}
The $\alpha = \aleph_0$-case of the theorem is Rouquier's cocovering theorem \cite{rouquier}.
\end{remark}

We now obtain the following application to Grothendieck categories:

\begin{theorem}\label{cocovergroth}
Let $\ccc$ be a Grothendieck category with a compatible covering of affine localizing subcategories $\sss_i \subseteq \ccc$ for $i \in I = \{1, \dots, n\}$. Suppose:
\begin{enumerate}
\item $D(\ccc/\sss_i)$ is $\alpha$-compactly generated for every $i \in I$.
\item For every $i \in I$ and $\varnothing \neq J \subseteq I \setminus \{i\}$, suppose the essential image $\eee$  of
$$\cap_{j \in J} \sss_j \lra \ccc \lra \ccc/ \sss_i$$
is such that $D_{\eee}(\ccc/\sss_i)$ is compactly generated in $D(\ccc/ \sss_i)$.
\end{enumerate}
Then $D(\ccc)$ is $\alpha$-compactly generated, and an object in $D(\ccc)$ is $\alpha$-compact if and only if its image in every $D(\ccc/ \sss_i)$ for $i \in I$ is $\alpha$-compact.
\end{theorem}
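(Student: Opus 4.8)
The plan is to deduce Theorem~\ref{cocovergroth} from Murfet's triangulated cocovering theorem (Theorem~\ref{cocovertria}) applied to $\ttt = D(\ccc)$ with the collection of thick subcategories $\iii_i = D_{\sss_i}(\ccc)$ for $i \in I$. First I would check that each $\iii_i$ is a Bousfield subcategory of $D(\ccc)$: since $\sss_i$ is localizing, $a_i\colon \ccc \lra \ccc/\sss_i$ derives to a Bousfield localization $La_i = a_i\colon D(\ccc) \lra D(\ccc/\sss_i)$ with fully faithful right adjoint $Ri_i$, and by the lemma preceding Proposition~\ref{propindcover} we have $\Kern(La_i) = D_{\sss_i}(\ccc) = \iii_i$; hence $\iii_i$ is Bousfield and $D(\ccc)/\iii_i \simeq D(\ccc/\sss_i)$. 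Next, the covering hypothesis: since $\{\sss_i\}_{i\in I}$ is a covering of $\ccc$, Proposition~\ref{propindcover} gives that $\{D_{\sss_i}(\ccc)\}_{i\in I}$ is a covering of $D(\ccc)$, i.e. $\bigcap_i \iii_i = 0$. For the compatibility (proper intersection): since the $\sss_i$ are compatible \emph{and affine}, Proposition~\ref{compint} gives that $D_{\sss_i}(\ccc)$ and $D_{\sss_j}(\ccc)$ intersect properly for all $i,j$; so $\{\iii_i\}$ is a compatible covering of Bousfield subcategories, and the standing hypotheses of Theorem~\ref{cocovertria} are met.

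It then remains to translate the two numbered conditions. Condition~(1) is immediate: $D(\ccc)/\iii_i \simeq D(\ccc/\sss_i)$ is $\alpha$-compactly generated by hypothesis~(1) of the theorem. For condition~(2), fix $i$ and $\varnothing \neq J \subseteq I\setminus\{i\}$. Using affineness and compatibility I would identify, inside $D(\ccc)$, the intersection $\bigcap_{j\in J}\iii_j$ with $D_{\sss_J}(\ccc)$ where $\sss_J = \sss_{j_1}\ast\cdots\ast\sss_{j_p}$ — this uses the lemma $D_{\bigcap\sss}(\ccc)=\bigcap D_{\sss}(\ccc)$ together with $\bigcap_{j\in J}\sss_j \subseteq \sss_J$ and a dévissage showing cohomology in $\sss_J$ is forced; more carefully one works with $\bigcap_{j}\sss_j$ directly. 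The essential image of $\bigcap_{j\in J}\iii_j \lra D(\ccc) \lra D(\ccc/\sss_i) = D(\ccc)/\iii_i$ is then, by exactness of $a_i$ and the fact that $a_i$ takes cohomology objects in $\bigcap_{j\in J}\sss_j$ to cohomology objects in $\eee$ (the essential image of $\bigcap_{j\in J}\sss_j \lra \ccc \lra \ccc/\sss_i$), contained in $D_{\eee}(\ccc/\sss_i)$; conversely any object of $D_{\eee}(\ccc/\sss_i)$ is hit, by lifting cohomology objects one layer at a time along $\bigcap_{j\in J}\sss_j \twoheadrightarrow \eee$. So the essential image coincides with $D_{\eee}(\ccc/\sss_i)$, which is compactly (hence $\alpha$-compactly) generated in $D(\ccc/\sss_i)$ by hypothesis~(2). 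Thus condition~(2) of Theorem~\ref{cocovertria} holds, and that theorem yields that $D(\ccc)$ is $\alpha$-compactly generated with the stated characterization of $\alpha$-compact objects, completing the proof.

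I expect the main obstacle to be the precise identification in condition~(2): namely showing that the essential image of $\bigcap_{j\in J}D_{\sss_j}(\ccc) \lra D(\ccc/\sss_i)$ is exactly $D_{\eee}(\ccc/\sss_i)$, not merely contained in it. The containment is formal from exactness of $a_i$ and the lemma on intersections of $D_\sss$; the reverse requires a dévissage argument lifting a complex with cohomology in $\eee$ to one with cohomology in $\bigcap_{j\in J}\sss_j$ and then verifying the image is isomorphic to the original — here one uses that $a_i$ restricted to $\bigcap_{j\in J}\sss_j$ surjects (essentially) onto $\eee$, and truncation triangles to assemble the lift. A minor secondary point is bookkeeping with $\alpha$ versus $\aleph_0$: hypothesis~(2) only asks for ordinary compact generation of $D_{\eee}(\ccc/\sss_i)$, which is a fortiori $\alpha$-compact generation for any regular $\alpha \geq \aleph_0$, so feeding it into Theorem~\ref{cocovertria} is harmless.
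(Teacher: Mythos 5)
Your overall strategy matches the paper's exactly: apply Theorem~\ref{cocovertria} to $\ttt = D(\ccc)$ with $\iii_i = D_{\sss_i}(\ccc)$, invoking Proposition~\ref{propindcover} for the covering, Proposition~\ref{compint} for proper intersection, and the identification of the essential image of $\bigcap_{j\in J}\iii_j$ inside $D(\ccc/\sss_i)$ with $D_\eee(\ccc/\sss_i)$. This last identification is precisely the content of the paper's Lemma~\ref{lemess}, which the paper also states without supplying a proof.

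Two points where your write-up should be tightened. First, the detour through $\sss_J = \sss_{j_1} \ast \cdots \ast \sss_{j_p}$ is a red herring: the unnamed lemma in \S 2.4 directly gives $\bigcap_{j\in J} D_{\sss_j}(\ccc) = D_{\bigcap_{j\in J}\sss_j}(\ccc)$, and $\bigcap_{j\in J}\sss_j$ is what matters here (note $\bigcap_j \sss_j \subseteq \sss_J$, but $D_{\sss_J}(\ccc)$ is strictly larger than $\bigcap_j \iii_j$ in general, so identifying $\bigcap_j \iii_j$ with $D_{\sss_J}(\ccc)$ as you first suggest is wrong). You do hedge by saying ``more carefully one works with $\bigcap_j\sss_j$ directly'', which is the right call — simply drop $\sss_J$.

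Second, for the reverse containment $D_\eee(\ccc/\sss_i) \subseteq$ (essential image of $\bigcap_j \iii_j$), your proposed dévissage is both more delicate than needed (unbounded complexes require homotopy colimits of truncations, and one has to verify the image really agrees with the given $X$) and, more importantly, fails to exploit the affineness hypothesis, which is exactly what makes this direction easy. The clean argument: by compatibility, $i_i a_i(\bigcap_{j\in J}\sss_j) \subseteq \bigcap_{j\in J}\sss_j$, i.e. $i_i(\eee) \subseteq \bigcap_{j\in J}\sss_j$; since $\sss_i$ is affine, $i_i$ is exact, so $Ri_i = i_i$ is computed termwise and $H^n(i_i X) = i_i(H^n X) \in \bigcap_{j\in J}\sss_j$ for $X \in D_\eee(\ccc/\sss_i)$. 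Thus $Ri_i\bigl(D_\eee(\ccc/\sss_i)\bigr) \subseteq D_{\bigcap_{j\in J}\sss_j}(\ccc) = \bigcap_{j\in J}\iii_j$, and since $La_i \circ Ri_i \cong \mathrm{id}$, every $X \in D_\eee(\ccc/\sss_i)$ is in the essential image. This is where affineness enters the proof (beyond Proposition~\ref{compint}), and your dévissage sidesteps it unnecessarily.
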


\begin{proof}
This is an application of Theorem \ref{cocovertria} by invoking Propositions \ref{propindcover} and \ref{compint} and Lemma \ref{lemess}.
\end{proof}

\begin{lemma}\label{lemess}
With the notations of Theorem \ref{cocovergroth}, $\cap_{j \in J} \sss_j$ is a localizing Serre subcategory which is compatible with $\sss_i$, and the essential image $\eee$ of
$$\cap_{j \in J} \sss_j \lra \ccc \lra \ccc/ \sss_i$$
is a localizing Serre subcategory given by the kernel of
$$\ccc/\sss_i \lra \ccc/(\sss_i \ast \cap_{j\in J} \sss_j).$$
The essential image of 
$$\cap_{j \in J}D_{\sss_j}(\ccc) \lra D(\ccc) \lra D(\ccc/\sss_i)$$
is given by $D_{\eee}(\ccc/\sss_i)$.
\end{lemma}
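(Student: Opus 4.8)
The plan is to prove the three assertions in turn; write $\bbb := \bigcap_{j \in J}\sss_j$ and $q_i := i_i a_i$, and recall that $\sss_i$ is affine, so $i_i$ is exact. That $\bbb$ is a localizing Serre subcategory is immediate, since closure under subquotients, under extensions and under coproducts is in each case stable under intersection. The real content of the first assertion is that $\bbb$ is compatible with $\sss_i$, i.e.\ by Proposition~\ref{propcomp0} that $\sss_i \ast \bbb = \bbb \ast \sss_i$, and I would obtain this from the pairwise compatibilities $\sss_i \ast \sss_j = \sss_j \ast \sss_i$ ($j \in J$). Indeed, if $M \in \sss_i \ast \bbb$ then $\bbb \subseteq \sss_j$ and monotonicity of the Gabriel product give $M \in \sss_i \ast \sss_j = \sss_j \ast \sss_i$ for each $j$, hence a subobject $T_j \subseteq M$ with $T_j \in \sss_j$ and $M/T_j \in \sss_i$; then $Q := \bigcap_{j \in J} T_j$ lies in every $\sss_j$, so in $\bbb$, while $M/Q$ embeds in the finite coproduct $\bigoplus_{j \in J} M/T_j$ of objects of $\sss_i$, so $M/Q \in \sss_i$ and $M \in \bbb \ast \sss_i$. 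The reverse inclusion is symmetric, using subobjects $S_j \subseteq M$ with $S_j \in \sss_i$, $M/S_j \in \sss_j$ and the sum $P := \sum_{j \in J} S_j \in \sss_i$, whose cokernel $M/P$ is a quotient of each $M/S_j$ and hence lies in $\bbb$.

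For the second assertion, granting compatibility, $\sss_i \ast \bbb$ is again localizing Serre: closed under subquotients by a direct diagram chase, under coproducts since coproducts are exact, and under extensions because associativity of $\ast$ and $\bbb \ast \sss_i = \sss_i \ast \bbb$ give $(\sss_i \ast \bbb) \ast (\sss_i \ast \bbb) = \sss_i \ast \sss_i \ast \bbb \ast \bbb = \sss_i \ast \bbb$. Since $\sss_i \subseteq \sss_i \ast \bbb$, the Gabriel correspondence between localizing Serre subcategories of $\ccc/\sss_i$ and those of $\ccc$ containing $\sss_i$ sends $\sss_i \ast \bbb$ to the localizing Serre subcategory $a_i(\sss_i \ast \bbb)$ of $\ccc/\sss_i$, with $(\ccc/\sss_i)/a_i(\sss_i \ast \bbb) \cong \ccc/(\sss_i \ast \bbb)$. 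It remains to identify $a_i(\sss_i \ast \bbb)$ with the essential image $\eee$: one containment is $\bbb \subseteq \sss_i \ast \bbb$, and conversely for $M \in \sss_i \ast \bbb$ with $0 \to S \to M \to B \to 0$, $S \in \sss_i$, $B \in \bbb$, exactness of $a_i$ and $a_i(S) = 0$ give $a_i(M) \cong a_i(B)$. Hence $\eee = a_i(\sss_i \ast \bbb) = \Kern(\ccc/\sss_i \to \ccc/(\sss_i \ast \bbb))$.

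For the last assertion, $\bigcap_{j \in J} D_{\sss_j}(\ccc) = D_{\bbb}(\ccc)$ by the earlier lemma on intersections of the $D_{\sss}(\ccc)$, and the composite under consideration is the restriction of $La_i = a_i$ to $D_{\bbb}(\ccc)$. If $X \in D_{\bbb}(\ccc)$ then $H^n(a_i X) = a_i(H^n X) \in \eee$, so $a_i X \in D_{\eee}(\ccc/\sss_i)$, giving one inclusion. Conversely, for $Y \in D_{\eee}(\ccc/\sss_i)$, exactness of $i_i$ gives $Ri_i = i_i$ and $H^n(i_i Y) = i_i(H^n Y) \in i_i(\eee) = q_i(\bbb)$; and $q_i(\bbb) \subseteq \bbb$ because for each $j$ and each $B \in \bbb \subseteq \sss_j$, compatibility of $\sss_i$ with $\sss_j$ yields $q_i(B) \in \sss_j$. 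Thus $i_i Y \in D_{\bbb}(\ccc)$, and the counit isomorphism $a_i i_i \cong \mathrm{id}$ of exact functors descends to derived categories, so $a_i(i_i Y) \cong Y$; hence $Y$ lies in the essential image, and the two inclusions give $D_{\eee}(\ccc/\sss_i)$.

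The step I expect to be the main obstacle is the compatibility $\sss_i \ast \bbb = \bbb \ast \sss_i$ in the first assertion: it does not follow formally from pairwise compatibility, and the argument genuinely relies on the fact that a finite intersection (respectively sum) of subobjects of $M$ distributed among the $\sss_j$ (respectively lying in $\sss_i$) is again a subobject whose complementary quotient is controlled. Once that is in place, the remaining assertions are bookkeeping with the Gabriel correspondence and with the exactness granted by affineness of $\sss_i$.
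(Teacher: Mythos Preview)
The paper states Lemma~\ref{lemess} without proof, so there is no argument to compare against; your proposal supplies one, and it is correct.

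A few remarks on the substance. Your proof of compatibility of $\sss_i$ with $\bbb=\bigcap_{j\in J}\sss_j$ via the finite intersection $Q=\bigcap_j T_j$ and the finite sum $P=\sum_j S_j$ is the right idea and genuinely uses the finiteness of $J$ (as it must). For the second assertion, the paper already records (just after the definition of compatibility) that for compatible Serre subcategories $\sss_1\ast\sss_2=\langle\sss_1\cup\sss_2\rangle$ is Serre, so your diagram chase for subquotient-closure is not needed, though it does no harm. Your identification $\eee=a_i(\sss_i\ast\bbb)$ via the short exact sequence argument is clean. For the third assertion, the converse inclusion is where affineness is actually used: without exactness of $i_i$ one would not have $H^n(Ri_iY)=i_i(H^nY)$, and the argument would fail. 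You invoke $q_i(\bbb)\subseteq\bbb$ via Proposition~\ref{propcomp0}(2) applied to each $\sss_j$, which is exactly the intended mechanism; note that this step already contains an independent proof of compatibility of $\sss_i$ with $\bbb$ (again by Proposition~\ref{propcomp0}(2)), so in principle one could bypass the Gabriel-product argument of the first paragraph, though the latter gives the stronger identification $\sss_i\ast\bbb=\bbb\ast\sss_i$ needed to describe the quotient in the second assertion.
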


\begin{remark}
By the Gabriel-Popescu theorem, all Grothendieck categories are localizations of module categories, and thus their derived categories are well-generated \cite{neeman} \cite{krause6} (and thus $\alpha$-compactly generated for some $\alpha$) as localizations of compactly generated derived categories of rings. However, they are not necessarily compactly generated as was shown in \cite{neeman3}.
\end{remark}

\begin{remark}
Compatibility between localizations can be considered a commutative phenomenon (after all, it expresses that two localization functors commute). The non-commutative topology developed by Van Oystaeyen \cite{vanoystaeyen} encompasses notions of coverings (and in fact, non-commutative Grothendieck topologies) which apply to the situation of non-commuting localizations. An investigation whether this approach can be extended to the triangulated setup, and whether it is possible to obtain results on compact generation extending Theorems \ref{cocovertria} and \ref{cocovergroth}, is work in progress.
\end{remark}

\section{Deformations}

In this section we obtain an application of Theorem \ref{cocovergroth} to deformations of Grothendieck categories, based upon application to the undeformed categories (Theorem \ref{thmdefcomp}). For simplicity, we focuss on compact generation ($\alpha = \aleph_0$).
By the work of Keller \cite{keller1}, compact generation of the derived category $D(\ccc)$ of a Grothendieck category leads to the existence of a dg algebra $A$ - the derived endomorphism algebra of a generator - representing the category in the sense that $D(\ccc) \cong D(A)$. At this point, most of non-commutative derived algebraic geometry has been developed with dg algebras (or $A_{\infty}$-algebras) as models, although a definitive theory should also include more general algebraic enhancements on the level of the entire categories. For the topic of deformations, a satisfactory treatment on the level of dg algebras does certainly not exist in complete generality \cite{kellerlowen}, due to obstructions which also play an important role in the present paper. 
A deformation theory for triangulated categories on the level of enhancements of the entire categories is still under construction \cite{dedekenlowen2}, and is also subject to obstructions. Thus, Grothendieck enhancements are the only ones for which a satisfactory intrinsic deformation theory exists for the moment, and for this reason our intermediate Theorem \ref{cocovergroth} is crucial.

\subsection{Deformation and localization}

Infinitesimal deformations of abelian categories were introduced in \cite{lowenvandenbergh1}.
We deform along a surjective ringmap $R \lra k$ between coherent commutative rings, with a nilpotent kernel and such that $k$ is finitely presented over $R$. This includes the classical infinitesimal deformation setup in the direction of Artin local $k$-algebras.
Deformations are required to be flat in an appropriate sense, which was introduced in \cite{lowenvandenbergh1}. It was shown in the same paper that deformations of Grothendieck categories remain Grothendieck. The interaction between deformation and localization was treated in \cite[\S 7]{lowenvandenbergh1}.

Let $\iota: \ccc \lra \ddd$ be a deformation of Grothendieck categories. There are inverse bijections between the Serre subcategories of $\ccc$ and the Serre subcategories of $\ddd$ described by the maps
$$\sss \longmapsto \bar{\sss} = \langle \sss \rangle_{\ddd} = \{ D \in \ddd \,\, |\,\, k \otimes_R D \in \sss\}$$
and
$$\sss \longmapsto \sss \cap \ccc.$$
These restrict to bijections between localizing subcategories, and for corresponding localizing subcategories $\sss$ of $\ccc$ and $\bar{\sss}$ of $\ddd$, there is an induced deformation
$\ccc/\sss \lra \ddd/ \bar{\sss}$ and there are commutative diagrams
$$\xymatrix{ {\ddd} \ar[r]^{\bar{a}} & {\ddd/\bar{\sss}} \\ {\ccc} \ar[u] \ar[r]_a & {\ccc/ \sss} \ar[u]} \hspace{2cm}
\xymatrix{ {\ddd} & {\ddd/\bar{\sss}} \ar[l]_{\bar{i}} \\ {\ccc} \ar[u]  & {\ccc/ \sss.} \ar[u] \ar[l]^i}$$

\begin{proposition}\label{liftcover}
Let $\Sigma$ be a collection of Serre subcategories of $\ccc$ and consider the corresponding collection $\bar{\Sigma} = \{ \bar{\sss} \,\, |\,\, \sss \in \Sigma\}$ of Serre subcategories of $\ddd$.
Then $\Sigma$ is a covering of $\ccc$ if and only if $\bar{\Sigma}$ is a covering of $\ddd$.
\end{proposition}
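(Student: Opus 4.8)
The plan is to deduce the statement directly from the two mutually inverse, inclusion-preserving maps $\sss \mapsto \bar\sss$ and $\sss' \mapsto \sss' \cap \ccc$ between Serre subcategories of $\ccc$ and of $\ddd$ recalled above (see \cite{lowenvandenbergh1}), together with the explicit formula $\bar\sss = \{ D \in \ddd \mid k \otimes_R D \in \sss \}$. By definition, $\Sigma$ is a covering of $\ccc$ exactly when $\bigcap_{\sss \in \Sigma} \sss = 0$, while $\bar\Sigma$ is a covering of $\ddd$ exactly when $\bigcap_{\sss \in \Sigma} \bar\sss = 0$; so the whole content is to prove the equivalence of these two vanishing conditions.

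The first step is to check that $\bar{(-)}$ commutes with intersections, namely $\bigcap_{\sss \in \Sigma} \bar\sss = \overline{\bigcap_{\sss \in \Sigma} \sss}$. The right-hand side makes sense because an arbitrary intersection of Serre subcategories is again Serre (closure under subquotients and extensions clearly passes to intersections). The equality is then read off the defining formula: for $D \in \ddd$, one has $D \in \bar\sss$ for all $\sss \in \Sigma$ iff $k \otimes_R D \in \sss$ for all $\sss \in \Sigma$, i.e. iff $k \otimes_R D \in \bigcap_{\sss \in \Sigma} \sss$. The second step is to check that $\bar\ttt = 0 \iff \ttt = 0$ for a Serre subcategory $\ttt \subseteq \ccc$. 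Here ``$\Rightarrow$'' is immediate since $\ttt = \bar\ttt \cap \ccc$. For ``$\Leftarrow$'' one applies the identity $\overline{(\sss' \cap \ccc)} = \sss'$ (valid for every Serre $\sss' \subseteq \ddd$) to $\sss' = 0$ and uses $0 \cap \ccc = 0$ to get $\bar 0 = 0$; equivalently, $k \otimes_R D = 0$ forces $D = 0$, i.e. the reduction functor is conservative.

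Combining the two steps yields the chain of equivalences: $\Sigma$ is a covering $\iff \bigcap_{\sss \in \Sigma} \sss = 0 \iff \overline{\bigcap_{\sss \in \Sigma} \sss} = 0 \iff \bigcap_{\sss \in \Sigma} \bar\sss = 0 \iff \bar\Sigma$ is a covering, where the two inner equivalences are respectively the second step (applied to $\ttt = \bigcap_{\sss \in \Sigma}\sss$) and the first step. I do not anticipate a real obstacle: granted the correspondence of \cite{lowenvandenbergh1}, the argument is formal, the only points needing a word of justification being stability of Serre subcategories under arbitrary intersection and conservativity of $k \otimes_R -$ — the latter itself a formal consequence of $\bar{(-)}$ being a bijection onto \emph{all} Serre subcategories.
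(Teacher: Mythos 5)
Your proof is correct and follows the same route the paper takes: the paper's proof simply cites Lemma \ref{lemcovdef} (that $\ovl{(-)}$ commutes with intersections, which is your first step), leaving implicit the remaining observation that $\ovl{\ttt} = 0 \iff \ttt = 0$, which you correctly supply via the bijection and $\ovl{0 \cap \ccc} = \ovl{0}$. Your write-up is slightly more explicit than the paper's one-line proof but in substance identical.
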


\begin{proof}
Immediate from Lemma \ref{lemcovdef}.
\end{proof}

\begin{lemma}\label{lemcovdef}
Let $\Sigma$ be a collection of Serre subcategories of $\ccc$. We have 
$$\overline{ \cap_{\sss \in \Sigma} \sss} = \cap_{\sss \in \Sigma} \overline{\sss}.$$
\end{lemma}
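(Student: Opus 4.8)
The plan is to unwind the explicit description of the passage $\sss \mapsto \overline{\sss}$ recalled just above the lemma, after which the asserted identity becomes purely set-theoretic. First I would note that an arbitrary intersection of Serre subcategories of $\ccc$ is again a Serre subcategory, so $\cap_{\sss \in \Sigma}\sss$ is a Serre subcategory of $\ccc$ and $\overline{\cap_{\sss \in \Sigma}\sss}$ is defined; likewise $\cap_{\sss \in \Sigma}\overline{\sss}$ is a Serre subcategory of $\ddd$ (being an intersection of such). Hence it suffices to check that these two full subcategories of $\ddd$ have the same objects.

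For this, I would simply chain the defining equivalences: for an object $D$ of $\ddd$,
\begin{align*}
D \in \overline{\cap_{\sss \in \Sigma}\sss}
&\iff k \otimes_R D \in \cap_{\sss \in \Sigma}\sss \\
&\iff k \otimes_R D \in \sss \text{ for all } \sss \in \Sigma \\
&\iff D \in \overline{\sss} \text{ for all } \sss \in \Sigma \\
&\iff D \in \cap_{\sss \in \Sigma}\overline{\sss},
\end{align*}
where the first and third equivalences are instances of the defining formula $\overline{\sss} = \{ D \in \ddd \mid k \otimes_R D \in \sss \}$, and the second and fourth are just the definition of an intersection. There is no genuine obstacle here: the entire content of the lemma is already packaged into the explicit description of the bijection between Serre subcategories of $\ccc$ and of $\ddd$, which is imported from \cite{lowenvandenbergh1}. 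In particular no flatness or exactness property of the reduction functor $k \otimes_R -$ is needed for this equality; such properties would only be invoked for adjacent facts, e.g.\ that $\overline{\sss} = 0$ forces $\sss = 0$, which is what makes Proposition \ref{liftcover} follow from the lemma.
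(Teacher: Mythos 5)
Your proof is correct and is exactly the approach the paper intends: the paper's proof is the one-line remark that the identity is ``immediate from the description of the bijections,'' and you have simply unwound that description $\overline{\sss} = \{ D \in \ddd \mid k \otimes_R D \in \sss \}$ into the evident chain of equivalences. Nothing is missing.
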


\begin{proof}
Immediate from the description of the bijections between Serre subcategories of $\ccc$ and $\ddd$.
\end{proof}

\begin{proposition}\cite[Proposition 3.8]{lowen4}\label{liftcomp}
Let $\sss_k \subseteq \ccc$ be localizing subcategories for $k \in \{ 1,2\}$. If $\sss_1$ and $\sss_2$ are compatible in $\ccc$, then $\ovl{\sss_1}$ and $\ovl{\sss_2}$ are compatible in $\ddd$. In this case, we have $\ovl{\sss_1} \ast \ovl{\sss_2} = \overline{\sss_1 \ast \sss_2}$.
\end{proposition}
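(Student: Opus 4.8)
The plan is to reduce the statement to a single compatibility check for the deformed subcategories, from which the product formula then falls out automatically.

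First, some formal reductions. The assignments $\sss\mapsto\ovl{\sss}$ and $\ovl{\sss}\mapsto\ovl{\sss}\cap\ccc$ are mutually inverse and order-preserving, hence constitute an isomorphism of the (complete) lattices of Serre subcategories of $\ccc$ and of $\ddd$; in particular they preserve joins, so $\langle\ovl{\sss_1}\cup\ovl{\sss_2}\rangle_\ddd=\ovl{\langle\sss_1\cup\sss_2\rangle_\ccc}=\ovl{\sss_1\ast\sss_2}$, using compatibility of $\sss_1,\sss_2$ in $\ccc$ for the last equality. The inclusion $\ovl{\sss_1}\ast\ovl{\sss_2}\subseteq\ovl{\sss_1\ast\sss_2}$ always holds: applying the right exact reduction functor $(-)_0:=k\otimes_R-\colon\ddd\to\ccc$ to a short exact sequence $0\to S_1\to D\to S_2\to 0$ with $S_j\in\ovl{\sss_j}$ presents $D_0$ as an extension of $(S_2)_0\in\sss_2$ by a quotient of $(S_1)_0\in\sss_1$, so $D_0\in\sss_1\ast\sss_2$, i.e. $D\in\ovl{\sss_1\ast\sss_2}$. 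Since, for any pair $\sss',\sss''$ of Serre subcategories, one has $\sss'\ast\sss''=\langle\sss'\cup\sss''\rangle$ exactly when $\sss',\sss''$ are compatible, the remaining inclusion $\ovl{\sss_1\ast\sss_2}\subseteq\ovl{\sss_1}\ast\ovl{\sss_2}$ is \emph{equivalent} to the compatibility of $\ovl{\sss_1}$ and $\ovl{\sss_2}$ in $\ddd$, and it delivers the product formula simultaneously. So everything comes down to proving that $\ovl{\sss_1}$ and $\ovl{\sss_2}$ are compatible.

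For this I would use criterion (2) of Proposition \ref{propcomp0}: it suffices to verify $\bar q_1(\ovl{\sss_2})\subseteq\ovl{\sss_2}$ and, symmetrically, $\bar q_2(\ovl{\sss_1})\subseteq\ovl{\sss_1}$, where $\bar q_k=\bar i_k\bar a_k$ is built from the induced localization $\bar a_k\colon\ddd\to\ddd/\ovl{\sss_k}$ (it exists since the above bijection restricts to localizing subcategories) and its fully faithful right adjoint $\bar i_k$. The essential input — which is the content of \cite[\S7]{lowenvandenbergh1}, this proposition being exactly \cite[Proposition 3.8]{lowen4} — is that reduction intertwines the deformed localization datum with the undeformed one: as $\ddd/\ovl{\sss_k}$ is itself a flat $R$-deformation of $\ccc/\sss_k$, carrying its own reduction functor which we again denote $(-)_0$, there are natural isomorphisms $(\bar a_k D)_0\cong a_k(D_0)$ and $(\bar i_k E)_0\cong i_k(E_0)$, hence $(\bar q_k D)_0\cong q_k(D_0)$. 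Granting this, the verification is immediate: if $D\in\ovl{\sss_2}$ then $D_0\in\sss_2$, whence $q_1(D_0)\in\sss_2$ by compatibility of $\sss_1,\sss_2$ in $\ccc$ (Proposition \ref{propcomp0}(2)), and therefore $(\bar q_1 D)_0\cong q_1(D_0)\in\sss_2$, i.e. $\bar q_1 D\in\ovl{\sss_2}$.

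The crux — and the only genuinely non-formal point — is the intertwining $(\bar q_k D)_0\cong q_k(D_0)$, specifically its section-functor half $(\bar i_k E)_0\cong i_k(E_0)$: this is delicate because $(-)_0$ is merely right exact and $i_k$ is not assumed exact, so there is no naive base-change argument (the compatibility of $(-)_0$ with the \emph{quotient} $\bar a_k$ is, by contrast, essentially built into the construction of the induced deformation). The mechanism behind it is flatness of the deformation together with nilpotency of $m:=\ker(R\to k)$: for $D\in\ovl{\sss_k}$ the $m$-adic associated graded of $D$ is $\bigoplus_j(m^j/m^{j+1})\otimes_k D_0$, a coproduct of objects of $\sss_k$, so $\ovl{\sss_k}$ is generated as a Serre subcategory by $\iota(\sss_k)$ and the correction terms $\Tor^R_{>0}(k,-)$ of objects of $\ovl{\sss_k}$ stay inside $\sss_k$; fed through the unit $D\to\bar q_k D$ (whose kernel and cokernel lie in $\ovl{\sss_k}$) this shows that $D_0\to(\bar q_k D)_0$ is an $\sss_k$-isomorphism onto an object of $\sss_k^{\perp}$, which by uniqueness of the $\sss_k$-localization identifies $(\bar q_k D)_0$ with $q_k(D_0)$. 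These are exactly the computations performed in \cite[\S7]{lowenvandenbergh1}, so within the present paper the proposition may simply be quoted; the above is how one reconstructs the argument.
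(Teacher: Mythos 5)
The paper quotes this proposition directly from \cite[Proposition 3.8]{lowen4} and gives no proof of its own, so there is nothing internal to compare against. Your reconstruction captures the correct strategy: establish compatibility of $\ovl{\sss_1}$ and $\ovl{\sss_2}$ via criterion (2) of Proposition \ref{propcomp0}, pushing the verification back to $\ccc$ through the intertwining $(\bar q_k D)_0 \cong q_k(D_0)$; the product formula then follows because compatibility gives $\ovl{\sss_1}\ast\ovl{\sss_2}=\langle\ovl{\sss_1}\cup\ovl{\sss_2}\rangle$, and the order-isomorphism between the lattices of Serre subcategories preserves joins, so $\langle\ovl{\sss_1}\cup\ovl{\sss_2}\rangle=\ovl{\langle\sss_1\cup\sss_2\rangle}=\ovl{\sss_1\ast\sss_2}$.

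Two wrinkles worth flagging. First, you assert that $\sss'\ast\sss''=\langle\sss'\cup\sss''\rangle$ holds \emph{exactly when} $\sss',\sss''$ are compatible; only the implication from compatibility is stated in the paper, and only that direction is actually used in your argument --- the converse is not obvious and you should not lean on it. Second, the sketch of the intertwining is imprecise at two points: the associated-graded description $\bigoplus_j(m^j/m^{j+1})\otimes_k D_0$ is only valid for $R$-flat objects, not for arbitrary $D$; and the crucial fact that $(\bar q_k D)_0$ lands in $\sss_k^{\perp}$, which is what identifies it with $q_k(D_0)$, is asserted rather than argued. These details are exactly what \cite[\S 7]{lowenvandenbergh1} supplies, and you correctly identify that reference as the source; the mechanism you point to (flatness of the deformation plus nilpotency of $m$) is the right one, even if the execution is abbreviated. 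Since the paper itself treats the proposition as citable, this level of reconstruction is reasonable.
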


We will need one more lifting result.

\begin{lemma}\label{lemee}
Let $\sss_k \subseteq \ccc$ be compatible localizing subcategories for $k \in \{ 1,2\}$. The essential image $\eee$ of $$\sss_2 \lra \ccc \lra \ccc/\sss_1$$ is the kernel of 
$$a^1_2: \ccc/\sss_1 \lra \ccc/\sss_1 \ast \sss_2.$$
The lift $\ovl{\eee}$ of $\eee$ to $\ddd/\ovl{\sss_1}$ is the essential image of
$$\ovl{\sss_2} \lra \ddd \lra \ddd \lra \ddd/\ovl{\sss_2}.$$

\end{lemma}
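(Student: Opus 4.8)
The plan is to prove the two assertions in turn, the first at the level of the undeformed category $\ccc$ and the second by transporting it to $\ddd$ via the bijection between localizing subcategories. For the first claim, recall that compatibility of $\sss_1$ and $\sss_2$ gives, by the discussion in \S\ref{pardescent}, that $\sss_1 \ast \sss_2 = \langle \sss_1 \cup \sss_2 \rangle$ is a localizing subcategory and that there is a localization $a^1_2 : \ccc/\sss_1 \lra \ccc/(\sss_1 \ast \sss_2)$. First I would identify $\Kern(a^1_2)$ explicitly: an object $a_1(C) \in \ccc/\sss_1$ lies in the kernel iff $C \in \sss_1 \ast \sss_2$, i.e. iff there is a short exact sequence $0 \lra S_1 \lra C \lra S_2 \lra 0$ with $S_k \in \sss_k$; applying the exact functor $a_1$ and using $a_1(S_1) = 0$ shows $a_1(C) \cong a_1(S_2)$, so $\Kern(a^1_2)$ is contained in the essential image $\eee$ of $\sss_2 \lra \ccc \lra \ccc/\sss_1$. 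Conversely, any $a_1(S_2)$ with $S_2 \in \sss_2 \subseteq \sss_1 \ast \sss_2$ is killed by $a^1_2$, so $\eee \subseteq \Kern(a^1_2)$; hence $\eee = \Kern(a^1_2)$, which in particular shows $\eee$ is a localizing Serre subcategory of $\ccc/\sss_1$.

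For the second claim, I would apply the deformation/localization formalism from \cite[\S7]{lowenvandenbergh1} recalled above, together with Proposition \ref{liftcomp}. Under the deformation $\ccc/\sss_1 \lra \ddd/\ovl{\sss_1}$, the localizing subcategory $\eee \subseteq \ccc/\sss_1$ corresponds to its lift $\ovl{\eee} = \langle \eee \rangle_{\ddd/\ovl{\sss_1}}$. By Proposition \ref{liftcomp}, $\ovl{\sss_1}$ and $\ovl{\sss_2}$ are compatible in $\ddd$ with $\ovl{\sss_1} \ast \ovl{\sss_2} = \overline{\sss_1 \ast \sss_2}$, so the first part of the lemma applied in $\ddd$ identifies the essential image of $\ovl{\sss_2} \lra \ddd \lra \ddd/\ovl{\sss_1}$ with $\Kern(\ovl{a}^1_2 : \ddd/\ovl{\sss_1} \lra \ddd/(\ovl{\sss_1} \ast \ovl{\sss_2}))$. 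It then remains to check that this kernel is precisely $\ovl{\eee}$, i.e. that the formation of ``essential image of $\sss_2$ in the quotient by $\sss_1$'' commutes with deformation. This I would verify using the compatibility of the localization squares with deformation (the two commutative diagrams displayed before Proposition \ref{liftcover}): $\ovl{a}^1_2$ is the deformation of $a^1_2$, and an object of $\ddd/\ovl{\sss_1}$ is killed by $\ovl{a}^1_2$ iff its reduction mod the ideal is killed by $a^1_2$, which is exactly the defining condition $k \otimes_R - \in \eee$ for membership in $\ovl{\eee}$.

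The main obstacle I expect is the bookkeeping in the last step: one must be careful that the induced functor $\ovl{a}^1_2$ really is the localization $\ddd/\ovl{\sss_1} \lra \ddd/\overline{\sss_1 \ast \sss_2}$ associated to the lifted compatible pair (so that its kernel is $D_{\overline{\text{(something)}}}$-type data matching $\ovl{\eee}$), and that the bijection between localizing subcategories of $\ccc/\sss_1$ and of $\ddd/\ovl{\sss_1}$ is compatible with the bijection between those of $\ccc$ and $\ddd$ under the quotient functors. Both are essentially formal consequences of \cite{lowenvandenbergh1} and \cite{lowen4}, but making the identification $\Kern(\ovl{a}^1_2) = \ovl{\eee}$ airtight requires unwinding how $k \otimes_R -$ interacts with the Gabriel quotient $\ddd \lra \ddd/\ovl{\sss_1}$, which is where the one genuinely non-formal point lies.
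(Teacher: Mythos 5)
The paper gives no proof of Lemma~\ref{lemee} at all, so there is no ``paper's approach'' to compare against; I will assess your argument on its own terms. (Note you correctly read through the typo in the lemma statement, where the last display should be $\ovl{\sss_2} \lra \ddd \lra \ddd/\ovl{\sss_1}$.)

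Your proof of the first claim is correct and is the natural argument: $a_1(C)\in\Kern(a^1_2)$ iff $C\in\Kern(a^1_2 a_1)=\sss_1\ast\sss_2$, and the short exact sequence $0\to S_1\to C\to S_2\to 0$ shows $a_1(C)\cong a_1(S_2)\in\eee$; the reverse inclusion $\eee\subseteq\Kern(a^1_2)$ is immediate since $\sss_2\subseteq\sss_1\ast\sss_2$.

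Your plan for the second claim is also correct, but the last step can be made fully formal, avoiding the Nakayama-type implication ``$k\otimes_R D = 0\Rightarrow D=0$'' that your route through $k\otimes_R-$ implicitly invokes. Since $\ovl{\sss_1}$ and $\ovl{\sss_2}$ are compatible with $\ovl{\sss_1}\ast\ovl{\sss_2}=\overline{\sss_1\ast\sss_2}$ (Proposition~\ref{liftcomp}), the functor $\ovl{a}^1_2:\ddd/\ovl{\sss_1}\lra\ddd/\overline{\sss_1\ast\sss_2}$ is a localization and $\Kern(\ovl{a}^1_2)$ is a localizing Serre subcategory. By the inverse direction $\ttt\mapsto\ttt\cap(\ccc/\sss_1)$ of the bijection between localizing subcategories of $\ccc/\sss_1$ and of $\ddd/\ovl{\sss_1}$, it suffices to check $\Kern(\ovl{a}^1_2)\cap(\ccc/\sss_1)=\eee$; and this is immediate from the commutative square relating $a^1_2$ and $\ovl{a}^1_2$ together with faithfulness of the deformation inclusion $\ccc/(\sss_1\ast\sss_2)\lra\ddd/\overline{\sss_1\ast\sss_2}$: for $C\in\ccc/\sss_1$ one has $\ovl{a}^1_2(C)=0$ iff $a^1_2(C)=0$. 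This pins down $\Kern(\ovl{a}^1_2)=\ovl{\eee}$, and combined with the first claim applied inside $\ddd$ (using Proposition~\ref{liftcomp} again) yields the second assertion. Your route works too, but if you keep it you should state the Nakayama-type fact explicitly rather than folding it into ``essentially formal.''
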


\subsection{Lifts of compact generators}
Let $\iota: \ccc \lra \ddd$ be a deformation of Grothen\-dieck categories, let $\sss$ be a localizing Serre subcategory of $\ccc$ and let $\ovl{\sss}$ be the corresponding localizing subcategory of $\ddd$.

For an abelian category $\aaa$, let $\Ind(\aaa)$ be the ind-completion of $\aaa$, i.e. the closure of $\aaa$ inside $\Mod(\aaa)$ under filtered colimits, and let $\Pro(\aaa) = (\Ind(\aaa^{\op})^{\op}$ be the pro-completion of $\aaa$.

Consider the commutative diagram

$$\xymatrix{ {D_{\bar{\sss}}(\ddd)} \ar[r] & {D(\ddd)} \\ {D_{\sss}(\ccc)} \ar[u] \ar[r] & {D(\ccc)} \ar[u]_{R\iota} }$$
and the derived functor
$$k \otimes^L_R -: D(\mathsf{Pro}(\ddd)) \lra D(\mathsf{Pro}(\ccc)).$$

For a collection $\aaa$ of objects in a triangulated category $\ttt$, we denote by $\ovl{\langle \aaa \rangle }_\ttt$ the smallest localizing (i.e. triangulated and closed under direct sums) subcategory of $\ttt$  containing $\aaa$. 

Recall from \cite{dedekenlowen} that we have a balanced action
$$- \otimes^L_R - : D^{-}(\mmod(k)) \otimes D^{-}(\ddd) \lra D^{-}(\ddd).$$

The following is a refinement of \cite[Proposition 5.9]{dedekenlowen}.

\begin{proposition}\label{compgenlift}
Consider a collection $\GGGG$ of objects of $D^{-}(\ddd)$ such that the collection $k\otimes^L_R \GGGG = \{ k \otimes^L_R G \,\, |\,\, G \in \GGGG\}$ compactly generates $D_{{\sss}}(\ccc)$ inside $D(\ccc)$. Then $\GGGG$ compactly generates $D_{\ovl{\sss}}(\ddd)$ inside $D(\ddd)$.
\end{proposition}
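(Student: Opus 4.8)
The plan is to treat this as the $\sss$-relative version of \cite[Proposition 5.9]{dedekenlowen}, to which it specializes for $\sss = 0$, and to run the argument of loc.\ cit.\ while carrying the subcategory $\sss$ along. Write $m = \Kern(R \lra k)$, so that $m^N = 0$ for some $N$; since $R$ is coherent and $k$ is finitely presented over $R$, the ideal $m$ is finitely generated, each $m^i/m^{i+1}$ is a finitely presented $k$-module, and $k$ admits a resolution by finitely generated free $R$-modules. Two things must be shown: that every $G \in \GGGG$ is compact in $D(\ddd)$, and that $\ovl{\langle \GGGG \rangle}_{D(\ddd)} = D_{\ovl\sss}(\ddd)$. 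Compactness of the $G$'s does not see $\sss$ and is proved exactly as in loc.\ cit.: tensoring the filtration $R \supseteq m \supseteq \cdots \supseteq m^N = 0$ with $G$ over $R$ exhibits $G$ as a finite iterated extension in $D(\ddd)$ of the objects $(m^i/m^{i+1}) \otimes^L_R G \simeq (m^i/m^{i+1}) \otimes^L_k (k \otimes^L_R G)$, and since $k \otimes^L_R G$ is compact in $D(\ccc)$ and $m^i/m^{i+1}$ is finitely presented over $k$, one concludes using that $k \otimes^L_R -$ preserves coproducts together with the reduction adjunction relating $\RHom_{D(\ccc)}(k \otimes^L_R G, -)$ and $\RHom_{D(\ddd)}(G, R\iota(-))$.

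For the inclusion $\ovl{\langle \GGGG \rangle}_{D(\ddd)} \subseteq D_{\ovl\sss}(\ddd)$ it suffices, $D_{\ovl\sss}(\ddd)$ being localizing, to check $\GGGG \subseteq D_{\ovl\sss}(\ddd)$. Using the description $\ovl\sss = \{ D \in \ddd \mid k \otimes_R D \in \sss \}$ and the same filtration, this reduces to $(m^i/m^{i+1}) \otimes^L_k (k \otimes^L_R G) \in D_\sss(\ccc)$; as $k \otimes^L_R G \in D_\sss(\ccc)$, as $\sss$ is a localizing Serre subcategory, hence closed under subquotients, extensions and coproducts, and as $m^i/m^{i+1}$ is finitely generated over $k$, the hypercohomology spectral sequence computing $H^*((m^i/m^{i+1}) \otimes^L_k (k \otimes^L_R G))$ out of $\Tor^k_*(m^i/m^{i+1}, H^*(k \otimes^L_R G))$ gives this, and $D_\sss(\ccc) \subseteq D_{\ovl\sss}(\ddd)$ since $\sss = \ovl\sss \cap \ccc$.

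The reverse inclusion $D_{\ovl\sss}(\ddd) \subseteq \ovl{\langle \GGGG \rangle}_{D(\ddd)}$ is the heart of the matter. Since $\GGGG$ consists of objects compact in $D(\ddd)$, the subcategory $\ovl{\langle \GGGG \rangle}_{D(\ddd)}$ is Bousfield; together with $\GGGG \subseteq D_{\ovl\sss}(\ddd)$ and closure of $D_{\ovl\sss}(\ddd)$ under cones, the claim reduces to $D_{\ovl\sss}(\ddd) \cap \GGGG^{\perp} = 0$, where $\GGGG^{\perp} = \{ Y \mid \RHom_{D(\ddd)}(G, Y) = 0 \text{ for all } G \in \GGGG \}$. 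So I would take $Y \in D_{\ovl\sss}(\ddd)$ with $\RHom_{D(\ddd)}(G, Y) = 0$ for all $G$ and show $Y = 0$. First, $Y = 0$ if and only if $k \otimes^L_R Y = 0$: if $k \otimes^L_R Y = 0$ then the triangles $m^{i+1} \otimes^L_R Y \lra m^i \otimes^L_R Y \lra (m^i/m^{i+1}) \otimes^L_k (k \otimes^L_R Y)$ force $m^i \otimes^L_R Y \simeq Y$ for every $i$, whence $Y = 0$ from $m^N = 0$. Second, $k \otimes^L_R Y \in D_\sss(\ccc)$: for $H^q(Y) \in \ovl\sss$, the objects $\Tor^R_p(k, H^q(Y))$ are $m$-torsion, hence lie in $\ccc$, and are subquotients of finite coproducts of $H^q(Y)$, hence lie in $\ovl\sss$, so they lie in $\ovl\sss \cap \ccc = \sss$, and $H^n(k \otimes^L_R Y)$ is assembled from these by extensions. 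Since $k \otimes^L_R \GGGG$ generates $D_\sss(\ccc) = \ovl{\langle k \otimes^L_R \GGGG \rangle}_{D(\ccc)}$ by compact objects, it remains to prove $\RHom_{D(\ccc)}(k \otimes^L_R G, k \otimes^L_R Y) = 0$ for all $G$; by the reduction adjunction this is $\RHom_{D(\ddd)}(G, R\iota(k \otimes^L_R Y))$, and here the argument of \cite[Proposition 5.9]{dedekenlowen} applies: presenting $R\iota(k \otimes^L_R Y)$ through a free resolution of $k$ so that the compact object $G$ sees only finite extensions of copies of $Y$, one deduces the vanishing from $\RHom_{D(\ddd)}(G, Y) = 0$. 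I expect this last transfer of vanishing to be the main obstacle; it is the technical core already present in loc.\ cit., and the present refinement requires, beyond it, only the verifications that $k \otimes^L_R -$ carries $D_{\ovl\sss}(\ddd)$ into $D_\sss(\ccc)$ and that $\ovl{\langle k \otimes^L_R \GGGG \rangle}_{D(\ccc)} = D_\sss(\ccc)$, which come down to $\ovl\sss \cap \ccc = \sss$ together with the finiteness and Serre/coproduct properties recalled above.
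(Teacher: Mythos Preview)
Your overall mechanism is the same as the paper's: both rest on (i) $k\otimes^L_R G\in\ovl{\langle G\rangle}_{D(\ddd)}$, whence $D_\sss(\ccc)=\ovl{\langle k\otimes^L_R\GGGG\rangle}_{D(\ccc)}\subseteq\ovl{\langle\GGGG\rangle}_{D(\ddd)}$, and (ii) for $D\in D_{\ovl\sss}(\ddd)$, the triangle built from $R\supseteq m$ (the paper uses $I\otimes^L_R D\to D\to k\otimes^L_R D$) has outer terms in $D_\sss(\ccc)$. You package this dually via right orthogonals; the paper proves the inclusion $D_{\ovl\sss}(\ddd)\subseteq\ovl{\langle\GGGG\rangle}_{D(\ddd)}$ directly.

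There is, however, a genuine gap in your organization. The balanced action $-\otimes^L_R-$ is set up in \cite{dedekenlowen} only as a functor $D^-(\mmod(R))\times D^-(\ddd)\to D^-(\ddd)$, and you apply it to an \emph{unbounded} $Y\in D_{\ovl\sss}(\ddd)$: the objects $k\otimes^L_R Y$, $m^i\otimes^L_R Y$, the $\Tor$ spectral sequence computing $H^*(k\otimes^L_R Y)$, and the identification of $k\otimes^L_R Y$ as an object of $D(\ccc)$ all require either boundedness or a separate extension argument (recall $k$ has infinite projective dimension over $R$ already for $R=k[\epsilon]$). The paper handles this by first writing $D=\mathrm{hocolim}_n\,\tau^{\leq n}D$ to reduce to $D^-_{\ovl\sss}(\ddd)$, which works cleanly for the direct-inclusion approach since $\ovl{\langle\GGGG\rangle}_{D(\ddd)}$ is closed under homotopy colimits. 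This reduction does \emph{not} transplant to your orthogonal approach, because $\RHom(G,\tau^{\leq n}Y)$ need not vanish when $\RHom(G,Y)$ does. One smaller point in the same vein: in your last step, the free resolution of $k$ over $R$ is infinite, so $R\iota(k\otimes^L_R Y)$ is not a \emph{finite} extension of copies of $Y$; the correct reason for the vanishing is that $k\otimes^L_R Y\in\ovl{\langle Y\rangle}_{D(\ddd)}$ and $G^\perp$ is localizing because $G$ is compact.
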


\begin{proof}
The objects of $\GGGG$ are compact by \cite[Proposition 5.8]{dedekenlowen}. 
Consider $\ovl{ \langle \GGGG \rangle}_{D(\ddd)}$, i.e the closure of $\GGGG$ in $D(\ddd)$ under cones, shifts and direct dums.
We are to show that $\ovl{ \langle \GGGG \rangle}_{D(\ddd)} = D_{\ovl{\sss}}(\ddd)$.
First we have to make sure that every $G \in \GGGG$ is contained in $D_{\ovl{\sss}}(\ddd)$. Writing $I$ as a homotopy colimit of cones of finite free $k$-modules, we obtain that both $k \otimes^L_R G$ and $I \otimes^L_R G \cong I \otimes^L_k (k \otimes^L_R D)$ belong to $D_{\sss}(\ccc)$ and hence to $D_{\ovl{\sss}}(\ddd)$. From the triangle 
$$I \otimes^L_R G \lra G \lra k \otimes^L_R G \lra$$
we deduce that $G$ also belongs to $D_{\ovl{\sss}}(\ddd)$. Consequently $\ovl{ \langle \GGGG \rangle}_{D(\ddd)} \subseteq D_{\ovl{\sss}}(\ddd)$.

Next we look at the other inclusion $D_{\ovl{\sss}}(\ddd) \subseteq \ovl{ \langle \GGGG \rangle}_{D(\ddd)}$. 
For an arbitrary complex $D \in D_{\ovl{\sss}}(\ddd)$, we can write $D = \mathrm{hocolim}_{n = 0}^{\infty} \tau^{\leq n}D$ with $\tau^{\leq n}D \in D^{-}_{\ovl{\sss}}(\ddd)$. Consequently, it suffices to show that $D^{-}_{\ovl{\sss}}(\ddd) \subseteq \ovl{ \langle \GGGG \rangle}_{D(\ddd)}$. For $D \in D^-_{\ovl{\sss}}(\ddd)$, consider the triangle
$$I \otimes^L_R D \lra D \lra k \otimes^L_R D \lra.$$
First note that writing $k$ as a homotopy colimit of cones of finite free $R$-modules, we see that 
\begin{equation}\label{hocok}
k \otimes^L_R D \in \ovl{ \langle D \rangle}_{D(\ddd)}. 
\end{equation}
Using $I \otimes^L_R D \cong I \otimes^L_k (k \otimes^L_R D)$, we deduce from balancedness of the derived tensor product that $I \otimes_R^L D$ and $k \otimes_R^L D$ belong to both $D(\ccc)$ and $D_{\ovl{\sss}}(\ddd)$, whence to $D_{\sss}(\ccc)$. Consequently, it suffices to show that  $\ovl{ \langle k \otimes^L_R\GGGG \rangle}_{D(\ccc)} = D_{\sss}(\ccc) \subseteq \ovl{ \langle \GGGG \rangle}_{D(\ddd)}$. To see this, it suffices to consider \eqref{hocok} for all $D \in \GGGG$.
\end{proof}

\subsection{Compact generation of deformations}
Putting together all our results so far, we now describe a situation in which one obtains compact generation of the derived category $D(\ddd)$ of a deformation $\ddd$. 
Let $\ccc$ be a Grothendieck abelian category with a deformation $\iota: \ccc \lra \ddd$. Let $\sss_i \subseteq \ccc$ for $i \in I = \{1, \dots, n\}$ be a covering collection of compatible localizing subcategories of $\ccc$ and let $\ovl{\sss_i} \subseteq \ddd$ be the corresponding covering collection of compatible localizing subcategories of $\ddd$.

\begin{theorem}\label{thmdefcomp}
Suppose:
\begin{enumerate}
\item For every $i \in I$, $\ovl{\sss_i} \subseteq \ddd$ is affine.
\item For every $i \in I$, there is a collection $\GGGG_i$ of objects of $D^-(\ddd/\ovl{\sss_i})$ such that the collection $k \otimes_R^L \GGGG_i$ compactly generates $D(\ccc/\sss_i)$.
\item For every $i \in I$ and $J \subseteq I \setminus \{i\}$, the essential image $\eee$ of
$$\cap_{j \in J}\sss_j \lra \ccc \lra \ccc/\sss_i$$
is such that there is a collection $\GGGG$ of objects of $D^-(\ddd/\ovl{\sss_i})$ for which the collection $k \otimes_R^L \GGGG$ compactly generates $D_{{\eee}}(\ccc/{\sss_i})$ inside $D(\ccc/\sss_i)$.
\end{enumerate}
Then $D(\ddd)$ is compactly generated and an object in $D(\ddd)$ is compact if and only if its image in each $D(\ddd/ \ovl{\sss_i})$ is compact.
\end{theorem}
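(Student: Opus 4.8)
The plan is to reduce Theorem~\ref{thmdefcomp} to the triangulated cocovering theorem (Theorem~\ref{cocovertria}) applied to $\ttt = D(\ddd)$ with the Bousfield subcategories $\iii_i = D_{\ovl{\sss_i}}(\ddd)$, exactly as Theorem~\ref{cocovergroth} was deduced in the undeformed case, but now using Proposition~\ref{compgenlift} at each stage to produce the required compact generators from the given collections $\GGGG_i$ and $\GGGG$ over the deformed category. First I would record that $\{\ovl{\sss_i}\}_{i\in I}$ is a compatible covering of $\ddd$: compatibility is Proposition~\ref{liftcomp} and the covering property is Proposition~\ref{liftcover}. By hypothesis~(1) each $\ovl{\sss_i}$ is affine, so Proposition~\ref{compint} gives that the collection $\{D_{\ovl{\sss_i}}(\ddd)\}_{i}$ is a compatible (properly intersecting) collection of Bousfield subcategories of $D(\ddd)$, and Proposition~\ref{propindcover} gives that it is a covering of $D(\ddd)$. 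Thus the structural hypotheses of Theorem~\ref{cocovertria} (with $\alpha = \aleph_0$) are in place; what remains is to verify its two numbered conditions.

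For condition~(1) of Theorem~\ref{cocovertria}, I must show $D(\ddd)/D_{\ovl{\sss_i}}(\ddd) \simeq D(\ddd/\ovl{\sss_i})$ is compactly generated. Here the deformation $\ccc/\sss_i \lra \ddd/\ovl{\sss_i}$ from \cite[\S7]{lowenvandenbergh1} is the relevant one; applying Proposition~\ref{compgenlift} with ``$\ddd$'' there taken to be $\ddd/\ovl{\sss_i}$, ``$\ccc$'' taken to be $\ccc/\sss_i$, and the localizing subcategory taken to be $0$ (so that $D_{\sss}(\ccc) = D(\ccc/\sss_i)$), hypothesis~(2) of the present theorem --- that $k\otimes^L_R \GGGG_i$ compactly generates $D(\ccc/\sss_i)$ --- yields that $\GGGG_i$ compactly generates $D(\ddd/\ovl{\sss_i})$. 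For condition~(2) of Theorem~\ref{cocovertria}, fix $i$ and $\varnothing \neq J \subseteq I\setminus\{i\}$. By Lemma~\ref{lemess} applied in $\ddd$, the essential image of $\cap_{j\in J} D_{\ovl{\sss_j}}(\ddd) \lra D(\ddd) \lra D(\ddd/\ovl{\sss_i})$ is $D_{\ovl{\eee}}(\ddd/\ovl{\sss_i})$ where $\ovl{\eee}$ is the lift (Lemma~\ref{lemee}) of the essential image $\eee$ of $\cap_{j\in J}\sss_j \lra \ccc \lra \ccc/\sss_i$. Now apply Proposition~\ref{compgenlift} with the deformation $\ccc/\sss_i \lra \ddd/\ovl{\sss_i}$ and the localizing subcategory $\eee \subseteq \ccc/\sss_i$: hypothesis~(3) says $k\otimes^L_R \GGGG$ compactly generates $D_{\eee}(\ccc/\sss_i)$ inside $D(\ccc/\sss_i)$, so the proposition gives that $\GGGG$ compactly generates $D_{\ovl{\eee}}(\ddd/\ovl{\sss_i})$ inside $D(\ddd/\ovl{\sss_i})$, which is precisely the essential image above. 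Hence both conditions of Theorem~\ref{cocovertria} hold, and the theorem delivers that $D(\ddd)$ is ($\aleph_0$-)compactly generated with the stated detection of compactness through the quotients $D(\ddd/\ovl{\sss_i})$.

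The main obstacle I anticipate is the bookkeeping needed to justify that Proposition~\ref{compgenlift} really applies to the \emph{induced} deformations $\ccc/\sss_i \lra \ddd/\ovl{\sss_i}$ and to $\eee \subseteq \ccc/\sss_i$ with its lift $\ovl{\eee}$: one needs that the balanced action $-\otimes^L_R-$ and the functor $k\otimes^L_R-$ used in that proposition are compatible with passage to localizations, and that $\ovl{\eee}$ as produced by Lemma~\ref{lemee} is genuinely the Serre subcategory of $\ddd/\ovl{\sss_i}$ corresponding to $\eee$ under the deformation bijection (so that $D_{\ovl{\eee}}(\ddd/\ovl{\sss_i})$ is the object Proposition~\ref{compgenlift} outputs). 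Both points follow from the functoriality of deformation-and-localization in \cite[\S7]{lowenvandenbergh1} and from Lemma~\ref{lemee}, but they are exactly the places where one must be careful that ``the'' deformation structure is the expected one. Everything else is a direct assembly of the cited results.
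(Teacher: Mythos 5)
Your proof is correct and follows essentially the same route as the paper: lift the compatible covering via Propositions~\ref{liftcover} and~\ref{liftcomp}, use affineness and Propositions~\ref{compint} and~\ref{propindcover} to pass to $D(\ddd)$, and then feed the hypotheses into the cocovering theorem by invoking Proposition~\ref{compgenlift} (for each quotient and for the pieces supplied by Lemma~\ref{lemee}). The only difference is cosmetic: the paper routes the argument through Theorem~\ref{cocovergroth}, whereas you unfold that one level and apply Theorem~\ref{cocovertria} directly, which is the same content.

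One small slip worth fixing: in verifying condition~(1) you say you apply Proposition~\ref{compgenlift} with ``the localizing subcategory taken to be $0$ (so that $D_{\sss}(\ccc)=D(\ccc/\sss_i)$)''. Taking $\sss=0$ gives $D_0(\ccc/\sss_i)=0$, not the whole derived category. You want the other extreme, $\sss=\ccc/\sss_i$ itself, so that $D_{\sss}(\ccc/\sss_i)=D(\ccc/\sss_i)$ and the corresponding lift is $\ovl{\sss}=\ddd/\ovl{\sss_i}$; equivalently, this is exactly the non-relative special case recorded as \cite[Proposition 5.9]{dedekenlowen}, of which Proposition~\ref{compgenlift} is stated to be a refinement.
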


\begin{proof}
By Propositions \ref{liftcover} and \ref{liftcomp} and assumption (1), we are in the basic setup of Theorem \ref{cocovergroth}. By Proposition \ref{compgenlift}, the collections $\GGGG_i$ and $\GGGG$ in assumptions (2) and (3) consitute collections of compact generators of $D(\ddd/\sss_i)$ and of $D_{\ovl{\eee}}(\ddd/ \ovl{\sss}_i)$ inside $D(\ddd/ \ovl{\sss}_i)$ respectively. Finally, using Lemma \ref{lemee}, assumptions (1) and (2) in Theorem \ref{cocovergroth} are fulfilled and the theorem applies to the deformed situation.
\end{proof}

\section{Lifting Koszul complexes}

For an object $M$ in $D(A)$ for a $k$-algebra $A$ , we denote by
$\langle M \rangle_A$ the smallest thick subcategory of $D(A)$
containing $M$ and by $\ovl{\langle M \rangle}_A$ the smallest
localizing subcategory of $D(A)$ containing $M$. 

For a $k$-algebra map $A\lra B$ we have a restriction functor
\[
{}_A(-):D(B)\lra D(A)
\]
with left adjoint given by the derived tensor product
\[
B\otimes^L_A-:D(A)\lra D(B)
\]
(the actual map $A\lra B$ will never be in doubt).
\subsection{An auxiliary result}\label{paraux}

Let $k$ be a field and let $\NNN$ be the Lie algebra freely generated
by $x_1, \dots, x_n$ subject to the relations that all expressions
involving $\geq d$ brackets vanish. Since there are only a finite
number of expressions in $(x_i)_i$ involving $< d$ brackets, $\NNN$ is
finite dimensional over $k$. Let $U$ be the universal enveloping
algebra of $\NNN$. Let $I \subseteq U$ be the twosided ideal generated
by $([x_i, x_j])_{ij}$. Then $U/I = k[x_1, \dots, x_n]$. Our arguments
below will be mostly based on the $k$-algebra maps
\[
\xymatrix{%
U\ar[r] & U/I \ar[r]^-{x_i\mapsto 0}& k
}
\]

The left $k$-module $k$ gives rise to a left $U$-module ${_U}k = U/(x_1, \dots, x_n)$ and a left
$U/I$-module ${_{U/I}}k = (U/I)/(x_1, \dots, x_n)$. Since
$U$ (as well as $U/I$) is noetherian of finite global dimension,
${_U}k$ is a perfect left $U$-module and ${_{U/I}}k$ is a perfect
$U/I$-module. By tensoring we obtain
another perfect $U/I$-module: $U/I \otimes^L_U {_U}k$.

\begin{proposition}\label{propaux}
We have the following equalities: $\langle {_{U/I}}k \rangle_{U/I} = \langle U/I \otimes^L_U {_U}k \rangle_{U/I}$ and $\ovl{\langle {_{U/I}}k \rangle}_{U/I} = \ovl{\langle U/I \otimes^L_U {_U}k \rangle}_{U/I}$.
\end{proposition}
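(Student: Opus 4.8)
\emph{Setup and strategy.} Write $A=U/I=k[x_1,\dots,x_n]$ and $\mathfrak m=(x_1,\dots,x_n)\subseteq A$, so that ${}_{U/I}k=A/\mathfrak m$, and set $M:=U/I\otimes^L_U{}_Uk\in D(A)$ — the perfect complex occurring in the statement. The plan is to prove the two mutual inclusions of \emph{thick} subcategories
\[
M\in\langle A/\mathfrak m\rangle_A\qquad\text{and}\qquad A/\mathfrak m\in\langle M\rangle_A .
\]
Since a thick subcategory is contained in the localizing subcategory it generates, these two inclusions give at once $\langle{}_{U/I}k\rangle_{U/I}=\langle M\rangle_{U/I}$ and $\ovl{\langle{}_{U/I}k\rangle}_{U/I}=\ovl{\langle M\rangle}_{U/I}$, which is the assertion.

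\emph{Step 1: the support of $M$.} The key point is that ${}_Uk$ is the restriction along $U\lra A$ of the trivial $A$-module $A/\mathfrak m$ (both are $k$ with trivial action, because $U_+$ maps onto $\mathfrak m$). Hence $A\otimes^L_A(A/\mathfrak m)={}_{U/I}k$ gives, by associativity of the derived tensor product along $U\lra A$,
\[
M\;=\;A\otimes^L_U{}_Uk\;\cong\;(A\otimes^L_U A)\otimes^L_A(A/\mathfrak m).
\]
Because $A/\mathfrak m$ is a perfect complex over the noetherian ring $A$ with $\mathrm{Supp}_A(A/\mathfrak m)=\{\mathfrak m\}$, and the support of a derived tensor product is contained in the intersection of the supports (here one factor is perfect), we get $\mathrm{Supp}_A(M)\subseteq\{\mathfrak m\}$; and $H^0(M)=A\otimes_U k=A/\mathfrak m\neq0$ forces equality. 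Given this, $M\in\langle A/\mathfrak m\rangle_A$ is a routine dévissage: $M$ is a bounded complex with finitely generated cohomology, each $H^i(M)$ is then a finite length $A$-module supported at $\mathfrak m$, hence an iterated extension of copies of $A/\mathfrak m$, so $H^i(M)\in\langle A/\mathfrak m\rangle_A$; the truncation triangles $\tau^{\le i-1}M\lra\tau^{\le i}M\lra H^i(M)[-i]\lra$ then build $M$ out of its finitely many cohomology objects.

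\emph{Step 2: $A/\mathfrak m\in\langle M\rangle_A$.} Let $K=K(x_1,\dots,x_n)$ be the Koszul complex. Since $x_1,\dots,x_n$ is a regular sequence in $A$, $K\simeq A/\mathfrak m$ in $D(A)$, and $K$ is an iterated mapping cone (of the multiplications $x_l$) built from $A$; consequently $M\otimes^L_A K$ is obtained from $M$ by the \emph{same} iterated cone construction, so $M\otimes^L_A K\in\langle M\rangle_A$. On the other hand, associativity of the derived tensor product identifies
\[
M\otimes^L_A K\;\cong\;M\otimes^L_A(A/\mathfrak m)\;\cong\;(A/\mathfrak m)\otimes^L_A\bigl(A\otimes^L_U{}_Uk\bigr)\;\cong\;k\otimes^L_U{}_Uk\;=\;\mathrm{Tor}^U_\bullet(k,k),
\]
a complex of $k$-vector spaces, hence formal; writing it as $\bigoplus_i(A/\mathfrak m)^{\,\dim_k\mathrm{Tor}^U_i(k,k)}[i]$ in $D(A)$ and noting $\mathrm{Tor}^U_0(k,k)=k\otimes_U k=k$, we see $A/\mathfrak m$ is a direct summand of $M\otimes^L_A K$. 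As thick subcategories are closed under direct summands, $A/\mathfrak m\in\langle M\rangle_A$, completing the proof.

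\emph{Main obstacle.} Everything except Step 1 is standard triangulated/commutative-algebra bookkeeping. The real content is the support computation: a direct attack only shows $\mathrm{Tor}^U_\bullet(U/I,k)$ to be finitely generated over $A$, and it is not obvious a priori that it is supported at the origin alone. Recognizing $M$ as ``the skyscraper $A/\mathfrak m$ tensored with a fixed complex $A\otimes^L_U A$'' — via the identity ${}_Uk=\mathrm{res}_U(A/\mathfrak m)$ and associativity — is what cleanly pins the support down without ever computing these Tor-modules.
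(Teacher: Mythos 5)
Your proposal is correct and takes a genuinely different route from the paper, most notably in the direction ``$A/\mathfrak m\in\langle M\rangle_A$''. The paper does not prove this inclusion directly at the thick level: it first shows $M\in\langle{}_{U/I}k\rangle_{U/I}$ (as you do), notes $M$ is perfect hence compact, and then proves that $M$ is a \emph{compact generator} of $\ovl{\langle{}_{U/I}k\rangle}_{U/I}$ by killing the right orthogonal, using the extension-of-scalars/restriction adjunction $\RHom_{U/I}(U/I\otimes^L_U{}_Uk,X)\cong\RHom_U({}_Uk,{}_UX)$ together with the facts that restriction carries $\ovl{\langle{}_{U/I}k\rangle}_{U/I}$ into $\ovl{\langle{}_Uk\rangle}_U$ and that ${}_Uk$ compactly generates the latter. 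Both equalities of the proposition then follow simultaneously from Neeman's theorem identifying the compacts of a compactly generated localizing subcategory with the thick subcategory spanned by a set of compact generators. Your argument replaces this by an entirely ``non-derived-Morita'' computation: tensoring $M$ with the Koszul complex $K(x_1,\dots,x_n)$ keeps you inside $\langle M\rangle_A$ because $K$ is an iterated cone on $A$, and the base change $M\otimes^L_AK\cong k\otimes^L_U k$ lands in complexes of $k$-vector spaces, hence is formal, with $\Tor^U_0(k,k)=k$ exhibiting $A/\mathfrak m$ as a direct summand. This is more elementary (no appeal to compact generation or Brown representability) and gives the thick statement first, with the localizing statement as an immediate formal corollary; what it loses is the observation, used elsewhere in the paper, that $M$ is a compact generator of $\ovl{\langle k\rangle}_{U/I}$.

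Two small remarks on Step~1. The paper's own phrasing (``$\langle{}_{U/I}k\rangle_{U/I}$ consists of all modules with finite dimensional total cohomology'') silently presupposes a support constraint; your reformulation via $M\cong(A\otimes^L_UA)\otimes^L_A(A/\mathfrak m)$ and $\operatorname{Supp}_A(M)\subseteq\{\mathfrak m\}$ is arguably cleaner on this point. However, when you invoke ``$\operatorname{Supp}$ of a derived tensor product is contained in the intersection of the supports'' you are implicitly treating $A\otimes^L_UA$ as an object of $D(A)$ rather than of $D(A\otimes_kA)$: the localization at $\mathfrak p$ in the support computation hits the \emph{left} $A$-action of $A\otimes^L_UA$, while the $\otimes^L_A(A/\mathfrak m)$ uses the \emph{right} one. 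The two actions do agree up to homotopy here — because $U\twoheadrightarrow A$ is surjective and $A$ is commutative, so $A$ is a symmetric $U$-bimodule and left and right multiplication by any $a'\in A$ on $A\otimes^L_UA$ are both induced by the same $U$-bimodule endomorphism of either $A$-factor — but this deserves a sentence, since it is exactly what makes the support formula applicable.
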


\begin{proof}
  We start by proving $ U/I \otimes^L_U {_U}k
  \in \langle {_{U/I}}k \rangle_{U/I}$. Note that
  $\langle {_{U/I}}k \rangle_{U/I}$ consists of all $U/I$-modules with
  finite dimensional total cohomology. Thus, it suffices to look at
  the cohomology of $U/I \otimes^L_U {_U}k$. Since we are only
  interested in the underlying $k$-module, it suffices to compute ${_k}(U/I
  \otimes^L_U {_U}k)$ (restriction for $k\hookrightarrow U/I$) 
which we can do using a finite free resolution of $(U/I)_U$ as
  a right $U$-module. Things then reduce to the trivial fact ${}_k(U\otimes^L_U {}_Uk)
\cong k$.

Thus, we have proven  $U/I \otimes^L_U {_U}k \in
  \langle {_{U/I}}k \rangle_{U/I} \subset \ovl{\langle {_{U/I}}k \rangle}_{U/I}$.
Now $U/I \otimes^L_U {_U}k$ is a perfect left $U/I$-module and
hence it is compact in $D(U/I)$. From this it follows immediately that
it is also a compact object in $\ovl{\langle {}_{U/I}k\rangle}_{U/I}$.
To prove the claims of the proposition it is sufficient to prove that
 $U/I \otimes^L_U {_U}k$
is a compact generator of $\ovl{\langle {}_{U/I}k\rangle}_{U/I}$. In other
words we have to prove that its right orthogonal is zero:
\[
\bigl( U/I
    \otimes^L_U {_U}k\bigr)^\perp\cap  \ovl{\langle {_{U/I}}k \rangle}_{U/I}  = 0.
\]

Now suppose we have $X \in
  \ovl{\langle {_{U/I}}k \rangle}_{U/I}$ with $\RHom_{U/I}(U/I
  \otimes^L_U {_U}k, X) = 0$. Then we have ${}_UX \in \ovl{\langle
    {_U}k \rangle}_U$ and also by adjunction $\RHom_{U}({_U}k, {}_UX) =
  0$. Since the perfect complex ${}_Uk$ is a compact generator of $\ovl{\langle
    {_U}k \rangle}_U$ we obtain ${}_UX=0$ which implies $X=0$.
\end{proof}

\subsection{Koszul precomplexes}\label{koszul}

Let $A$ be a possibly non-commutative $k$-algebra and consider a finite sequence of elements $x = (x_1, \dots, x_n)$ in $A$. 
We will work in the category $\Mod(A)$ of left $A$-modules. 
We define a precomplex $K(x)$ of $A$-modules with 
$K(x)_p = A\otimes_k \Lambda^p k^n$ the free $A$-module of rank $\begin{pmatrix} n \\ p \end{pmatrix}$ with basis $e_{i_1} \wedge \dots \wedge e_{i_p}$ with $i_1 < \dots < i_p$.
We define the $A$-linear morphism $$d_p: K(x)_p \lra K(x)_{p-1}$$
by $$d_p(e_{i_1} \wedge \dots \wedge e_{i_p}) = \sum_{k = 1}^p (-1)^{k+1} x_{i_k} e_{i_1} \wedge \dots \wedge \hat{e}_{i_k} \wedge \dots \wedge e_{i_p}.$$

The differential may be compactly written as $d=\sum_iR_{x_i}\partial/\partial e_i$
where we consider the $e_i$ as odd and $R_{x_i}(a)=ax_i$ which yields:
\[
d^2=\sum_{1 \leq i < j \leq p} R_{[x_j,x_i]}\,\partial^2/\partial e_i\partial e_j
\]
Thus $K(x)$ is a complex if and only if the $(x_{i})_i$  commute.

\subsection{Lifting Koszul complexes}\label{parliftkoszul}
Let $(R, m)$ be a finite dimensional $k$-algebra with $m^d = 0$ and
$R/m = k$ and let $A'$ be an $R$-algebra with $A'/mA' = A$. Consider a
sequence $f = (f_1, \dots, f_n)$ of element in $A$ and a sequence $f'
= (f'_1, \dots, f'_n)$ of elements in $A'$ such that the reduction of
$f'_i$ to $A$ equals $f_i$.  Let $K(f)$ be the Koszul complex
associated to $f$. As soon as some of the $f'_i$ do not commute, the
Koszul precomplex $K(f')$ fails to be a complex according to \S
\ref{koszul}. For this reason, we will now use the result of \S
\ref{paraux} to lift a perfect complex generating the same localizing
subcategory as $K(f)$. In fact, this ``liftable complex'' happens to
be independent of $A'$ or $R$! Its size depends however in a major way on
$d$.

\begin{theorem}\label{thmlift}
Let $(R,m)$ be a finite dimensional algebra with $m^d = 0$ and $R/m = k$, and let $A$ be a commutative $k$-algebra and $f = (f_1, \dots, f_n)$ a sequence of elements in $A$.
There exists a perfect complex $X \in D(A)$ with $\langle K(f) \rangle_A = \langle X \rangle_A$ and
$$\ovl{\langle K(f) \rangle}_A = \ovl{\langle X \rangle}_A$$
which is such that for every $R$-algebra $A'$ with $A'/m = A$ there exists a perfect complex $X' \in D(A')$ with $A \otimes^L_{A'} X' = X$.
We can take $X = A \otimes^L_U {_U}k$.
\end{theorem}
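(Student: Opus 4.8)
The plan is to show that the explicit complex $X=A\otimes^L_U{}_Uk$ works, where the $k$-algebra map $U\to A$ is the composite $U\to U/I=k[x_1,\dots,x_n]\xrightarrow{\,x_i\mapsto f_i\,}A$ (this composite exists precisely because $A$ is commutative, so the commutator relations defining $U/I$ are respected). First I would dispose of perfectness: $U$ is noetherian of finite global dimension, so ${}_Uk$ is a perfect $U$-module, and the derived base-change functor $A\otimes^L_U(-):D(U)\to D(A)$ carries perfect complexes to perfect complexes; hence $X$ is perfect. Then I would compare with $K(f)$ by routing everything through $U/I=k[x_1,\dots,x_n]$. The Koszul complex $K(x)$ on $(x_1,\dots,x_n)$ is a free resolution of ${}_{U/I}k$ over $U/I$, and applying $A\otimes_{U/I}(-)$ to it produces exactly $K(f)$; thus $K(f)\cong A\otimes^L_{U/I}{}_{U/I}k$ in $D(A)$. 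By transitivity of derived tensor products, $X\cong A\otimes^L_{U/I}\bigl(U/I\otimes^L_U{}_Uk\bigr)$. Now Proposition~\ref{propaux} gives $\langle{}_{U/I}k\rangle_{U/I}=\langle U/I\otimes^L_U{}_Uk\rangle_{U/I}$ and the corresponding equality of localizing subcategories; since an exact functor $F$ sends $\langle M\rangle$ into $\langle FM\rangle$ (the preimage of $\langle FM\rangle$ is thick and contains $M$) and $A\otimes^L_{U/I}(-)$ also preserves coproducts, applying it transports both equalities to $\langle K(f)\rangle_A=\langle X\rangle_A$ and $\ovl{\langle K(f)\rangle}_A=\ovl{\langle X\rangle}_A$.

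The substantive part is the lifting assertion, and this is where the Lie algebra $\NNN$ is matched to the hypothesis $m^d=0$. Let $A'$ be an $R$-algebra with $A'/mA'=A$ and pick lifts $f'_i\in A'$ of the $f_i$. I would prove by induction on $\ell$ that any element of $A'$ formed from $f'_1,\dots,f'_n$ by $\ell$ nested commutators lies in $m^\ell A'$. For $\ell=0$ this is trivial; for $\ell\ge1$ such an element has the shape $[w_1,w_2]$ with $w_1,w_2$ built from $\ell_1$ resp.\ $\ell_2$ commutators, $\ell_1+\ell_2+1=\ell$, so by induction $w_1\in m^{\ell_1}A'$, $w_2\in m^{\ell_2}A'$; writing $w_1=\sum p_\alpha a_\alpha$, $w_2=\sum q_\beta b_\beta$ with $p_\alpha\in m^{\ell_1}$, $q_\beta\in m^{\ell_2}$ and using that the image of $R$ is central in $A'$, one finds $[w_1,w_2]=\sum p_\alpha q_\beta[a_\alpha,b_\beta]$, and since each $[a_\alpha,b_\beta]$ maps to a commutator in the commutative ring $A$ it lies in $mA'$; hence $[w_1,w_2]\in m^{\ell_1+\ell_2+1}A'=m^\ell A'$. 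In particular every $\ell$-fold nested commutator of the $f'_i$ with $\ell\ge d$ vanishes because $m^dA'=0$. Therefore $x_i\mapsto f'_i$ defines a Lie algebra homomorphism from $\NNN$ to $A'$ (with commutator bracket), which by the universal property of the enveloping algebra extends to a $k$-algebra homomorphism $\phi':U\to A'$, and $\phi'$ reduces modulo $m$ to the map $U\to A$, $x_i\mapsto f_i$.

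Finally I would set $X'=A'\otimes^L_U{}_Uk\in D(A')$ (via $\phi'$); it is perfect for the same reason as $X$, and transitivity of derived tensor products together with $\phi'$ lifting $\phi$ gives $A\otimes^L_{A'}X'\cong A\otimes^L_{A'}\bigl(A'\otimes^L_U{}_Uk\bigr)\cong A\otimes^L_U{}_Uk=X$, which completes the proof. The one genuinely nonformal step is the inductive bracket-vanishing claim above: it requires using commutativity of $A$ to extract one factor of $m$ from each commutator together with centrality of $R$ and $m^d=0$ to absorb the accumulated powers of $m$, and this is exactly the point where the free nilpotent Lie algebra $\NNN$ of ``degree $d$'' is the correct universal gadget. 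Everything else reduces to Proposition~\ref{propaux} and standard manipulations of derived base change.
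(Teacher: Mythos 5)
Your proof is correct and takes essentially the same route as the paper: set $X = A\otimes^L_U {}_Uk$, route the comparison with $K(f)$ through $U/I = k[x_1,\dots,x_n]$ and Proposition~\ref{propaux}, and take $X' = A'\otimes^L_U{}_Uk$. The one point where you add genuine content is the inductive commutator bound showing that $\ell$-fold bracket expressions in the $f'_i$ lie in $m^\ell A'$ (hence vanish for $\ell\ge d$), which is what justifies the existence of the $k$-algebra map $U\to A'$ sending $x_i\mapsto f'_i$ — the paper simply asserts this map exists ``from the definition of the algebra $U$'' and leaves that verification implicit.
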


\begin{proof}
Let $f' = (f'_1, \dots, f'_n)$ be an arbitrary sequence of elements in $A'$ such that the reduction of $f'_i$ to $A$ equals $f_i$. 
From the definition of the algebra $U$ in \S \ref{paraux}, we obtain a commutative diagram
$$\xymatrix{{U} \ar[r] \ar[d] & {A'} \ar[d] \\ {U/I} \ar[r] & A}$$
with the horizontal maps determined by $x_i \longmapsto f'_i$ and $x_i \longmapsto f_i$ respectively.
We thus have
\begin{equation}\label{eqdiag}
A \otimes^L_{A'} (A' \otimes^L_U {{_U} k}) = A \otimes^L_{U/I} (U/I \otimes^L_U {_U}k).
\end{equation}
By Proposition \ref{propaux} we have 
$\langle {_{U/I}}k \rangle_{U/I} = \langle U/I \otimes^L_U {_U}k \rangle_{U/I}$
and hence
\begin{equation}\label{eqsubcats}
\langle A \otimes^L_{U/I} {_{U/I}} k \rangle_{A} = \langle A \otimes^L_{U/I} (U/I \otimes^L_U {_U}k)\rangle_A.
\end{equation}
Over $U/I = k[x_1, \dots, x_n]$, the Koszul complex $K(x_1, \dots, x_n)$ constitutes a projective resolution of ${_{U/I}}k$. Hence, on the left hand side of \eqref{eqsubcats} we have $A \otimes^L_{U/I} {_{U/I}} k = A \otimes_{U/I} K(x_1, \dots, x_n) = K(f)$. Hence, by \eqref{eqdiag} it suffices to take $X = A \otimes^L_U {_U}k$ and $X' = A' \otimes^L_U {_U}k$.
\end{proof}

Since over $U$, the Chevalley-Eilenberg complex $V(\NNN)$ of $\NNN$ constitutes a projective resolution of ${_U}k$, in Theorem \ref{thmlift} we concretely obtain $X = A \otimes_U V(\NNN) = A \otimes_k \Lambda^{\ast}\NNN$ and $X' = A' \otimes_U V(\NNN) = A' \otimes_k \Lambda^{\ast}\NNN$, both equipped with the Chevalley-Eilenberg differential
$$\begin{aligned}
d(a \otimes y_1 \wedge \dots \wedge y_p) & = \sum_{i = 1}^p (-1)^{i +1} ay_i \otimes y_1 \wedge \dots \wedge \hat{y_i} \wedge \dots \wedge y_p \\
& + \sum_{i < j} (-1)^{i + j} a \otimes [y_i, y_j] \wedge \dots \wedge \hat{y_i} \dots \wedge \hat{y_j} \dots \wedge y_p
\end{aligned}$$
for a basis $(y_i)_i$ for $\frak{n}$.
Let us look at some examples. 

If $d = 1$ or $n = 1$, we have $\NNN = kx_1 \oplus \dots \oplus kx_n$, $U = k[x_1, \dots, x_n]$, $V(\NNN) = K(x_1, \dots, x_n)$ and $X = K(f_1, \dots, f_n)$. For $n = 1$ we have $X' = K(f'_1)$. 

Thus, the first non-trivial case to consider is $d = 2$ and $n = 2$. We have $\NNN = kx_1 \oplus kx_2 \oplus k[x_1, x_2]$ and  consequently $X$ is given by the complex
$$\xymatrix{0 \ar[r] & A \ar[r]_{d_3} & {A^3} \ar[r]_{d_2} & {A^3} \ar[r]_{d_1} & A \ar[r] & 0}$$
with basis elements over $A$ given by $x_1 \wedge x_2 \wedge [x_1, x_2]$ in degree 3, $x_1\wedge x_2$, $x_2 \wedge [x_1, x_2]$, $[x_1, x_2] \wedge x_1$ in degree 2, $x_1$, $x_2$, $[x_1, x_2]$ in degree 1 and $1$ in degree 0 and differentials given by
$$d_3 = \begin{pmatrix} [f_1, f_2] \\ f_1 \\ f_2 \end{pmatrix}, \hspace{0,5cm} d_2 = \begin{pmatrix} -f_2 & 0 & [f_1, f_2] \\
f_1 & -[f_1, f_2] & 0 \\ -1 & f_2 & - f_1 \end{pmatrix}, \hspace{0,5cm} d_1 = \begin{pmatrix} f_1 & f_2 & [f_1, f_2] \end{pmatrix}.$$
Similarly $X'$ is given by the same expressions with $A$ replaced by $A'$ and $f_i$ replaced by the chosen lift $f'_i$. Note that $[f_1, f_2] = 0$ but we possibly have $[f'_1, f'_2] \neq 0$.

\section{Deformations of schemes} \label{pardefschemes}

In this section we specialize Theorem \ref{thmdefcomp} to the scheme case. In Theorem \ref{thmscheme}, we give a general formulation in the the setup of a Grothendieck deformation of the category $\Qch(X)$ over a quasi-compact, separated scheme. After discussing some special cases in which direct lifting of Koszul complexes already leads to compact generation of the deformed category (like the case in which all deformed rings on an affine cover are commutative), in \S \ref{parmaintheorem} we prove our main Theorem \ref{maintheorem} which states that all non-commutative deformations are in fact compactly generated. The proof is based upon the change from Koszul complexes to liftable generators from Theorem \ref{thmlift}.

\subsection{Deformed schemes using ample line bundles}
Let $X$ be a quasi-compact separated scheme over a field $k$. If we want to investigate compact generation of $D(\ddd)$ for an abelian deformation $\ddd$ of $\ccc = \Qch(X)$, by Proposition \ref{compgenlift} (and in fact, its special case \cite[Proposition 5.9]{dedekenlowen}) a global approach is to look for compact generators of $D(\ccc)$ that lift to $D(\ddd)$ under $k \otimes^L_R -$. We easily obtain the following result:

\begin{proposition}\label{propamp}
Suppose $X$ has an ample line bundle $\LLL$. If $H^2(X, \ooo_X) = 0$, all infinitesimal deformations of $\Qch(X)$ have compactly generated derived categories.
\end{proposition}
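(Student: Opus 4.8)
The plan is to produce an explicit set of compact generators of $D(\ddd)$, for $\ddd$ an infinitesimal deformation of $\ccc = \Qch(X)$, by lifting the standard generators of $D(\ccc)$ attached to the ample line bundle, and then to invoke Proposition \ref{compgenlift}.

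First I would recall Neeman's theorem \cite{neeman2}: since $X$ is quasi-compact separated and carries an ample line bundle $\LLL$, the negative twists $\{\LLL^{\otimes -m} \mid m \geq 0\}$ form a set of compact generators of $D(\Qch(X))$ (each $\LLL^{\otimes -m}$ is a perfect complex, hence compact). I would then apply Proposition \ref{compgenlift} with $\ccc = \Qch(X)$, with $\ddd$ the given deformation, and with $\sss = \ccc$, so that $D_{\sss}(\ccc) = D(\ccc)$ and $D_{\ovl{\sss}}(\ddd) = D(\ddd)$: it then suffices to exhibit a collection $\GGGG \subseteq D^{-}(\ddd)$ with $k \otimes^L_R \GGGG = \{\LLL^{\otimes -m} \mid m \geq 0\}$, since the proposition immediately gives that such a $\GGGG$ compactly generates $D(\ddd)$.

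The key step is therefore the lifting. I would write the structural surjection $R \lra k$, whose kernel $\MMM$ is nilpotent, as a finite tower of square-zero extensions $R \lra R/\MMM^{N-1} \lra \cdots \lra R/\MMM = k$, and use obstruction theory for lifting objects along abelian deformations (in the spirit of \cite{lowenvandenbergh1}; see also \cite{dedekenlowen}): the obstruction to extending a given flat lift of an object $M \in \ccc$ across a square-zero step with kernel $J$ lies in $\Ext^2_{\ccc}(M,M) \otimes_k J$, so $\Ext^2_{\ccc}(M,M) = 0$ forces $M$ to lift, one step at a time, to a flat object $\widetilde{M}$ of $\ddd$ with $k \otimes_R \widetilde{M} \cong M$; by flatness, $k \otimes^L_R \widetilde{M} \cong M$ as well. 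For $M = \LLL^{\otimes -m}$ the relevant group vanishes: since $\LLL^{\otimes -m}$ is invertible, $\Ext^i_{\Qch(X)}(\LLL^{\otimes -m}, \LLL^{\otimes -m}) \cong \Ext^i_{\Qch(X)}(\ooo_X, \ooo_X) \cong H^i(X, \ooo_X)$, which is zero for $i = 2$ by hypothesis; hence each $\LLL^{\otimes -m}$ admits a flat lift $\widetilde{\LLL}_m \in \ddd$.

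Assembling these, the collection $\GGGG = \{\widetilde{\LLL}_m \mid m \geq 0\} \subseteq D(\ddd) \subseteq D^{-}(\ddd)$ reduces under $k \otimes^L_R -$ to a set of compact generators of $D(\ccc)$, so Proposition \ref{compgenlift} yields compact generation of $D(\ddd)$. The step I expect to be the genuine obstacle is the object-lifting argument: one needs a clean statement that $\Ext^2_{\ccc}(M,M) = 0$ guarantees a flat lift of $M$ along an arbitrary abelian deformation (together with the d\'evissage to square-zero steps) --- an instance of obstruction theory which, though standard in spirit, must be applied in the possibly non-algebraic abelian-category setting. Granting it, the remainder is formal.
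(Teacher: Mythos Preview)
Your proposal is correct and follows essentially the same route as the paper: lift the ample-line-bundle generators using obstruction theory and conclude via Proposition \ref{compgenlift}. The only substantive point to add is that the clean obstruction statement you flag as the genuine obstacle is exactly the content of \cite{lowen2}, which places the obstruction to lifting an object $M$ along a square-zero abelian deformation with kernel $I$ in $\Ext^2_{\ccc}(M, I \otimes_k M)$; the paper invokes this reference directly rather than \cite{lowenvandenbergh1} or \cite{dedekenlowen}, and uses all powers $\LLL^n$ with $n \in \Z$ rather than just the negative ones, but these are cosmetic differences.
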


\begin{proof}
According to \cite{neeman}, $D(\Qch(X))$ is compactly generated by the tensor powers $\LLL^n$ for $n \in \Z$. By \cite{lowen2}, the obstructions to lifting $\LLL^n$ along an infinitesimal deformation lie in $\Ext^2_{X}(\LLL^n, I \otimes_k \LLL^n)$ for $I \cong k^m$ for some $m$. But we have
$$\Ext^2_{X}(\LLL, I \otimes_k \LLL) \cong [\Ext^2(\LLL, \LLL)]^m = [\Ext^2_X(\ooo_X, \ooo_X)]^m = [H^2(X, \ooo_X)]^m = 0$$
as desired.
\end{proof}

\subsection{Deformed schemes using coverings}\label{pardefschcover}

Let $(X, \ooo)$ be a quasi-compact, separated scheme and put $\ccc = \Qch(X)$. Since the homological condition in Proposition \ref{propamp} excludes interesting schemes, we now investigate a different approach based upon affine covers.

Let $U_i$ for $i \in I = \{1, \dots, n\}$ be an affine cover of $X$, with $U_i \cong \mathrm{Spec}(\ooo(U_i))$. Put $Z_i = X \setminus U_i$. With $\ccc_i = \Qch(U_i)$ and $\sss_i = \Qch_{Z_i}(X)$, the category of quasi-coherent sheaves on $X$ supported on $Z_i$, we are in the situation of a covering collection of compatible localizations of $\ccc$. 

For $J \subseteq I$, put $U_J = \cap_{j \in J}U_j$ and $\ccc_J = \Qch(U_J)$. For $i \in I$ and $J \subseteq I \setminus \{i\}$, put $Z^i_J = U_i \setminus \cup_{j \in J} U_j = U_i \cap \cap_{j \in J} Z_j$. The essential image $\eee$ of $\cap_{j \in J}\sss_j \lra \ccc \lra \ccc/\sss_i$ is given by $\eee = \Qch_{Z^i_J}(U_i)$. 

Let $\Delta$ and $\Delta_{\varnothing}$ be as in \S \ref{pardescent}. For $K \subseteq J$, we have $U_J \subseteq U_K$ and the corresponding localization is given by restriction of sheaves $a^K_J: \Qch(U_K) \lra \Qch(U_J)$ with right adjoint direct image functor $i^K_J$. Moreover, the localization can be entirely described in terms of module categories. If $\ooo(U_K) \lra \ooo(U_J)$ is the canonical restriction, then we have 
$$a^K_J \cong \ooo(U_J) \otimes_{\ooo(U_K)} -: \Mod(\ooo(U_K)) \lra \Mod(\ooo(U_J))$$
and the right adjoint $i^K_J$ is simply the restriction of scalars functor, which is obviously exact.
For the resulting pseudofunctor
$$\Mod(\ooo(U_{\bullet})): \Delta \lra \Cat: J \lra \Mod(\ooo(U_{J})),$$
we have $\Qch(X) \cong \Des(\Mod(\ooo(U_{\bullet})))$.
According to \cite{lowen4}, this situation is preserved under deformation. More precisely, up to equivalence an arbitrary abelian deformation $\iota: \ccc \lra \ddd$ is obtained as
$\ddd \cong \Des(\Mod(\ovl{\ooo}_{\bullet}))$ where 
$$\ovl{\ooo}_{\bullet}: \Delta \lra \Rng: J \lra \ovl{\ooo}_J$$
is a pseudofunctor (a ``twisted presheaf'') deforming $\ooo(U_{\bullet})$, and $\ddd_J = \Mod(\ovl{\ooo}_J)$ is the deformation of $\ccc_J$ corresponding to $\ovl{\sss_J}$. By Proposition \ref{desaffine}, the functors
$\ovl{i}_K: \ddd_K \lra \ddd$ are exact, and the $\ovl{a}_k: \ddd \lra \ddd_k$ constitute a covering collection of compatible localizations of $\ddd$. We note that by taking $\GGGG_i = \{\ovl{\ooo}_i\}$, condition 2 in Theorem \ref{thmdefcomp} is automatically fulfilled. We conclude:

\begin{theorem}\label{thmscheme}
Let $X$ be a quasi-compact, separated scheme with an affine cover $U_i$ for $i \in I = \{1, \dots, n\}$. Let $\iota: \Qch(X) \lra \ddd$ be an abelian deformation with induced deformations $\ddd_i$ of $\Qch(U_i)$. For every $i \in I$ and $J \subseteq I \setminus \{i\}$, consider $Z^i_J = U_i \cap \cap_{j \in J} Z_j$. Suppose there is a collection $\GGGG^i_J$ of objects in $D^-(\ddd_i)$ such that $k \otimes^L_R \GGGG^i_J$ compactly generates $D_{Z^i_J}(U_i)$ inside $D(U_i)$.
Then $D(\ddd)$ is compactly generated and an object in $D(\ddd)$ is compact if and only if its image in each $D(\ddd_i)$ is compact.
\end{theorem}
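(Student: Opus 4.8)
The plan is to deduce Theorem~\ref{thmscheme} as a direct instance of the general deformation result Theorem~\ref{thmdefcomp}, applied to the Grothendieck category $\ccc = \Qch(X)$ equipped with the covering collection of compatible localizing subcategories $\sss_i = \Qch_{Z_i}(X)$ for $i \in I$, and its abelian deformation $\iota: \ccc \lra \ddd$. First I would recall, from the discussion in \S\ref{pardefschcover}, the identifications $\ccc/\sss_i \cong \Qch(U_i) \cong \Mod(\ooo(U_i))$ (using that $U_i$ is affine) and, on the deformed side, $\ddd/\ovl{\sss_i} \cong \ddd_i = \Mod(\ovl{\ooo}_i)$, together with the fact (Propositions~\ref{liftcover} and \ref{liftcomp}) that the $\ovl{\sss_i}$ form a compatible covering of $\ddd$ lifting the covering $\{\sss_i\}$.

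Then I would verify the three hypotheses of Theorem~\ref{thmdefcomp} one by one. Hypothesis~(1), affineness of each $\ovl{\sss_i}$, is exactly Proposition~\ref{desaffine}(4) applied to the twisted-presheaf description $\ddd \cong \Des(\Mod(\ovl{\ooo}_{\bullet}))$, since the relevant right adjoints are restriction-of-scalars functors, hence exact. Hypothesis~(2) holds with $\GGGG_i = \{\ovl{\ooo}_i\}$: because $\ovl{\ooo}_i$ deforms $\ooo(U_i)$ we have $k \otimes^L_R \ovl{\ooo}_i \cong \ooo(U_i)$, which is a compact generator of $D(\Mod(\ooo(U_i))) = D(\ccc/\sss_i)$, as already noted before the statement. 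Hypothesis~(3) is, after these identifications, literally the hypothesis of Theorem~\ref{thmscheme}: the essential image $\eee$ of $\cap_{j\in J}\sss_j \lra \ccc \lra \ccc/\sss_i$ equals $\Qch_{Z^i_J}(U_i)$ (as computed in \S\ref{pardefschcover}, using Lemmas~\ref{lemess} and \ref{lemee}), so $D_{\eee}(\ccc/\sss_i) = D_{Z^i_J}(U_i)$, and the given collection $\GGGG^i_J \subseteq D^-(\ddd_i) = D^-(\ddd/\ovl{\sss_i})$ with $k \otimes^L_R \GGGG^i_J$ compactly generating $D_{Z^i_J}(U_i)$ inside $D(U_i)$ supplies exactly the $\GGGG$ required in condition~(3).

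With all three hypotheses in place, Theorem~\ref{thmdefcomp} yields that $D(\ddd)$ is compactly generated and that an object of $D(\ddd)$ is compact if and only if its image in each $D(\ddd/\ovl{\sss_i})$ is compact; rewriting $D(\ddd/\ovl{\sss_i}) = D(\ddd_i)$ gives the statement. There is no genuine obstacle here beyond careful bookkeeping: the only points needing attention are the chain of identifications linking the abstract data of Theorem~\ref{thmdefcomp} to the geometric data at hand — in particular that the bounded-above quotient categories $D^-(\ddd/\ovl{\sss_i})$ coincide with $D^-(\ddd_i)$, and that the essential image $\eee$ really is the support category $\Qch_{Z^i_J}(U_i)$ — but these are precisely the facts established in \S\ref{pardefschcover} and in Lemmas~\ref{lemess} and \ref{lemee}, so the argument is essentially a matter of assembling existing pieces.
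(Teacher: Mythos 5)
Your proof is correct and takes exactly the paper's route: the paper establishes Theorem~\ref{thmscheme} precisely by the discussion immediately preceding it in \S\ref{pardefschcover}, namely by verifying conditions (1)--(3) of Theorem~\ref{thmdefcomp} for $\ccc = \Qch(X)$ with $\sss_i = \Qch_{Z_i}(X)$, using the twisted-presheaf description of $\ddd$, Proposition~\ref{desaffine}(4) for affineness, $\GGGG_i = \{\ovl{\ooo}_i\}$ for condition (2), and the identification $\eee = \Qch_{Z^i_J}(U_i)$ for condition (3). You have faithfully reconstructed the implicit argument the paper compresses into ``We conclude.''
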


\begin{remark}
Before it makes sense to investigate the more general situation of deformations of quasi-compact, semi-separated schemes $X$, for which $D_{\Qch(X)}(\Mod(X))$ is known to be compactly generated by \cite{bondalvandenbergh}, a better understanding of the direct relation between deformations of $\Qch(X)$ and $\Mod(X)$ should be obtained. It follows from \cite{lowen4} that these two Grothendieck categories have equivalent deformation theories, the deformation equivalence passing through twisted non-commutative deformations of the structure sheaf. An interesting question in its own right is to understand whether corresponding deformations of $\Qch(X)$ and of $\Mod(X)$ are related by an inclusion functor and a quasi-coherator like in the undeformed setup. \end{remark}

\subsection{Twisted deformed schemes}\label{partwisted}

In this section we collect some observations which follow immediately from Theorem \ref{thmscheme}, based upon direct lifting of Koszul complexes. In the slightly more restrictive deformation setup of \S \ref{parmaintheorem}, all compact generation results we state here also follow from the more general Theorem \ref{maintheorem}, but there the involved generators are more complicated.

Let $A$ be a commutative $k$-algebra and $f = (f_1, \dots, f_n)$ a finite sequence of elements in $A$. Put $X = \mathrm{Spec}(A)$.
Consider the closed subset
$$Z = V(f) = V(f_1, \dots, f_n) = \{ p \in \mathrm{Spec}(A) \,\, |\,\, f_1, \dots, f_n \in p\} \subseteq X.$$
Let $\Qch_Z(X)$ be the localizing subcategory of quasi-coherent sheaves on $X$ supported on $Z$ and put $D_Z(X) = D_{\Qch_Z(X)}(\Qch(X))$.
We recall the following:

\begin{proposition}\cite{bokstedtneeman}\label{propbok}
The category $D_Z(X)$ is compactly generated by $K(f)$ inside $D(X)$.
\end{proposition}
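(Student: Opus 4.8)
The plan is to prove the two assertions of Proposition~\ref{propbok} — that $K(f)$ is a compact object of $D(X)$, and that it generates $D_Z(X)$ — by reducing everything to a statement about $\Spec(A)$ that can be checked directly. Since $X = \Spec(A)$ is affine, we have an equivalence $D(X) \cong D(A)$ under which $K(f)$, being a bounded complex of finite free $A$-modules, is a perfect complex and hence compact in $D(A)$. This disposes of the compactness claim. The remaining content is the statement $\ovl{\langle K(f)\rangle}_A = D_Z(X)$, where under the equivalence $D_Z(X)$ corresponds to the full subcategory of $D(A)$ of complexes with cohomology supported (as $A$-modules) on $Z = V(f)$; call this subcategory $D_Z(A)$.

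First I would record that $\langle K(f)\rangle_A \subseteq D_Z(A)$: the cohomology modules of $K(f)$ are finitely generated $A$-modules annihilated by a power of the ideal $(f_1,\dots,f_n)$ (indeed, each $H^p(K(f))$ is killed by each $f_i$ up to the standard annihilator estimates), so their support lies in $Z$; since $D_Z(A)$ is a localizing (in fact thick) subcategory, it contains $\ovl{\langle K(f)\rangle}_A$. For the reverse inclusion, the key step is to show that $K(f)$ is a \emph{generator} of $D_Z(A)$, i.e. that its right orthogonal inside $D_Z(A)$ vanishes: if $X \in D_Z(A)$ satisfies $\RHom_A(K(f), X) = 0$, then $X = 0$. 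Because $K(f)$ is compact, this orthogonality-vanishing statement is exactly what upgrades to the localizing-subcategory equality $\ovl{\langle K(f)\rangle}_A = D_Z(A)$ (a compact object whose right orthogonal in a localizing subcategory $\ttt_0$ vanishes generates $\ttt_0$).

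To prove the vanishing, I would argue as follows. For a single element $f_i$, the complex $K(f_i) = (A \xrightarrow{f_i} A)$ is the Koszul complex, and $\RHom_A(K(f_i), X)$ computes, up to shift, the cone of $X \xrightarrow{f_i} X$; iterating, $\RHom_A(K(f), X)$ is built from $X$ by successively taking cones of multiplication by $f_1,\dots,f_n$, so its vanishing says that $X$ is ``derived $f$-complete'' in a sense dual to support. Equivalently: $\RHom_A(K(f), X) = 0$ forces, via the long exact sequences, that multiplication by each $f_i$ is an isomorphism on the relevant iterated cones, and one deduces inductively that $X[f_i^{-1}]$-type localizations kill $X$. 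Combining this with $X \in D_Z(A)$ — whose cohomology is supported on $V(f)$, hence is $f_i$-power-torsion for each $i$ — yields $X = 0$, since a module that is both $f_i$-divisible (all $f_i$ act invertibly on the relevant cohomology) and $f_i$-power-torsion must vanish. A cleaner packaging: $D_Z(A)$ is generated by the Koszul complexes $K(f^m) = K(f_1^{m_1},\dots,f_n^{m_n})$ for all multi-exponents $m$, and each such $K(f^m)$ lies in $\langle K(f)\rangle_A$ by an elementary ``telescoping'' argument on each coordinate (the map $K(f_i^{m_i+1}) \to K(f_i^{m_i})$ induced by multiplication, whose cone is built from $K(f_i)$), so already the finitely many — in fact the single — complex $K(f)$ generates the whole subcategory $D_Z(A)$ as a localizing subcategory.

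The main obstacle is the last paragraph: making precise why $D_Z(A)$ is generated by the Koszul complexes $K(f^m)$ and why each of these is already in the localizing (indeed thick) subcategory generated by $K(f)$ alone. The first half is a standard but non-trivial fact about unbounded derived categories of commutative rings (objects supported on a closed set are built from Koszul complexes, via e.g. local cohomology / the stable Koszul complex $\mathrm{Cech}^\bullet(f)$); the second half requires the observation that, coordinatewise, $K(f_i^{m})$ and $K(f_i)$ generate the same thick subcategory of $D(A)$ — this is the affine, commutative shadow of exactly the kind of ``same thick subcategory'' statement proved in \S\ref{paraux} and \S\ref{parliftkoszul}. I expect the author's proof to simply cite \cite{bokstedtneeman} for this, since it is precisely the B\"okstedt--Neeman result, so the "proof" in the paper is really a pointer; my proposal above is the argument that lies behind that citation.
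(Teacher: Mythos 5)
You correctly anticipated that the paper gives no proof here: the proposition is stated with only the citation to B\"okstedt--Neeman, so there is no in-paper argument to compare against, and your reconstruction is indeed the argument behind that citation. Your outline is right: compactness because $K(f)$ is perfect; the inclusion $\ovl{\langle K(f)\rangle}_A\subseteq D_Z(A)$ because the cohomology of $K(f)$ is $(f)$-torsion; then the generation claim reduces, via the standard ``compact object with vanishing right orthogonal inside a localizing subcategory'' principle, to showing that $X\in D_Z(A)$ with $\RHom_A(K(f),X)=0$ must vanish. The one place where your writeup is too loose is precisely this last step (``derived $f$-complete in a sense dual to support'', ``isomorphism on the relevant iterated cones''): the clean way to nail it down is to use self-duality of the Koszul complex, $\RHom_A(K(f),X)\simeq K(f)\otimes^L_A X[-n]$, and then induct on $n$. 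Writing $Y=K(f_2,\dots,f_n)\otimes^L_A X$, the hypothesis gives $K(f_1)\otimes^L_A Y=0$, i.e.\ $f_1$ acts invertibly on $Y$; but $Y$ is supported on $V(f)\subseteq V(f_1)$, so its cohomology is $f_1$-power-torsion, forcing $Y=0$, and one peels off the remaining $f_i$ the same way until $X=0$. With that substitution your proof is complete and is the standard one; the alternative ``telescope from $K(f^m)$'' route you sketch in the last paragraph is also valid but requires the extra input (\v{C}ech/local cohomology generation of $D_Z(A)$) that you flagged, so the orthogonality argument is the more economical of the two.
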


Let $\ovl{A}$ be an $R$-deformation of $A$ and let $\ddd = \Mod(\ovl{A})$ be the corresponding abelian deformation of $\ccc = \Qch(X) \cong \Mod(A)$. Let $\ovl{\Qch_Z(X)} \subseteq \ddd$ be the localizing subcategory corresponding to $\Qch_Z(X) \subseteq \ccc$.
Let $\ovl{f} = (\ovl{f_1}, \dots, \ovl{f_n})$ be a sequence of lifts of the elements $f_i \in A$ to $\ovl{f_i}$ in $\ovl{A}$ under the canonical map $\ovl{A} \lra A$.
Clearly the precomplex $K(\ovl{f})$ of finite free $\ovl{A}$-modules
satisfies $k \otimes_R K(\ovl{f}) = K(f)$. 
If $K(\ovl{f})$ is a complex, we thus have $$k \otimes^L_R K(\ovl{f}) = K(f).$$
From Proposition \ref{compgenlift} we deduce:

\begin{proposition}
If every two distinct elements $\ovl{f_l}$ and $\ovl{f_k}$ commute, then $K(\ovl{f})$ compactly generates $D_{\ovl{\Qch_Z(X)}}(\ddd)$ inside $D(\ddd)$.
\end{proposition}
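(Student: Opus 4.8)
The plan is to deduce the statement directly from Proposition \ref{compgenlift}, applied to the single-object collection $\GGGG = \{K(\ovl{f})\}$ of objects of $D^-(\ddd)$.

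First I would record that the hypothesis makes $K(\ovl{f})$ an honest complex. By the formula for the square of the differential obtained in \S\ref{koszul}, $d^2 = \sum_{i<j} R_{[\ovl{f}_j, \ovl{f}_i]}\,\partial^2/\partial e_i\partial e_j$, so $d^2 = 0$ exactly when all the brackets $[\ovl{f}_j, \ovl{f}_i]$ vanish, which is the assumed pairwise commutativity. Hence $K(\ovl{f})$ is a bounded complex of finite free $\ovl{A}$-modules, in particular an object of $D^-(\ddd)$. Since $\ovl{A}$ is a flat $R$-deformation of $A$, each term $K(\ovl{f})_p = \ovl{A}\otimes_k\Lambda^p k^n$ is $R$-flat, so $k\otimes^L_R K(\ovl{f}) \cong k\otimes_R K(\ovl{f})$; reducing the differential identifies this with $A\otimes_k\Lambda^\bullet k^n$ carrying the Koszul differential of $f$, i.e. $k\otimes^L_R K(\ovl{f}) = K(f)$. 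This is precisely the computation already indicated in the paragraph preceding the statement.

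Next I would invoke Proposition \ref{propbok} (Bökstedt--Neeman): $K(f)$ compactly generates $D_Z(X) = D_{\Qch_Z(X)}(\Qch(X))$ inside $D(X)$. Thus, writing $\ccc = \Qch(X) \cong \Mod(A)$ and $\sss = \Qch_Z(X)$, the collection $k\otimes^L_R\GGGG = \{K(f)\}$ compactly generates $D_{\sss}(\ccc)$ inside $D(\ccc)$. Now Proposition \ref{compgenlift}, applied to the deformation $\iota\colon\ccc\lra\ddd = \Mod(\ovl{A})$, the localizing subcategory $\sss$ and its lift $\ovl{\sss} = \ovl{\Qch_Z(X)}$, yields that $\GGGG = \{K(\ovl{f})\}$ compactly generates $D_{\ovl{\sss}}(\ddd) = D_{\ovl{\Qch_Z(X)}}(\ddd)$ inside $D(\ddd)$, which is the assertion.

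There is no real obstacle here: the only points requiring a word of care are the verification that $K(\ovl{f})$ is genuinely a complex (immediate from the $d^2$-formula of \S\ref{koszul} together with commutativity) and the identification $k\otimes^L_R K(\ovl{f}) = K(f)$ (immediate from $R$-flatness of the terms, so that the derived tensor product is underived). Once these are in place the statement is a formal consequence of Propositions \ref{propbok} and \ref{compgenlift}.
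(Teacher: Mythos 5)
Your proof is correct and matches the paper's intended argument exactly: the paper states the key facts (that $K(\ovl{f})$ is a complex under the commutativity hypothesis and that $k\otimes^L_R K(\ovl{f})=K(f)$) in the preceding paragraph and then deduces the proposition directly from Proposition \ref{compgenlift}, together with Proposition \ref{propbok}, precisely as you do. The only thing you add is a careful explanation of why the derived tensor is underived, which is a harmless elaboration of the paper's terse remark.
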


\begin{corollary}
If $\ovl{A}$ is a commutative deformation of $A$, then $D_{\ovl{\Qch_Z(X)}}(\ddd)$ is compactly generated inside $D(\ddd)$.
\end{corollary}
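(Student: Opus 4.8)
The plan is to deduce this immediately from the preceding Proposition. Since $\ovl{A}$ is commutative, any two elements of $\ovl{A}$ commute; in particular the chosen lifts $\ovl{f_1},\dots,\ovl{f_n}$ of $f_1,\dots,f_n$ commute pairwise, so by \S\ref{koszul} the Koszul precomplex $K(\ovl{f})$ of finite free $\ovl{A}$-modules is an honest complex. The Proposition then applies verbatim and shows that $K(\ovl{f})$ compactly generates $D_{\ovl{\Qch_Z(X)}}(\ddd)$ inside $D(\ddd)$; in particular this subcategory is compactly generated.

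If one wants an argument not quoting the preceding Proposition, one simply re-runs its short proof. The map $\ovl{A}\lra A$ carries $\ovl{f_i}$ to $f_i$ and commutes with the formation of free modules and with the Koszul differential, so $k\otimes_R K(\ovl{f}) = K(f)$; and since $K(\ovl{f})$ is a bounded complex of free $\ovl{A}$-modules, hence of flat $R$-modules, we get $k\otimes^L_R K(\ovl{f}) = k\otimes_R K(\ovl{f}) = K(f)$. By Proposition \ref{propbok} the object $K(f)$ compactly generates $D_Z(X) = D_{\Qch_Z(X)}(\Qch(X))$, so Proposition \ref{compgenlift}, applied with $\ccc = \Qch(X)$, $\sss = \Qch_Z(X)$ and $\GGGG = \{K(\ovl{f})\}$, yields that $K(\ovl{f})$ compactly generates $D_{\ovl{\Qch_Z(X)}}(\ddd)$.

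I do not expect any real obstacle: the whole content sits in the preceding Proposition (and ultimately in the liftability result Proposition \ref{compgenlift} together with the B\"okstedt--Neeman identification of $K(f)$ as a compact generator). The only point to verify is the trivial commutativity remark that makes $K(\ovl{f})$ an actual complex. One should note that this Corollary is also subsumed by the paper's main theorem (Theorem \ref{maintheorem}), which treats arbitrary, possibly non-commutative, deformations at the price of replacing $K(f)$ by the higher Koszul complex $X_d$ of Theorem \ref{thmlift}; in the commutative case, however, the plain Koszul complex already works and provides a more transparent generator.
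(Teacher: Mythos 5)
Your proposal is correct and takes the same route as the paper: the corollary is an immediate consequence of the preceding proposition, since commutativity of $\ovl{A}$ forces the lifts $\ovl{f_i}$ to commute pairwise, making $K(\ovl{f})$ an honest complex. Your expanded version simply unwinds that proposition's (short) argument via Propositions \ref{propbok} and \ref{compgenlift}, which is exactly what is going on under the hood.
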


\begin{corollary}\label{corsing}
If $f = (f_1)$ consists of a single element, then $D_{\ovl{\Qch_Z(X)}}(\ddd)$ is compactly generated inside $D(\ddd)$.
\end{corollary}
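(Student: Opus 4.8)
The plan is to treat this as the degenerate $n=1$ instance of the Proposition immediately preceding it. When the sequence $f$ consists of the single element $f_1$, there are no two \emph{distinct} indices $l \neq k$, so the hypothesis ``every two distinct elements $\ovl{f_l}$ and $\ovl{f_k}$ commute'' is satisfied vacuously. The conclusion of that Proposition then reads precisely that $K(\ovl{f_1})$ compactly generates $D_{\ovl{\Qch_Z(X)}}(\ddd)$ inside $D(\ddd)$, which is the assertion of the corollary.

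For completeness I would also spell the argument out directly, since it makes the degeneracy transparent. The Koszul precomplex $K(\ovl{f_1})$ is the two-term complex $\ovl{A} \xrightarrow{f_1} \ovl{A}$; by the computation of $d^2$ in \S\ref{koszul} (a sum over pairs $i<j$, which is empty when $n=1$) it is genuinely a complex, with no commutation condition needed. It consists of finite free $\ovl{A}$-modules, hence is $R$-flat, so $k \otimes^L_R K(\ovl{f_1}) = k \otimes_R K(\ovl{f_1}) = K(f_1)$. By Proposition \ref{propbok}, $K(f_1)$ compactly generates $D_Z(X) = D_{\Qch_Z(X)}(\Qch(X))$ inside $D(X)$. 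Applying Proposition \ref{compgenlift} with $\ccc = \Qch(X)$, $\ddd = \Mod(\ovl{A})$, $\sss = \Qch_Z(X)$, $\ovl{\sss} = \ovl{\Qch_Z(X)}$ and $\GGGG = \{K(\ovl{f_1})\}$ then yields that $K(\ovl{f_1})$ compactly generates $D_{\ovl{\Qch_Z(X)}}(\ddd)$ inside $D(\ddd)$.

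There is essentially no obstacle here: the only point worth noting is that the single-element case genuinely escapes the commutation constraint which, for $n \geq 2$, forces one to replace $K(f)$ by the heavier ``higher Koszul'' complex $X$ of Theorem \ref{thmlift}. Since the obstruction $d^2$ in \S\ref{koszul} is indexed by pairs $1 \le i < j \le n$ and this index set is empty for $n=1$, the complex $K(f_1)$ already lifts on the nose, and no passage to the Chevalley--Eilenberg replacement is required.
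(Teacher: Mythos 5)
Your proof is correct and matches the paper's intent exactly: the corollary is the vacuous $n=1$ case of the preceding proposition (the commutativity hypothesis holds trivially), and your expanded argument simply unwinds that proposition together with Propositions \ref{propbok} and \ref{compgenlift} in the same way the paper sets them up. No gap here.
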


We can now formulate some corollaries of Theorem \ref{thmscheme}:

\begin{proposition}
Let $X$ be a quasi-compact separated scheme with affine cover $U_i$ for $i \in I = \{1, \dots, n\}$. 
Let
$$\ovl{\ooo}_{\bullet}: \Delta \lra \Rng: J \longmapsto \ovl{\ooo}_J$$
be a pseudofunctor deforming
$$\ooo(U_{\bullet}): \Delta \lra \Rng: J \longmapsto \ooo(U_{\bullet})$$
such that all the rings $\ovl{\ooo}_J$ are commutative.
Then the category $D(\ddd)$ for $\ddd = \Des(\Mod(\ovl{\ooo}_{\bullet}))$ is compactly generated and an object in $D(\ddd)$ is compact if and only if its image in each of the categories $\Mod(\ovl{\ooo}_i)$ is compact.
\end{proposition}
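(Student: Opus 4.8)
The plan is to derive the statement directly from Theorem~\ref{thmscheme}, so it suffices to exhibit, for the given affine cover $(U_i)_{i\in I}$ and the deformation $\ddd=\Des(\Mod(\ovl{\ooo}_\bullet))$, for each $i\in I$ and $J\subseteq I\setminus\{i\}$ a collection $\GGGG^i_J$ of objects of $D^-(\ddd_i)$ with $\ddd_i=\Mod(\ovl{\ooo}_i)$ such that $k\otimes^L_R\GGGG^i_J$ compactly generates $D_{Z^i_J}(U_i)$ inside $D(U_i)$. Fix such $i$ and $J$ and write $A_i=\ooo(U_i)$, so that $U_i\cong\Spec(A_i)$ and $D(U_i)\cong D(\Mod(A_i))$. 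Since $X$ is separated, each intersection $U_{ij}=U_i\cap U_j$ is affine, hence a quasi-compact open subscheme of $U_i$, so its complement $U_i\setminus U_{ij}$ is a closed subset of $\Spec(A_i)$ of the form $V(\AAA_j)$ for a finitely generated ideal $\AAA_j\subseteq A_i$. Taking a finite generating set of each $\AAA_j$ and concatenating over the finite set $J$, we obtain a finite sequence $f=(f_1,\dots,f_m)$ in $A_i$ with
\[
Z^i_J = U_i\cap\bigcap_{j\in J}Z_j = \bigcap_{j\in J}\bigl(U_i\setminus U_{ij}\bigr) = V\Bigl(\sum\nolimits_{j\in J}\AAA_j\Bigr) = V(f)
\]
(for $J=\varnothing$ one has $Z^i_\varnothing=U_i$ and takes $f$ empty, $K(f)=A_i$).

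Next I would invoke Proposition~\ref{propbok}: the Koszul complex $K(f)$ compactly generates $D_{Z^i_J}(U_i)=D_{V(f)}(U_i)$ inside $D(U_i)$. To produce the required lift, recall that $\ovl{\ooo}_i$ is a flat $R$-deformation of $A_i$ which is moreover \emph{commutative} by hypothesis; choose lifts $\ovl{f}=(\ovl{f_1},\dots,\ovl{f_m})$ of the $f_\ell$ along the canonical surjection $\ovl{\ooo}_i\lra A_i$. By \S\ref{koszul}, commutativity of $\ovl{\ooo}_i$ guarantees that the Koszul precomplex $K(\ovl{f})$ is an honest complex of finite free $\ovl{\ooo}_i$-modules, hence an object of $D^-(\ddd_i)$; and since its terms are flat over $R$, the identity recorded in \S\ref{partwisted} gives $k\otimes^L_R K(\ovl{f})=k\otimes_R K(\ovl{f})=K(f)$. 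Thus the one-element collection $\GGGG^i_J=\{K(\ovl{f})\}$ satisfies the hypothesis of Theorem~\ref{thmscheme} for this $i$ and $J$.

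Having exhibited such collections for all $i$ and $J$, Theorem~\ref{thmscheme} applies verbatim and yields that $D(\ddd)$ is compactly generated and that an object of $D(\ddd)$ is compact precisely when its image in each $D(\ddd_i)=D(\Mod(\ovl{\ooo}_i))$ is compact, which is the assertion. The only genuinely geometric ingredient is the identification of $Z^i_J$ with the vanishing locus of a finite sequence in $A_i$; this uses separatedness of $X$ (so that $U_{ij}$ is affine) together with quasi-compactness (so that its open complement, being a finite union of basic opens, is cut out by finitely many functions), and it is the step I would expect to require the most care, although it is entirely standard. Everything else is bookkeeping on top of Theorem~\ref{thmscheme}, Proposition~\ref{propbok}, and the flatness identity $k\otimes^L_R K(\ovl{f})=K(f)$.
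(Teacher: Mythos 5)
Your proposal is correct and follows essentially the same route the paper takes: the paper presents this proposition as a direct corollary of Theorem~\ref{thmscheme}, with the required lifting hypothesis supplied by the observations in \S\ref{partwisted} that for a commutative deformed ring $\ovl{\ooo}_i$ the lifted Koszul precomplex $K(\ovl{f})$ is a genuine complex satisfying $k\otimes^L_R K(\ovl{f})=K(f)$, and then Proposition~\ref{propbok} identifies $K(f)$ as a compact generator of $D_{Z^i_J}(U_i)$. The one small remark: quasi-compactness of $U_{ij}$ (needed so that its closed complement in $U_i$ is cut out by finitely many functions) already follows from $U_{ij}$ being affine, which is where separatedness enters; this is exactly what you use, though the parenthetical attribution to quasi-compactness of $X$ is slightly redundant.
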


In particular, we recover the fact that for a smooth scheme, the components $H^2(X, \ooo_X) \oplus H^1(X, \ttt_X)$ of $HH^2(X)$ correspond to compactly generated deformations of $\Qch(X)$, a fact which also follows from \cite{toen2}.

\begin{proposition}\label{curves}
 Let $X$ be a scheme with an affine cover $U_1$, $U_2$ with $U_i \cong \mathrm{Spec}(A_i)$ such that $U_1 \cap U_2 \cong \Spec({A_1}_{x}) \cong \Spec({A_2}_y)$ for $x \in A_1$ and $y \in A_2$. Then every deformation of $\Qch(X)$ is  compactly generated.
\end{proposition}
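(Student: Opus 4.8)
The plan is to apply Theorem~\ref{thmscheme} to the two-element affine cover $\{U_1,U_2\}$ of $X$ and to exploit the fact that, for such a cover, the closed sets $Z^i_J$ occurring there are vanishing loci of a single element. First I would set up the situation of \S \ref{pardefschcover}: $X$ is quasi-compact and its overlap $U_1\cap U_2\cong\Spec((A_1)_x)$ is affine, so an arbitrary abelian deformation $\ddd$ of $\Qch(X)$ is, up to equivalence, of the form $\Des(\Mod(\ovl{\ooo}_\bullet))$ for a twisted presheaf $\ovl{\ooo}_\bullet$ on $\Delta$ deforming $\ooo(U_\bullet)$, with $\ddd_i=\Mod(\ovl{\ooo}_i)$ the induced deformation of $\Qch(U_i)\cong\Mod(A_i)$. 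By Theorem~\ref{thmscheme} it then remains, for $i=1,2$ and the unique nonempty $J\subseteq I\setminus\{i\}$, to produce a collection $\GGGG^i_J\subseteq D^{-}(\ddd_i)$ such that $k\otimes^L_R\GGGG^i_J$ compactly generates $D_{Z^i_J}(U_i)$ inside $D(U_i)$.

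Next I would identify these supports. Since $U_1\cap U_2\cong\Spec((A_1)_x)$ is the principal open $D(x)\subseteq U_1=\Spec(A_1)$, we have $Z^1_{\{2\}}=U_1\setminus U_2=V(x)$, and symmetrically $Z^2_{\{1\}}=U_2\setminus U_1=V(y)\subseteq U_2$. By Proposition~\ref{propbok}, the subcategory $D_{V(x)}(U_1)$ is compactly generated inside $D(U_1)$ by the Koszul complex $K(x)$, and $D_{V(y)}(U_2)$ inside $D(U_2)$ by $K(y)$.

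Finally I would lift. Choosing a lift $\ovl{x}\in\ovl{\ooo}_1$ of $x$ along $\ovl{\ooo}_1\to A_1$ and forming the Koszul precomplex $K(\ovl{x})$ of \S \ref{koszul} (which is a genuine complex, since it involves a single element and hence carries no commutativity obstruction), we obtain a bounded complex of finite free $\ovl{\ooo}_1$-modules, i.e.\ an object of $D^{-}(\ddd_1)$, with $k\otimes^L_R K(\ovl{x})=k\otimes_R K(\ovl{x})=K(x)$. Thus $\GGGG^1_{\{2\}}=\{K(\ovl{x})\}$ meets the requirement for $i=1$, and, for a lift $\ovl{y}$ of $y$, $\GGGG^2_{\{1\}}=\{K(\ovl{y})\}$ meets it for $i=2$ --- this is exactly Corollary~\ref{corsing} read off on each affine piece. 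Theorem~\ref{thmscheme} then applies and gives that $D(\ddd)$ is compactly generated (with its usual compactness criterion).

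I do not expect a serious obstacle here: the whole point is that a two-element cover whose overlap is a \emph{principal} open forces each support $Z^i_J$ to be cut out by a single equation, so that the lifting problem degenerates to that of a length-one Koszul complex --- which lifts unconditionally, whether or not the deformed rings $\ovl{\ooo}_i$ are commutative. The only thing to be careful about is the bookkeeping matching the two-element cover to the hypotheses of Theorem~\ref{thmscheme}; for covers of size $\ge 3$, or overlaps that are not principal, this degeneration fails and one is forced back onto the ``higher Koszul complexes'' of Theorem~\ref{thmlift}.
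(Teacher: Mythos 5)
Your proposal is correct and takes essentially the same approach the paper intends. The paper states Proposition~\ref{curves} as a corollary of Theorem~\ref{thmscheme} without a written proof; its implicit argument is exactly yours: the principal-open overlap forces each $Z^i_J$ to be $V(x)$ resp.\ $V(y)$, so the relevant generators are length-one Koszul complexes, which lift unconditionally (Corollary~\ref{corsing}), and Theorem~\ref{thmscheme} then applies.
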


Unfortunately, Proposition \ref{curves} typically applies to curves, and they tend to have no genuinely non-commutative deformations. For instance, for a smooth curve $X$ the Hochschild cohomology is seen to reduce to $HH^2(X) = H^1(X, \ttt_X)$ for dimensional reasons, whence there are only scheme deformations of $X$.

\subsection{Non-commutative deformed schemes}\label{parmaintheorem}

Let $k$ be a field and $(R,m)$ a finite dimensional $k$ algebra with $m^d = 0$ and $R/m = k$.
In this section we prove our main result, namely that non-commutative deformations of quasi-compact separated schemes are compactly generated. Based upon \S \ref{parliftkoszul}, we remedy the fact that for general non-commutative deformations of schemes, the relevant Koszul precomplexes fail to be complexes and hence cannot be used as lifts, unlike in the special cases discussed in \S \ref{partwisted}.

\begin{theorem}\label{maintheorem}
Let $X$ be a quasi-compact separated $k$-scheme with an affine cover $U_i$ for $i \in I = \{1, \dots, n\}$. Let $\iota: \Qch(X) \lra \ddd$ be an abelian $R$-deformation with induced deformations $\ddd_i$ of $\Qch(U_i)$. Then $D(\ddd)$ is compactly generated and an object in $D(\ddd)$ is compact if and only if its image in each $D(\ddd_i)$ is compact.
\end{theorem}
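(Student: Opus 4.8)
The plan is to apply Theorem~\ref{thmscheme} to the given affine cover $U_i$. Thus, fixing $i \in I$ and $\varnothing \neq J \subseteq I \setminus \{i\}$, I must produce a collection $\GGGG^i_J$ of objects of $D^-(\ddd_i)$ such that $k \otimes^L_R \GGGG^i_J$ compactly generates $D_{Z^i_J}(U_i)$ inside $D(U_i)$, where $Z^i_J = U_i \cap \bigcap_{j \in J} Z_j$. Since $X$ is separated, each $U_i \cap U_j$ is affine, and since $U_i$ is affine, the closed subset $Z^i_J$ of the affine scheme $U_i$ is cut out by finitely many global functions: writing $U_i = \Spec(A)$, for each $j \in J$ choose finitely many $g_{j,1},\dots,g_{j,m_j} \in A$ with $U_i \cap U_j = \{p : \text{some } g_{j,\ell} \notin p\}$, so that $U_i \setminus (U_i \cap U_j) = V(g_{j,1},\dots,g_{j,m_j})$; concatenating all these for $j \in J$ gives a finite sequence $f = (f_1,\dots,f_N)$ in $A$ with $V(f) = Z^i_J$. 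By Proposition~\ref{propbok}, $D_{Z^i_J}(U_i)$ is compactly generated inside $D(U_i)$ by the single Koszul complex $K(f)$.

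The issue, exactly as flagged in \S\ref{parliftkoszul}, is that the naive lift $K(\ovl{f})$ of $K(f)$ to the deformed ring $\ovl{\ooo}_i =: A'$ (an $R$-deformation of $A$, so $A'/mA' = A$) need not be a complex when the lifted elements $f'_i$ fail to commute. Here is where Theorem~\ref{thmlift} enters: applied to the commutative $k$-algebra $A$, the sequence $f$, and the fact that $m^d = 0$, it produces a \emph{single} perfect complex $X = A \otimes^L_U {}_Uk \in D(A)$ (with $U$ the enveloping algebra of the free $d$-nilpotent Lie algebra $\NNN$ on $n = N$ generators) satisfying $\langle K(f)\rangle_A = \langle X\rangle_A$, hence in particular $\ovl{\langle K(f)\rangle}_A = \ovl{\langle X\rangle}_A$, and such that there is a perfect complex $X' = A' \otimes^L_U {}_Uk \in D(A')$ with $A \otimes^L_{A'} X' = X$. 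Since generating the same thick subcategory forces generating the same localizing subcategory, $X$ compactly generates $D_{Z^i_J}(U_i)$ inside $D(U_i)$ just as $K(f)$ does. Thus taking $\GGGG^i_J = \{X'\}$, we have $k \otimes^L_R X' = k \otimes^L_R (A' \otimes^L_U {}_Uk) = A \otimes^L_{A'} X' = X$ (using that $A = k \otimes_R A'$, so $k \otimes^L_R - $ on an $A'$-module complex agrees with $A \otimes^L_{A'} -$), which compactly generates $D_{Z^i_J}(U_i)$ as required.

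Having verified the hypothesis of Theorem~\ref{thmscheme} for every $i$ and every $\varnothing \neq J \subseteq I \setminus \{i\}$, that theorem immediately yields that $D(\ddd)$ is compactly generated and that an object of $D(\ddd)$ is compact iff its image in each $D(\ddd_i)$ is compact, which is the assertion. The one point that requires care—and which I expect to be the main obstacle in writing this cleanly—is the bookkeeping identifying $k \otimes^L_R X'$ with $A \otimes^L_{A'} X'$: one must know that the relevant deformations $\ddd_i$ are of the form $\Mod(\ovl{\ooo}_i)$ for an $R$-deformation $\ovl{\ooo}_i$ of $\ooo(U_i)$ (this is the content of the description of deformed schemes via twisted presheaves from \cite{lowen4}, recalled in \S\ref{pardefschcover}), and that under the resulting identification $D(\ddd_i) \cong D(\ovl{\ooo}_i)$ the functor $k \otimes^L_R -$ corresponds to restriction along $A' = \ovl{\ooo}_i \twoheadrightarrow A = \ooo(U_i)$ composed with $A \otimes^L_{A'}-$; this is standard but must be invoked. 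Everything else is a direct citation: Proposition~\ref{propbok} for the undeformed generator, Theorem~\ref{thmlift} for the liftable replacement, and Theorem~\ref{thmscheme} for the gluing.
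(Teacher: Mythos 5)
Your proposal is correct and follows essentially the same route as the paper's own proof: reduce via Theorem~\ref{thmscheme} to compact generation of $D_{Z^i_J}(U_i)$ by lifts, identify $Z^i_J$ as $V(f)$ for a finite sequence $f$ in $\ooo(U_i)$, invoke Proposition~\ref{propbok} for the undeformed Koszul generator, and replace it by the liftable complex of Theorem~\ref{thmlift}. You spell out two points the paper leaves implicit (the construction of $f$ from quasi-compactness of the intersections, and the identification of $k\otimes^L_R-$ with $A\otimes^L_{\ovl A}-$), but the argument is the same.
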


\begin{proof}
For $i \in I$ and $J \subseteq I \setminus \{i\}$, put $Y = U_i = \Spec(A)$ and $Z = U_i \cap \cap_{j \in J} Z_j$. For a finite sequence of elements $f = (f_1, \dots, f_k)$ we can write
$$Z = V(f) = \{ p \in \Spec(A) \,\, |\,\, f_1, \dots. f_k \in p \} \subseteq Y.$$
For the induced deformation $\ddd_i$ of $\Qch(U_i) \cong \Mod(A)$ we have $\ddd_i \cong \Mod(\ovl{A})$ for an $R$-deformation $\ovl{A}$ of $A$. 
By Proposition \ref{propbok}, the category $D_Z(Y)$ is compactly generated by $K(f)$ inside $D(Y)$. Now by Theorem \ref{thmlift}, there exists a perfect complex $X' \in D(\ovl{A})$ for which $A \otimes^L_{\ovl{A}} X' = k \otimes^L_R X'$ compactly generates $D_X(Y)$ inside $D(Y) \cong D(A)$ as desired.
\end{proof}

The theorem shows in particular that the entire second Hochschild cohomology is realized by means of compactly generated abelian deformations.

\section{Appendix: Removing obstructions}\label{parremobs}

In this appendix we discuss an approach to removing obstuctions to first order deformations from \cite{kellerlowen} based upon the Hochschild complex, which applies in the case of length two Koszul complexes and thus leads to an alternative proof of Theorem \ref{maintheorem} in the case of first order deformations of surfaces. We compare the explicit lifts we obtain in both approaches. 

\subsection{Hochschild complex}

Let $\PPP$ be a $k$-linear abelian category. Recall that the Hochschild complex $\CC(\PPP)$ is the complex of $k$ modules with, for $n \geq 0$,
$$\CC^n(\PPP) = \prod_{P_0, \dots, P_n \in \ccc} \Hom_k(\PPP(P_{n-1}, P_n) \otimes \dots \otimes \PPP(P_0, P_1), \PPP(P_0, P_n))$$
endowed with the familiar Hochschild differential.
Let $C(\PPP)$ be the dg category of complexes of $\PPP$-objects with Hochschild complex $\CC(C(\PPP))$ with
$$\CC^n(C(\PPP)) = \prod_{C_0, \dots, C_n \in C(\PPP)} \Hom_k(\Hom(C_{n-1}, C_n) \otimes \dots \otimes \Hom(C_0, C_1), \Hom(C_0, C_n)).$$
An element of $C^n(\PPP)$ can be naively extended to $C(\PPP)$, yielding
$$\CC^n(\PPP) \lra \CC^n(C(\PPP)): \phi \lra \phi.$$

\subsection{Linear deformations and lifts of complexes}

If $m$ is the compositon of the category $\PPP$, then a Hochschild $2$-cocycle $\phi$ corresponds to the first order deformation
$$(\ovl{\PPP} = \PPP[\epsilon], \ovl{m} = m + \phi \epsilon).$$
Here $\Ob(\ovl{\PPP}) \cong \Ob(\PPP)$ and we denote objects in $\ovl{\PPP}$ by $\ovl{P}$ for $P \in \PPP$.
For objects $P_0, P_1 \in \PPP$, we have $\ovl{\PPP}(\ovl{P_1}, \ovl{P_0}) = \PPP(P_1, P_0)[\epsilon]$. A morphism $f: P_1 \lra P_0$ in $\PPP$ naturally gives rise to a morphism $\ovl{f} = f + 0\epsilon: \ovl{P_1} \lra \ovl{P_0}$ in $\ovl{\PPP}(\ovl{P_1}, \ovl{P_0})$, the trivial lift.
For a complex $(P, d)$ of $\PPP$-objects, there thus arises a natural lifted precomplex $(\ovl{P}, \ovl{d})$ of $\ovl{\PPP}$-objects.

In $\ovl{\PPP}$ we have
$$\ovl{m}(\ovl{d}, \ovl{d}) = \phi(d,d) \epsilon$$
and in fact, $$[\phi(d,d)] \in K(\PPP)(P[-2],P)$$
is precisely the obstruction to the existence of a \emph{complex} $(\ovl{P}, \ovl{d}')$ in $K(\ovl{\PPP})$ with $k \otimes_R (\ovl{P}, \ovl{d}') \cong (P,d)$ in $K(\PPP)$ (see \cite{lowen2}).
In general this obstruction will not vanish, but in some cases it is seen to vanish on the nose.

\begin{proposition}
If the differential $d$ of $P$ has no two consecutive non-zero components $d_n: P_n \lra P_{n-1}$, then $\phi(d,d) = 0$ and $(\ovl{P}, \ovl{d})$ is a complex lifting $(P,d)$.
\end{proposition}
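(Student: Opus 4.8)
The plan is to unwind the definition of the Hochschild $2$-cochain $\phi$ applied to the pair $(d,d)$, where $d = (d_n)_n$ is the differential of the complex $(P,d)$ viewed as a single degree $1$ morphism of the complex object in $C(\PPP)$. The quantity $\phi(d,d)$ is, by the extension map $\CC^2(\PPP) \lra \CC^2(C(\PPP))$, assembled out of the values $\phi(d_{n-1}, d_n) \in \PPP(P_n, P_{n-2})$ for each $n$, placed in the appropriate matrix slots of $\Hom(P,P[-2])$. So the obstruction class $[\phi(d,d)]$ has, as its only possibly nonzero entries, the compositions $\phi(d_{n-1} \circ d_n)$ measured along the two-step maps $P_n \lra P_{n-2}$.

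First I would record precisely which data enter $\phi(d,d)$: because $\phi$ is bilinear and the $\phi$-contribution in $\CC^2(C(\PPP))$ of a composable pair of morphisms $C_0 \to C_1 \to C_2$ is computed componentwise, the only terms of $\phi(d,d)$ are indexed by consecutive pairs $(d_n, d_{n-1})$ with source $P_n$ and target $P_{n-2}$. Then I would invoke the hypothesis: by assumption there is no index $n$ with both $d_n \neq 0$ and $d_{n-1} \neq 0$, i.e.\ for every $n$ at least one of $d_n$, $d_{n-1}$ vanishes. Since $\phi$ is $k$-bilinear, $\phi(0, d_{n-1}) = 0 = \phi(d_n, 0)$, so every term of $\phi(d,d)$ vanishes identically, hence $\phi(d,d) = 0$ as an element of $\CC^2(C(\PPP))$, a fortiori as the morphism in $K(\PPP)(P[-2],P)$ representing the obstruction.

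Finally I would conclude that since $\ovl{m}(\ovl d, \ovl d) = \phi(d,d)\epsilon = 0$ in $\ovl\PPP$, the naturally lifted precomplex $(\ovl P, \ovl d)$ already satisfies $\ovl d \circ \ovl d = 0$, hence is a genuine complex in $K(\ovl\PPP)$, and it reduces modulo $\epsilon$ to $(P,d)$ by construction; so it is the desired lift. I do not expect any serious obstacle here: the entire content is bookkeeping of which matrix entries of the extended cochain are hit, together with bilinearity of $\phi$. The only mild subtlety worth stating carefully is that the ``naive extension'' $\CC^2(\PPP) \to \CC^2(C(\PPP))$ genuinely does produce only these consecutive-pair entries (no cross terms between non-adjacent components of $d$), which follows because a degree $1$ endomorphism of the complex object $P$ has only the adjacent components $d_n$ as nonzero matrix entries and $\phi$ is evaluated on honest composable pairs of such entries.
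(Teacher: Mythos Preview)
Your proposal is correct. The paper actually states this proposition without proof, treating it as self-evident from the preceding discussion that $\ovl{m}(\ovl{d},\ovl{d}) = \phi(d,d)\epsilon$; your argument is precisely the natural unpacking of why $\phi(d,d)$ vanishes componentwise under the hypothesis, via $k$-bilinearity of the Hochschild cochain and the fact that the naive extension to $C(\PPP)$ only produces the entries $\phi(d_{n-1},d_n)$.
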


\subsection{Removing obstructions}\label{par2remobs}
If $0 \neq [\phi(d,d)] \in \Ext^2_{\PPP}(P,P)$, following \cite{kellerlowen} we consider the morphism $\phi(d,d): \Sigma^{-2} P \lra P$ and we turn to the related complex
$$P^{(1)} = \cone(\phi(d,d)) = P \oplus \Sigma^{-1} P$$
with differential
$$d^{(1)} = \begin{pmatrix} d & \phi(d,d) \\ 0 & -d \end{pmatrix}.$$
The obstruction associated to the complex $P^{(1)}$ is then given by
$$\phi^{(1)} = \phi(d^{(1)}, d^{(1)}) = \begin{pmatrix} \phi(d,d) & \phi(d, \phi(d,d)) - \phi(\phi(d,d), d) \\
0 & \phi(d,d) \end{pmatrix}.$$
According to \cite[Lemma 3.18]{kellerlowen}, the degree two morphism
$$\begin{pmatrix} \phi(d,d) & 0\\
0 & \phi(d,d) \end{pmatrix}: P^{(1)} \lra P^{(1)}$$
is nullhomotopic (a nullhomotopy is given by $\small{\begin{pmatrix} 0 & 0 \\ 1 & 0 \end{pmatrix}}$).
 
Put $\psi^{(1)} = \phi(d, \phi(d,d)) - \phi(\phi(d,d), d)$. It follows that the obstruction associated to $P^{(1)}$ is given by
$$\tilde{\phi}^{(1)} = \begin{pmatrix} 0 & \psi^{(1)} \\ 0 & 0 \end{pmatrix} \in K(\PPP)(P^{(1)}[-2], P^{(1)}).$$

In general there is no reason why $\tilde{\phi}^{(1)}$ should be nullhomotopic, but in some cases it can be seen to be zero on the nose.

\begin{proposition}\label{propconelift}
Suppose the differential $d$ of $P$ has no three consecutive non-zero components $d_n: P_n \lra P_{n-1}$. Then $\psi^{(1)} = 0$ and there is a complex $\ovl{P^{(1)}} \in K(\ovl{\PPP})$ with $k \otimes_R \ovl{P^{(1)}} \cong P^{(1)} \in K(\PPP)$.
\end{proposition}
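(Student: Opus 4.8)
The plan is to imitate the computation that produced $\tilde\phi^{(1)}$ in \S\ref{par2remobs}, exploiting the hypothesis on $d$ to kill the relevant terms on the nose. First I would observe that $\psi^{(1)} = \phi(d, \phi(d,d)) - \phi(\phi(d,d), d)$ is built from compositions of the cocycle $\phi$ applied to pairs of composable morphisms, where one of the two arguments is always $\phi(d,d)$. The key point is that $\phi(d,d)$, viewed componentwise, is supported on those degrees $n$ where $d_{n}$ and $d_{n-1}$ are both nonzero (so that $d_{n-1}d_n$ can be a nonzero composable pair passing through the middle), i.e. on the ``two consecutive nonzero components'' pattern. Consequently $\phi(d, \phi(d,d))$ and $\phi(\phi(d,d), d)$ are supported on degrees where \emph{three} consecutive components of $d$ are nonzero. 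Under the hypothesis that $d$ has no three consecutive nonzero components $d_n \colon P_n \to P_{n-1}$, every such term vanishes, hence $\psi^{(1)} = 0$.

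Concretely, I would spell this out by recalling that $\phi \in \CC^2(\PPP)$ is a function on pairs of composable morphisms, and that when we evaluate $\phi$ on a pair $(f,g)$ with $f \colon P \to Q$, $g \colon Q \to S$, the result lives in $\PPP(P,S)$; for the differential $d$ of the complex $P$ this forces the relevant indices to line up. Writing out $\phi(d,d)$ in degree $n$ as (a sum of) $\phi(d_{n-1}, d_n)$, and then $\phi(d,\phi(d,d))$ in degree $n$ as (a sum of) terms $\phi(d_{n-2}, \phi(d_{n-1},d_n))$ and similarly $\phi(\phi(d,d),d)$ involving $\phi(\phi(d_{n-1},d_n), d_{n+1})$ (indices shifted appropriately for the grading of $\Sigma^{-2}P$), one sees that each nonzero summand requires three of the maps $d_{n+1}, d_n, d_{n-1}, d_{n-2}$ adjacent in the complex to be simultaneously nonzero. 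By hypothesis this never happens, so $\psi^{(1)} = 0$, and therefore $\tilde\phi^{(1)} = \left(\begin{smallmatrix} 0 & 0 \\ 0 & 0 \end{smallmatrix}\right) = 0$ in $K(\PPP)(P^{(1)}[-2], P^{(1)})$.

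Once $\tilde\phi^{(1)} = 0$ on the nose (not merely up to homotopy), the obstruction to lifting $P^{(1)}$ to a genuine complex over $\ovl\PPP$ vanishes exactly as in the $\psi = 0$ case discussed before Proposition \ref{propconelift}: the trivial lift $\ovl{d^{(1)}}$ of $d^{(1)}$ already satisfies $\ovl m(\ovl{d^{(1)}}, \ovl{d^{(1)}}) = \tilde\phi^{(1)} \epsilon = 0$ after the nullhomotopic piece $\left(\begin{smallmatrix} \phi(d,d) & 0 \\ 0 & \phi(d,d)\end{smallmatrix}\right)$ has been absorbed; more precisely one modifies $\ovl{d^{(1)}}$ by the $\epsilon$-multiple of the explicit nullhomotopy $\left(\begin{smallmatrix} 0 & 0 \\ 1 & 0 \end{smallmatrix}\right)$ to obtain $\ovl{d^{(1)}}'$ with $(\ovl{d^{(1)}}')^2 = 0$, giving a complex $\ovl{P^{(1)}} \in K(\ovl\PPP)$ with $k \otimes_R \ovl{P^{(1)}} \cong P^{(1)}$. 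The main obstacle is purely bookkeeping: tracking the degree shifts in $P^{(1)} = P \oplus \Sigma^{-1}P$ and in $\phi(d,d) \colon \Sigma^{-2}P \to P$ carefully enough to be sure that ``two consecutive'' in the first obstruction really does become ``three consecutive'' in $\psi^{(1)}$, and that the off-diagonal placement of $\psi^{(1)}$ in $\tilde\phi^{(1)}$ does not reintroduce a surviving term; no genuine homological input beyond \S\ref{par2remobs} is needed.
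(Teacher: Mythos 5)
The paper states Proposition \ref{propconelift} without proof, leaving the argument implicit in the preceding setup of \S\ref{par2remobs}. Your reasoning reconstructs exactly what that setup demands, and it is correct: the componentwise vanishing $\phi(d,d)_n = \phi(d_{n-1},d_n) = 0$ unless $d_{n-1}$ and $d_n$ are both nonzero, hence each summand of $\psi^{(1)} = \phi(d,\phi(d,d)) - \phi(\phi(d,d),d)$ requires three consecutive nonzero components of $d$, which the hypothesis forbids; once $\psi^{(1)}=0$ you have $\tilde\phi^{(1)}=0$ on the nose, and the modified lift $\ovl{d^{(1)}} - \bigl(\begin{smallmatrix} 0 & 0 \\ 1 & 0 \end{smallmatrix}\bigr)\epsilon$ squares to zero and reduces to $P^{(1)}$. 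One small imprecision in wording: the sentence saying the trivial lift ``already satisfies $\ovl m(\ovl{d^{(1)}},\ovl{d^{(1)}}) = \tilde\phi^{(1)}\epsilon = 0$'' is not literally true --- the trivial lift squares to $\phi^{(1)}\epsilon = \bigl(\begin{smallmatrix} \phi(d,d) & 0 \\ 0 & \phi(d,d) \end{smallmatrix}\bigr)\epsilon$, which is nullhomotopic but not zero --- but your ``more precisely'' clause immediately gives the correct statement via the explicit homotopy, so this is a presentational blemish rather than a gap. Your index bookkeeping has a minor inconsistency between the two displayed terms (they are written at different degrees), but as you acknowledge this is cosmetic and the ``three consecutive'' conclusion is unaffected.
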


Following \cite[Proposition 3.16]{kellerlowen}, we note that the original complex $P$ can sometimes be reconstructed from $P^{(1)}$.

\begin{proposition} \cite{kellerlowen} \label{propconegen}
If $\phi(d,d)$ is nilpotent, $P$ can be constructed from $P^{(1)}$ using cones, shifts and direct summands. This applies in particular if $d$ has no $m$ consecutive non-zero components $d_n: P_n \lra P_{n-1}$ for some $m \geq 1$.
\end{proposition}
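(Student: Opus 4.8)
\emph{Proof plan.} I would carry out the argument formally inside the triangulated category $K(\PPP)$ (everything below works in any triangulated category once $\phi(d,d)$ is realized as a genuine morphism). The first step is to make $\phi(d,d)$ honest: a short computation with the Hochschild cocycle identity $\delta\phi = 0$, evaluated on the triple $(d,d,d)$ and using $d^2 = 0$, shows that $g := \phi(d,d)$ commutes with $d$, hence is a chain self-map of $P$ of internal degree $-2$, i.e.\ a morphism $g\colon \Sigma^{-2}P \lra P$ in $K(\PPP)$ whose cone is exactly $P^{(1)}$.

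Next I would build a telescope out of the iterates of $g$. Write $g^{(i)}\colon \Sigma^{-2i}P \lra P$ for the $i$-fold composite, normalized so that $g^{(1)} = g$ and $g^{(i)} = g\circ\Sigma^{-2}g^{(i-1)}$, and let $\langle P^{(1)}\rangle$ denote the thick subcategory generated by $P^{(1)}$. I claim $\cone(g^{(i)}) \in \langle P^{(1)}\rangle$ for every $i \geq 1$, by induction on $i$; the base case $\cone(g^{(1)}) = P^{(1)}$ is trivial. For the step, apply the octahedral axiom to the composable pair $\Sigma^{-2i}P \xrightarrow{\Sigma^{-2}g^{(i-1)}} \Sigma^{-2}P \xrightarrow{g} P$, which yields a distinguished triangle
\[
\Sigma^{-2}\cone(g^{(i-1)}) \lra \cone(g^{(i)}) \lra \cone(g) \lra \Sigma\bigl(\Sigma^{-2}\cone(g^{(i-1)})\bigr)
\]
(using that cones commute with shifts, so $\cone(\Sigma^{-2}g^{(i-1)}) = \Sigma^{-2}\cone(g^{(i-1)})$). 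The first term lies in $\langle P^{(1)}\rangle$ by the induction hypothesis and closure under shifts, the third term is $P^{(1)}$ itself, and $\langle P^{(1)}\rangle$ is triangulated, so the middle term lies in $\langle P^{(1)}\rangle$ as well.

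Then I would invoke nilpotency: choose $N$ with $g^{(N)} = 0$ in $K(\PPP)$. Then $\cone(g^{(N)}) \cong P \oplus \Sigma^{1-2N}P$, which lies in $\langle P^{(1)}\rangle$ by the claim; since thick subcategories are closed under direct summands, $P \in \langle P^{(1)}\rangle$, which is the first assertion. For the \emph{in particular} clause, the point is that the degree-$n$ component of $g^{(i)}$ is a composite of the bilinear expressions $\phi(d_{j-1},d_j)$ with $j$ running over the $2i$ consecutive integers $n, n-1, \dots, n-2i+1$, hence vanishes as soon as one of these $d_j$ is zero. So if $d$ has no $m$ consecutive non-zero components, then $g^{(i)} = 0$ on the nose whenever $2i \geq m$; in particular $\phi(d,d)$ is nilpotent and the first part applies. (For $m \leq 2$ one already has $\phi(d,d) = 0$ and $P^{(1)} = P \oplus \Sigma^{-1}P$, recovering the earlier observation directly.)

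I expect the only real difficulty to be bookkeeping rather than ideas: one must be careful with the shift/sign conventions when presenting $\phi(d,d)$ as a chain map and when identifying $\cone(\Sigma^{-2}g^{(i-1)})$ with $\Sigma^{-2}\cone(g^{(i-1)})$, and one must pin down precisely which components $d_j$ the iterate $g^{(i)}$ depends on in order to obtain the clean threshold $2i \geq m$. This argument also matches the one in \cite{kellerlowen}.
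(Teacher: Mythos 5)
Your argument is correct and is essentially the argument the paper is alluding to: the paper's proof of this proposition is a one-line citation ("This follows from the octahedral axiom, see \cite[Proposition 3.16]{kellerlowen}"), and your telescope of cones $\cone(g^{(i)})$ built via repeated use of the octahedron, ending with $\cone(g^{(N)})\cong P\oplus\Sigma^{1-2N}P$ and closure of the thick subcategory under direct summands, is exactly the content of that reference. One tiny bookkeeping slip in the last paragraph: the degree-$n$ component of $g^{(i)}$ is a composite of the $i$ factors $\phi(d_{n-1},d_n),\phi(d_{n-3},d_{n-2}),\dots,\phi(d_{n-2i+1},d_{n-2i+2})$, so the subscript $j$ in $\phi(d_{j-1},d_j)$ runs over the $i$ values $n,n-2,\dots,n-2i+2$, not over $2i$ values; what is true (and what you use) is that the differentials $d_j$ appearing are precisely those with $j$ in the $2i$ consecutive integers $n-2i+1,\dots,n$, so the vanishing threshold $2i\geq m$ is correct.
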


\begin{proof}
This follows from the octahedral axiom (see \cite[Proposition 3.16]{kellerlowen}).
\end{proof}

\subsection{The case of Koszul complexes}
The approach to removing obstructions discussed in \S \ref{par2remobs} applies to the case of length two Koszul complexes. In this section we compare this approach with the solution from \S \ref{parliftkoszul}.
Let $A$ be a commutative $k$-algebra with a first order deformation $\bar{A}$ determined by a Hochschild 2 cocycle $\phi \in \Hom_k(A \otimes_k A, A)$. For a sequence $f = (f_1, f_2)$ of elements in $A$ we consider the Koszul complex $(K(f), d)$ which is given by
$$\xymatrix{0 \ar[r] & A \ar[r]_-{\small{\begin{pmatrix} f_1 \\ -f_2 \end{pmatrix}}} & {A^2} \ar[r]_{\small{\begin{pmatrix} f_2 & f_1 \end{pmatrix}}} & A \ar[r] & 0}$$
In \S \ref{parremobs}, we take $\PPP$ to be the category of finite free $A$-modules.
The obstruction $\phi(d,d): \Sigma^{-2}K(f) \lra K(f)$ is determined by the element
$$\alpha = \phi(f_1, f_2) - \phi(f_2, f_1) \in A.$$
The complex $K(f)^{(1)} = \cone(\phi(d,d))$ is given by
$$\xymatrix{0 \ar[r] & A \ar[r]_{d^{(1)}_3} & {A^3} \ar[r]_{d^{(1)}_2} & {A^3} \ar[r]_{d^{(1)}_1} & A \ar[r] & 0}$$
with differentials given by
$$d^{(1)}_3 = \begin{pmatrix} 0 \\ -f_1 \\ f_2 \end{pmatrix}, \hspace{0,5cm} d^{(1)}_2 = \begin{pmatrix} -f_2 & 0 & 0 \\
f_1 & 0 & 0 \\ \ \alpha & f_2 &  f_1 \end{pmatrix}, \hspace{0,5cm} d^{(1)}_1 = \begin{pmatrix} - f_1 & - f_2 & 0 \end{pmatrix}.$$
Apart from the signs, the main difference with the complex $X$ from \S \ref{parliftkoszul} lies in the fact that here $\alpha$ depends on the Hochschild cocycle, whereas in $X$ it is replaced by the constant value $1$.
The nulhomotopy $\partial$ for the obstruction $\phi^{(1)}$ gives rise to the lifted complex $K(f)^{(1)}[\epsilon]$ with differential $d - \partial \epsilon$. Concretely, the differential is given by
$$\bar{d}^{(1)}_3 = \begin{pmatrix} - \epsilon \\ -f_1 \\ f_2 \end{pmatrix}, \hspace{0,5cm} \bar{d}^{(1)}_2 = \begin{pmatrix} -f_2 & 0 & - \epsilon \\
f_1 & - \epsilon & 0 \\ \ \alpha & f_2 &  f_1 \end{pmatrix}, \hspace{0,5cm} \bar{d}^{(1)}_1 = \begin{pmatrix} - f_1 & - f_2 & - \epsilon \end{pmatrix}.$$

On the other hand, if for $X'$ we choose $f'_i = f_i + 0 \epsilon$, then we have $[f'_1, f'_2] = \alpha \epsilon$ and hence $X'$ has differential $d'$ given by
$$d'_3 = \begin{pmatrix} \alpha \epsilon \\ f_1 \\ f_2 \end{pmatrix}, \hspace{0,5cm} d'_2 = \begin{pmatrix} -f_2 & 0 & \alpha \epsilon \\
f_1 & -\alpha \epsilon & 0 \\ -1 & f_2 & - f_1 \end{pmatrix}, \hspace{0,5cm} d'_1 = \begin{pmatrix} f_1 & f_2 & \alpha \epsilon \end{pmatrix}.$$
Clearly, the computations leading to $(\bar{d}^{(1)})^2 = 0$ and to ${d'}^2 = 0$ are almost identical and have the definition of $\alpha$ as main ingredient.

\def\cprime{$'$} \def\cprime{$'$}
\providecommand{\bysame}{\leavevmode\hbox to3em{\hrulefill}\thinspace}
\providecommand{\MR}{\relax\ifhmode\unskip\space\fi MR }
\providecommand{\MRhref}[2]{%
  \href{http://www.ams.org/mathscinet-getitem?mr=#1}{#2}
}
\providecommand{\href}[2]{#2}

\end{document}